%
%

\documentclass{siamart190516}

\usepackage{amssymb,amsfonts,amsopn}
\usepackage{graphicx,url,xcolor}

\usepackage{cleveref}

\usepackage{tikz}
\usetikzlibrary{shapes.misc}


\bibliographystyle{siamplain}

\newtheorem*{openprob}{Open Problem}

\newtheorem{assumption}{Assumption}

\newtheorem{defn}{Definition}

\DeclareMathOperator{\spec}{\text{Spec}}

\DeclareMathOperator{\R}{\mathbb{R}}

\newcommand{\pmat}[1]{\begin{pmatrix} #1 \end{pmatrix}}

\title{A Graph Spectral Flow for Computing Nodal Deficiencies}

\author{Wesley Hamilton\thanks{Department of Mathematics, University of North Carolina at Chapel Hill, Chapel Hill, NC (\email{wham@live.unc.edu}).}}

\headers{A Graph Spectral Flow}{Wesley Hamilton}

\begin{document}
	
	\maketitle
	
	\begin{abstract}
		In this paper we propose a spectral flow for graph Laplacians, and prove that it counts the number of nodal domains for a given Laplace eigenvector. This extends work done for Laplacians on $\mathbb{R}^n$ to the graph setting. We mention some open problems relating the topology of a graph to the analytic behaviour of its Laplace eigenvectors, and include numerical examples illustrating our flow.
	\end{abstract}
	
	\begin{keywords}
		graph Laplacians, spectral theory, nodal deficiency, nodal domains
	\end{keywords}
	
	\begin{AMS}
		05C50
	\end{AMS}

	\section{Introduction}
	\label{section:intro}
	
	The goal of this paper is to show that if $\psi$ is an eigenvector of the graph Laplacian $L$ then the number of nodal domains of $\psi$ can be counted via a family of perturbed graph Laplacians, the ideas of which we outline below. This provides a direct graph analogue of the continuum version in which Laplace eigenfunctions are considered, and provides an alternative proof of the nodal domain counts obtained by Berkolaiko \cite{Be:NodalMag} and Colin de Verdiere \cite{dV:MagPaper} using magnetic flux methods. 
	
	Given a connected, weighted graph $G = (V,E,w)$ with adjacency matrix $A = (w_{ij})_{i,j \in V}$, we define the graph Laplacian as $L = D - A$ where $D = (\sum_{(i,j) \in E} w_{ij})_{ii}$. The edge weights are non-negative, and zero edge weights are understood to mean the absence of an edge. For an eigenvalue/eigenvector pair $(\lambda, \psi)$ of $L$, we define the nodal domains of $\psi$ to be the maximally connected subgraphs induced by the vertex set $\{i \colon \psi_i \psi_j > 0 \text{ for some } j\in V\}$; these are often called strong nodal domains in the spectral graph theory literature. We denote the number of nodal domains of $\psi$ by $\nu(\psi)$. We show that $\nu(\psi)$ can be computed by constructing a real-parameter family of bilinear forms $B_\sigma$ on $G$ using the eigenvector $\psi$, and then considering the spectrum of $B_\sigma$ as $\sigma$ increases from $0$ to a given limit point. The number of eigenvalues of $B_\sigma$ that do not cross $\lambda$ as $\sigma$ increases is exactly the number of nodal domains.
	
	In this paper we give two constructions of $B_\sigma$. The first (\Cref{section:edge-based_flow}) is simpler to describe and is defined on the original graph, but introduces exotic new edge weights for effective Dirichlet boundary conditions on said graph. The second (\Cref{section:vertex-based_flow}) is more involved and results in a graph with extra vertices and similar exotic edge weights, but gives the motivation for the simpler construction and suggests some interesting open problems relating the zeros of an eigenvector to its nodal domains. We describe both constructions, but formulate our main result in terms of the simpler version:
	

	\begin{theorem}
		\label{theorem:main_theorem}
		Suppose $(\lambda_k, \psi)$ is the $k$th eigenvalue/eigenvector pair of $L$, $\lambda_k$ is simple, and that $\psi$ is non-zero at each vertex. Define  $$ B_\sigma(u,v) = \langle u, Lv\rangle +  \sigma \langle u, \sum_{(i,j) \in E_\pm} P_{ij} v\rangle$$ where $E_{\pm} = \{(i,j)\colon \psi_i \psi_j < 0\}$, $ P_{ij} = w_{ij} \pmat{q_{ji} & 1 \\ 1 & q_{ij}}$, and $q_{ij} = -\frac{\psi_i}{\psi_j}$. Then as $\sigma \to 1$, 
		\begin{enumerate}
			\item there are $k - \nu(\psi)$ eigenvalues of $B_\sigma$ which cross $\lambda_k$, and
			\item the number of eigenvalues of $B_\sigma$ that converge to $\lambda_k$ is exactly the number of nodal domains $\nu(\psi)$ of $\psi$.
		\end{enumerate}
	\end{theorem}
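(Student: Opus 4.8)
The plan is to treat $\{B_\sigma\}_{\sigma\in[0,1]}$ as a monotone spectral flow and to identify the spectrum only at the two endpoints $\sigma=0$ and $\sigma=1$. Everything hinges on two structural facts about the perturbation matrix $\Pi := \sum_{(i,j)\in E_\pm} P_{ij}$, which I would establish first. Since $(i,j)\in E_\pm$ forces $\psi_i\psi_j<0$, the entries $q_{ji}=-\psi_j/\psi_i$ and $q_{ij}=-\psi_i/\psi_j$ are both positive and satisfy $q_{ij}q_{ji}=1$, so each block $\pmat{q_{ji} & 1 \\ 1 & q_{ij}}$ has positive trace and zero determinant; hence each $P_{ij}$ is positive semidefinite of rank one, with null direction spanned by $(\psi_i,\psi_j)$. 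Thus $\Pi\succeq 0$, and by the min--max principle every eigenvalue of $B_\sigma$ is nondecreasing in $\sigma$. Moreover, because $P_{ij}$ annihilates the local restriction $(\psi_i,\psi_j)$, summing over edges gives $\Pi\psi=0$, so $B_\sigma(\cdot,\psi)=\langle\cdot,L\psi\rangle=\lambda_k\langle\cdot,\psi\rangle$: the vector $\psi$ is an eigenvector of $B_\sigma$ with eigenvalue pinned at $\lambda_k$ for every $\sigma$.

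Next I would compute $B_1$ explicitly. Adding the $\sigma=1$ perturbation to the Laplacian form $\langle u,Lu\rangle=\sum_{(i,j)\in E}w_{ij}(u_i-u_j)^2$, the cross terms $2w_{ij}u_iu_j$ on each edge of $E_\pm$ cancel exactly against the $-2w_{ij}u_iu_j$ coming from the Laplacian, leaving only the positive diagonal contributions $w_{ij}\frac{\psi_i-\psi_j}{\psi_i}u_i^2+w_{ij}\frac{\psi_j-\psi_i}{\psi_j}u_j^2$. Since the sign-changing edges are precisely those joining distinct nodal domains, $B_1$ splits as an orthogonal direct sum $B_1=\bigoplus_{\Omega}L_\Omega$ over the nodal domains $\Omega$ of $\psi$, where $L_\Omega$ is the Laplacian of the subgraph induced on $\Omega$ together with a positive potential supported on the boundary vertices of $\Omega$.

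The crux is then to identify $\lambda_k$ as the simple bottom eigenvalue of each block. A short computation using $(L\psi)_i=\lambda_k\psi_i$ shows the boundary potential is tuned so that $L_\Omega(\psi|_\Omega)=\lambda_k\,\psi|_\Omega$, i.e.\ $\psi|_\Omega$ is a $\lambda_k$-eigenvector of $L_\Omega$. As $\Omega$ is connected, $L_\Omega$ has nonpositive off-diagonal entries and is irreducible, so Perron--Frobenius applies: its smallest eigenvalue is simple with a sign-definite eigenvector. Since $\psi|_\Omega$ is sign-definite on $\Omega$ (this uses that $\psi$ vanishes nowhere), it must be this ground state, so $\lambda_k$ is the simple smallest eigenvalue of $L_\Omega$. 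Summing over the $\nu(\psi)$ nodal domains, $\lambda_k$ occurs in the spectrum of $B_1$ with multiplicity exactly $\nu(\psi)$, and no eigenvalue of $B_1$ lies strictly below $\lambda_k$.

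Finally I would run the counting argument. Let $m_-(\sigma)$ be the number of eigenvalues of $B_\sigma$ strictly below $\lambda_k$. Simplicity of $\lambda_k$ gives $m_-(0)=k-1$, while the previous paragraph gives $m_-(1)=0$ with a $\nu(\psi)$-dimensional $\lambda_k$-eigenspace. Monotonicity forbids any eigenvalue from descending, so the branches above $\lambda_k$ at $\sigma=0$ stay above and are irrelevant, while the $k-1$ branches below $\lambda_k$ all rise to $\lambda_k$ or beyond. The line $\mathrm{span}(\psi)$ lies in the $\lambda_k$-eigenspace for all $\sigma$, accounting for one branch that remains at $\lambda_k$ throughout; by simplicity at $\sigma=0$ this is the only branch at $\lambda_k$ there, so the remaining $\nu(\psi)-1$ branches that reach $\lambda_k$ at $\sigma=1$ must arrive from below. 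Hence $\nu(\psi)-1$ of the initially-subthreshold branches converge to $\lambda_k$ and the other $(k-1)-(\nu(\psi)-1)=k-\nu(\psi)$ cross strictly above; by continuity the limit $\sigma\to1$ matches the value at $\sigma=1$, giving $k-\nu(\psi)$ eigenvalues that cross $\lambda_k$ and $\nu(\psi)$ that converge to it. I expect the main obstacle to be the Perron--Frobenius step: one must verify that the potential produced by the cancellation is exactly the term for which $\psi|_\Omega$ solves the eigenproblem on $\Omega$, and that sign-definiteness then pins $\lambda_k$ to the bottom of each block -- this is where both hypotheses, simplicity of $\lambda_k$ and nonvanishing of $\psi$, are essential.
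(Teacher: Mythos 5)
Your proposal is correct and follows essentially the same route as the paper: establish that each $P_{ij}$ is positive semidefinite with $\psi$ in its kernel (so the flow is monotone and the $\psi$-branch is pinned at $\lambda_k$), observe that $B_1$ decouples into Dirichlet-type Laplacians over the nodal domains whose common simple ground-state eigenvalue is $\lambda_k$ with signed eigenvectors $\psi|_\Omega$, and then count branches between the endpoints. The only differences are in bookkeeping — you justify the ground-state step by Perron--Frobenius where the paper cites a Dirichlet-eigenvalue lemma of Biyiko\u{g}lu--Leydold--Stadler, and you count via min--max monotonicity at the endpoints where the paper differentiates the eigenvalue branches and uses analyticity to rule out tangential touchings — neither of which changes the substance of the argument.
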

	
	Part 2 is the content of \cref{theorem:edge-nodal_thm,theorem:vertex-main_theorem}, from which Part 1 is a straightforward corollary.
	
	In the rest of this section, we provide the context and motivation for this result in the continuum and graph settings. Namely, we start by reviewing what is known about nodal domains and nodal deficiencies for Laplace eigenfunctions, and then discuss similar graph eigenvector results. 
	
	\subsection{The Continuum Spectral Flow}
	\label{section:intro-continuum}
	
	Consider a connected, bounded domain $\Omega \subset \R^n$ with Lipschitz boundary. The eigenvalues of the Laplacian $\Delta$ restricted to $\Omega$, with Dirichlet boundary conditions, form an increasing sequence $0< \lambda_0 \leq \lambda_1 \leq \cdots$; call their corresponding eigenfunctions $\phi_0, \phi_1, ...$ The \textbf{nodal sets} of an eigenfunction $\phi_k$ are the connected components of $\{\phi_k  = 0\} =: \Gamma$, the \textbf{nodal domains} are the connected components of $\Omega \setminus \Gamma$, and the number of nodal domains is denoted $\nu(\phi_k)$. The \textbf{nodal deficiency} of an eigenfunction $\phi_k$ corresponding to a simple eigenvalue $\lambda_k$ is defined as $$\delta(\phi_k) = k - \nu(\phi_k);$$ if $\lambda_k$ is not simple, we set $k_* = \inf\{s \colon \lambda_s = \lambda_k \}$ and define $$\delta(\phi_{k}) = k_* - \nu(\phi_{k}).$$ Below we will see that $\delta(\phi_k)\geq 0$ for all $k$.
	
	When $n=1$ and $\Omega$ is a bounded, connected interval, the classical Sturm-Liouville theory states that the nodal deficiency is always $0$:
	
	\begin{theorem}
		\label{theorem:sturm-liouville}
		Let $\Omega = [0,1]$ and consider the Dirichlet eigenvalue problem $$\partial_{xx}u(x) = \lambda u(x) \text{ for } x\in(0,1) \text{ and } u(0) = u(1) = 0.$$ Sort the eigenvalues $0<\lambda_0 \leq \lambda_1 \leq \cdots$, and call the corresponding eigenfunctions $\phi_0, \phi_1, ...$ Then $\phi_k$ has exactly $k$ zeros in $(0,1)$.
	\end{theorem}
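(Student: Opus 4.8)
The plan is to exploit the fact that the operator has constant coefficients, so the eigenvalue problem can be solved in closed form and the interior zeros counted by hand. First I would split into cases according to the sign of the eigenvalue. The general solution of the ODE is a combination of real exponentials when the eigenvalue has one sign, an affine function in the degenerate case, and a trigonometric combination $A\cos(\mu x) + B\sin(\mu x)$ in the oscillatory case. Imposing $u(0)=u(1)=0$, the exponential and affine branches admit only the trivial solution, so every genuine eigenfunction is oscillatory; the condition $u(0)=0$ then forces $u(x)=B\sin(\mu x)$, and $u(1)=0$ forces $\sin\mu = 0$, i.e. $\mu = n\pi$ for a positive integer $n$.

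This produces the complete list of eigenfunctions $\phi(x)=\sin(n\pi x)$ with $n\geq 1$ and eigenvalues (in the convention making the spectrum positive) $\lambda = n^2\pi^2$. Since $n\mapsto n^2\pi^2$ is strictly increasing, each eigenvalue is simple and the sorted indexing $0<\lambda_0<\lambda_1<\cdots$ corresponds exactly to $n=k+1$, so $\phi_k(x)=\sin((k+1)\pi x)$. I would then count zeros directly: $\sin((k+1)\pi x)=0$ for $x\in(0,1)$ precisely when $(k+1)x$ is an integer $m$ with $0<m<k+1$, that is $x=\tfrac{m}{k+1}$ for $m=1,\dots,k$, giving exactly $k$ interior zeros. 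This yields $\nu(\phi_k)=k+1$ nodal domains and hence nodal deficiency $\delta(\phi_k)=0$, as claimed.

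The points requiring care here are bookkeeping rather than analytic: one must confirm that the non-oscillatory branches contribute no eigenvalues, so that the sine functions exhaust the spectrum and the index $k$ is not shifted, and then check that the zero count is consistent with the chosen indexing convention. I expect this exhaustiveness-and-indexing step to be the only real obstacle, and it is mild in the constant-coefficient setting. A more robust route, which I would note in a remark because it is the one that actually generalizes to variable coefficients, is the classical Sturm oscillation argument via the Pr\"ufer transformation $u=r\sin\theta$, $u'=r\cos\theta$: the phase $\theta$ satisfies a first-order ODE, its values mod $\pi$ locate the zeros of $u$, and strict monotonicity of $\theta(1;\lambda)$ in $\lambda$ together with the Sturm comparison theorem forces the $k$th eigenfunction to accumulate total phase $(k+1)\pi$ and hence exactly $k$ interior zeros. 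For the present problem, however, the explicit computation is shorter and already fully rigorous, so I would lead with it and keep the Pr\"ufer argument as context.
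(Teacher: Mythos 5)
Your proof is correct. Note, however, that the paper does not actually prove \cref{theorem:sturm-liouville} at all: it is quoted as a classical fact with a pointer to Reed--Simon, so there is no internal argument to compare against. Your explicit constant-coefficient computation is a complete and self-contained substitute. The one point worth flagging is the sign convention: as literally stated, $\partial_{xx}u=\lambda u$ with Dirichlet conditions has eigenvalues $\lambda=-n^2\pi^2<0$, so the hypothesis $0<\lambda_0\leq\lambda_1\leq\cdots$ only makes sense if one reads the operator as $-\partial_{xx}$ (the convention used elsewhere in the paper, e.g.\ in \cref{theorem:Courant_nodal}); you handle this correctly by parenthetically adopting the positive-spectrum convention, and the rest of the bookkeeping --- exhaustiveness of the sine family, simplicity of each eigenvalue, the identification $\phi_k(x)=\sin((k+1)\pi x)$, and the count of $k$ interior zeros at $x=m/(k+1)$ --- is all sound. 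Your closing remark that the Pr\"ufer/oscillation argument is what generalizes to variable coefficients is also apt, and is essentially the route taken in the cited reference; for the constant-coefficient interval the explicit computation you lead with is the shorter and entirely rigorous choice.
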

	For a modern discussion of this result, see \cite[Chapter XIII]{ReSi:Book4}.
	
	In higher dimensions the situation is significantly more difficult. One early result was Courant's nodal theorem, which provides an upper bound on the nodal deficiency:
	
	\begin{theorem}
		\label{theorem:Courant_nodal}
		Let $\Omega\subset \R^n, n\geq 2$, be a bounded, connected domain with Laplacian $\Delta$, and let $0<\lambda_1 \leq \lambda_2 \leq \cdots$ be the ordered eigenvalues for the Dirichlet eigenvalue problem $$\begin{cases} \Delta u = \lambda u & \text{ in }\Omega, \\ u = 0 & \text{ on } \partial \Omega.\end{cases}$$
		If $\phi_1, \phi_2, ...$ are the associated eigenfunctions, then $k \geq \nu(\phi_k)$.
	\end{theorem}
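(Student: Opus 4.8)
The plan is to argue by contradiction, using the Courant--Fischer min-max characterization of the Dirichlet eigenvalues together with a unique continuation property for Laplace eigenfunctions. Suppose, contrary to the claim, that $\phi_k$ has at least $k+1$ nodal domains $D_1, \dots, D_{k+1}$. On each $D_i$ the restriction $\phi_k|_{D_i}$ vanishes on $\partial D_i$ and solves $-\Delta u = \lambda_k u$, so extending it by zero produces a function $\psi_i \in H^1_0(\Omega)$ supported in $\overline{D_i}$ satisfying $\int_\Omega |\nabla \psi_i|^2 = \lambda_k \int_\Omega \psi_i^2$; the $\psi_i$ are linearly independent since they have pairwise disjoint supports.

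First I would produce a good trial function. Imposing orthogonality of a linear combination $u = \sum_{i=1}^{k+1} c_i \psi_i$ to the first $k-1$ eigenfunctions $\phi_1, \dots, \phi_{k-1}$ is only $k-1$ linear conditions on the $k+1$ coefficients, so the space of admissible coefficient vectors has dimension at least $2$; within it I can additionally arrange that one coefficient, say $c_{k+1}$, vanishes. This yields a nonzero $u \in H^1_0(\Omega)$ that is orthogonal to $\phi_1, \dots, \phi_{k-1}$, vanishes on the open set $D_{k+1}$, and, because the $\psi_i$ have disjoint supports with each contributing exactly $\lambda_k$, has Rayleigh quotient $R(u) = \int_\Omega |\nabla u|^2 \big/ \int_\Omega u^2 = \lambda_k$ exactly.

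Next I would invoke the variational principle. Expanding $u$ in the eigenbasis gives $u = \sum_{j \geq k} a_j \phi_j$ by the orthogonality constraints, and the identity $R(u) = \big(\sum_{j \geq k}\lambda_j a_j^2\big)\big/\big(\sum_{j\geq k} a_j^2\big) = \lambda_k$ forces $a_j = 0$ whenever $\lambda_j > \lambda_k$. Hence $u$ lies in the $\lambda_k$-eigenspace and is itself a genuine eigenfunction solving $-\Delta u = \lambda_k u$ on all of $\Omega$.

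Finally comes the step I expect to be the crux: $u$ is a nontrivial eigenfunction that vanishes identically on the open set $D_{k+1}$. Since $-\Delta u = \lambda_k u$ is elliptic with constant coefficients, $u$ is real-analytic in the interior of the connected domain $\Omega$, so the unique continuation (Aronszajn) principle forces $u \equiv 0$, a contradiction that establishes $\nu(\phi_k) \leq k$. The main obstacle is precisely this analytic input --- establishing unique continuation, together with the closely related regularity statement that truncating $\phi_k$ at its nodal set returns a bona fide $H^1_0(\Omega)$ function --- rather than the essentially bookkeeping min-max argument; both are classical, but they are where all the real work sits.
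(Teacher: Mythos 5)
The paper does not prove this theorem itself---it is quoted as background with a pointer to \cite[Chapter 1.5]{Ch:EigBook}---and your argument is precisely the classical proof found there: extend $\phi_k$ by zero from each nodal domain to get disjointly supported $H^1_0$ functions with Rayleigh quotient $\lambda_k$, use the $k-1$ orthogonality conditions to build an admissible combination vanishing on a spare domain, hit the equality case of min--max to conclude the combination is a genuine $\lambda_k$-eigenfunction, and finish by real-analyticity/unique continuation on the connected domain $\Omega$. Your proof is correct, and the two technical inputs you flag (that the truncation of $\phi_k$ at its nodal set lands in $H^1_0(\Omega)$ with the expected Dirichlet energy, and unique continuation) are exactly where the work sits in the standard treatment.
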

	
	For a full proof of this theorem see \cite[Chapter 1.5]{Ch:EigBook}; for more on Dirichlet eigenvalue problems, see \cite[Chapter 6.4]{Ev:PDEBook}. We mention, in particular, a corollary of Courant's nodal theorem:
	
	\begin{proposition}[{\cite[Cor. 2]{Ch:EigBook}}]
		\label{proposition:dirichlet_eigenfunctions}
		With the same terminology as in \cref{theorem:Courant_nodal},
		\begin{itemize}
			\item $\phi_1$ has constant sign;
			\item $\lambda_1$ has multiplicity $1$;
			\item $\lambda_1$ is characterized as being the only eigenvalue with eigenfunction of constant sign.
		\end{itemize}
	\end{proposition}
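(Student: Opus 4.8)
The plan is to derive all three assertions from Courant's nodal theorem (\Cref{theorem:Courant_nodal}) together with the strict positivity of the ground state, the one genuinely analytic ingredient. I begin with the first bullet. Since $\phi_1$ is an eigenfunction it is not identically zero, so $\Omega\setminus\Gamma$ has at least one connected component and $\nu(\phi_1)\geq 1$; Courant's bound gives $\nu(\phi_1)\leq 1$, hence $\nu(\phi_1)=1$. If $\phi_1$ took both signs, then by continuity the open sets $\{\phi_1>0\}$ and $\{\phi_1<0\}$ would each contain a nodal domain, forcing $\nu(\phi_1)\geq 2$, a contradiction. Thus $\phi_1$ has constant sign. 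I would record here that this argument uses only that $\lambda_1$ is the \emph{lowest} eigenvalue, so in fact \emph{every} eigenfunction associated to $\lambda_1$ has constant sign; this observation is what powers the next step.

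For the second bullet (simplicity of $\lambda_1$), I would argue by contradiction, supposing $\dim\ker(\Delta-\lambda_1)\geq 2$. Fix $\phi_1$ and normalize so that $\phi_1\geq 0$. By interior elliptic regularity $\phi_1$ is continuous on $\Omega$, and by the strong maximum principle (equivalently the Hopf/Harnack estimate for the ground state of the Dirichlet Laplacian) a nontrivial non-negative eigenfunction satisfies $\phi_1>0$ throughout the connected open set $\Omega$. Inside the at-least-two-dimensional eigenspace I can then select a nonzero $\psi$ with $\langle\phi_1,\psi\rangle=0$ in $L^2(\Omega)$, and by the remark above $\psi$ also has constant sign. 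But $\int_\Omega \phi_1\psi$ is then the integral of a function of one fixed sign, vanishing only if $\psi\equiv 0$; this contradicts orthogonality, so the eigenspace is one-dimensional.

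The third bullet follows by the same positivity-versus-orthogonality mechanism. By simplicity we have $\lambda_1<\lambda_k$ for every $k\geq 2$, so any eigenfunction $\phi$ for such a $\lambda_k$ is $L^2$-orthogonal to $\phi_1$. If $\phi$ had constant sign, then since $\phi_1>0$ in $\Omega$ the product $\phi_1\phi$ would again be of one fixed sign, forcing $\langle\phi_1,\phi\rangle\neq 0$ unless $\phi\equiv 0$ — impossible for an eigenfunction. Hence no eigenvalue other than $\lambda_1$ admits a constant-sign eigenfunction, and combined with the first bullet this yields the stated characterization of $\lambda_1$.

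I expect the main obstacle to be the single analytic upgrade from ``constant sign'' ($\phi_1\geq 0$) to strict interior positivity ($\phi_1>0$): this is precisely what makes the orthogonality computations in the last two bullets conclusive, and it is exactly where the strong maximum principle and interior regularity of eigenfunctions enter. Everything else is a short deduction from \Cref{theorem:Courant_nodal} and the orthogonality of Laplace eigenfunctions, so I would take care to cite the positivity of the Dirichlet ground state precisely rather than treat it as routine.
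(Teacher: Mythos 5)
The paper does not prove this proposition; it is quoted directly from Chavel \cite{Ch:EigBook}, and your argument is precisely the standard proof given there: Courant's bound for $k=1$ gives constant sign, the strong maximum principle upgrades this to strict interior positivity, and orthogonality against $\phi_1>0$ then yields both simplicity and the characterization of $\lambda_1$. Your proposal is correct, and you rightly isolate the strict positivity of the ground state as the only nontrivial analytic input.
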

	
	
	The upper bound in Courant's theorem can only be attained finitely many times, as implied by Pleijel's results in \cite{Pl:Nodal}. On the other hand, there exist eigenfunctions with arbitrarily large index that have few nodal domains. One procedure to construct such eigenfunctions is the following: let $\Omega = [0,\pi]\times [0,\pi]$ and let $\psi_1 = \sin(k_1x) \sin(l_1y),\psi_2 = \sin(k_2x) \sin(l_2y)$ be two eigenfunctions of $\Delta$ with Dirichlet boundary conditions, such that $k_1^2 + l_1^2 = k_2^2 + l_2^2$; $k_1 = l_2 \neq k_2 = l_1$ is one such choice. Consider the 1-parameter family of eigenfunctions $\psi_t = (1-t)\psi_1 + t \psi_2$ for $t\in[0,1]$. As $t$ varies from $0$ to $1$, the nodal domains of $\psi_1$ will merge and transform until they align with the nodal domains of $\psi_2$ when $t=1$. Depending on the choice of $k_i, l_i$ the number of nodal domains of $\psi_t$ for $t\in (0,1)$ may get as low as 2, but in general will be significantly fewer than the number of nodal domains for $\psi_1$ or $\psi_2$ for $k_i, l_i$ large. 
	
	This discussion suggests that counting nodal deficiencies is in general difficult, even in low dimensions. A step towards resolving these difficulties is presented in \cite{BeMaCo:SpecFlow}, where the nodal deficiency is reinterpreted as the Morse index of the Dirichlet-to-Neumann operator. Through this interpretation, the authors are able to count the nodal deficiency as the spectral flow of a bilinear form that combines the Dirichlet energy for a domain with a kind of Dirac mass on the eigenfunction's nodal line. Their main result is the following.
	
	\begin{theorem}
		\label{theorem:continuum_flow}
		The nodal deficiency of $\phi_k$ is precisely the number of eigenvalues of the bilinear form $$B_\sigma(u,v) := \int_{\Omega} \nabla u\cdot \nabla v  d\mu + \sigma \int_{\Gamma} uv dS$$ that cross $\lambda_k +\epsilon$ for $\epsilon>0$ sufficiently small, as $0\leq \sigma \to \infty$. Here $\Gamma = \{x \colon \phi_k(x) = 0\}\cap \Omega$, the nodal set of $\phi_k$ in the interior of the domain. Equivalently, the number of nodal domains of $\phi_k$ is exactly the multiplicity of the first Dirichlet eigenvalue on $\Omega\setminus \Gamma$, which are precisely the eigenvalues of the limiting bilinear form $B_\infty$.
	\end{theorem}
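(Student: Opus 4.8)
The plan is to track the spectrum of the self-adjoint operator associated with $B_\sigma$ as $\sigma$ runs from $0$ to $\infty$, identifying both endpoints explicitly and exploiting the monotonicity of the eigenvalue branches. First I would record that, by the representation theorem for lower-semibounded symmetric forms, $B_\sigma(u,u) = \int_\Omega |\nabla u|^2\,d\mu + \sigma\int_\Gamma u^2\,dS$ defines a self-adjoint operator $H_\sigma$ on $L^2(\Omega)$ with form domain $H^1_0(\Omega)$, compact resolvent, and hence a discrete spectrum $\beta_1(\sigma)\le\beta_2(\sigma)\le\cdots$. Because the trace term $\int_\Gamma u^2\,dS$ is nonnegative, the forms increase monotonically in $\sigma$, so the min--max characterization $\beta_j(\sigma) = \min_{\dim S=j}\max_{0\neq u\in S} B_\sigma(u,u)/\|u\|_{L^2}^2$ shows that each branch $\sigma\mapsto\beta_j(\sigma)$ is nondecreasing. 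This monotonicity is the engine of the argument: no branch ever descends, so the net decrease in the number of eigenvalues lying below a fixed threshold equals the number of upward crossings of that threshold.

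Next I would identify the two endpoints. At $\sigma=0$ the form is just the Dirichlet energy, so $H_0$ is the Dirichlet Laplacian on $\Omega$ with spectrum $\lambda_1\le\lambda_2\le\cdots$; since $\lambda_k$ is simple, for $\epsilon$ smaller than the gap $\lambda_{k+1}-\lambda_k$ there are exactly $k$ eigenvalues strictly below $\lambda_k+\epsilon$, namely $\lambda_1,\dots,\lambda_k$. For the limiting form $B_\infty$, the key observation is that driving $\sigma\to\infty$ penalizes $\int_\Gamma u^2\,dS$ to zero, so the limiting form domain is $\{u\in H^1_0(\Omega): u|_\Gamma=0\}=H^1_0(\Omega\setminus\Gamma)$, on which the Dirichlet energy decouples as $\sum_i\int_{\Omega_i}|\nabla u|^2$ over the nodal domains $\Omega_1,\dots,\Omega_\nu$. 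Thus $B_\infty$ is the orthogonal direct sum of the Dirichlet Laplacians on the individual nodal domains. Here I would invoke \cref{proposition:dirichlet_eigenfunctions}: each restriction $\phi_k|_{\Omega_i}$ lies in $H^1_0(\Omega_i)$, solves $\Delta u=\lambda_k u$, and has constant sign, hence is the ground state of $\Omega_i$ with $\lambda_k=\lambda_1(\Omega_i)$. Consequently $\lambda_k$ is the bottom of the spectrum of $B_\infty$ with multiplicity exactly $\nu(\phi_k)$, and the next eigenvalue of each block is strictly larger; shrinking $\epsilon$ below this second gap leaves precisely $\nu(\phi_k)$ eigenvalues of $B_\infty$ below $\lambda_k+\epsilon$. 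This already establishes the ``Equivalently'' clause.

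The main analytic obstacle will be to justify that the spectrum of $H_\sigma$ actually converges to that of $B_\infty$ as $\sigma\to\infty$, i.e.\ that $\beta_j(\sigma)\to\beta_j(\infty)$ for each $j$. I would prove this via Mosco (equivalently, $\Gamma$-) convergence of the forms $B_\sigma$ to $B_\infty$, which for nonnegative forms sharing a uniform compact-resolvent bound upgrades to norm-resolvent convergence and hence to convergence of each eigenvalue together with its spectral projection. The delicate points are that the penalty forces the trace to vanish in the limit---the $\liminf$ inequality is immediate from monotonicity, while the recovery sequence must be built from functions already vanishing on $\Gamma$---and that the trace map $H^1_0(\Omega)\to L^2(\Gamma,dS)$ is well defined with the decoupling exact. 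This requires knowing that $\Gamma$ carries a genuine $(n-1)$-dimensional surface measure, which holds because the nodal set of a Laplace eigenfunction is a smooth hypersurface away from a singular set of Hausdorff dimension at most $n-2$.

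Finally I would assemble the count. By monotonicity together with the spectral convergence just established, exactly the bottom $\nu(\phi_k)$ branches increase to $\lambda_k$ from below and therefore never reach $\lambda_k+\epsilon$, whereas each of the remaining branches starting below $\lambda_k+\epsilon$ converges to a value strictly exceeding $\lambda_k+\epsilon$ and hence crosses the threshold exactly once; the branches already above $\lambda_k+\epsilon$ at $\sigma=0$ stay above. Since there are $k$ branches below $\lambda_k+\epsilon$ at $\sigma=0$ and $\nu(\phi_k)$ that remain below as $\sigma\to\infty$, the number of eigenvalues crossing $\lambda_k+\epsilon$ equals $k-\nu(\phi_k)=\delta(\phi_k)$, as claimed.
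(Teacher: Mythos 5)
Your proposal is correct, but a point of comparison first: the paper never proves \cref{theorem:continuum_flow} itself --- it is quoted from \cite{BeMaCo:SpecFlow} --- and the only proof actually carried out is of the graph analogue, \cref{theorem:edge-nodal_thm}, assembled from \cref{lemma:edge-lambda_k_constant,lemma:edge-lambda_sigma_increasing,lemma:edge-dirichlet_eigvecs,lemma:edge-lambda_k_crossings}. Measured against that template, your argument follows the same skeleton step for step: monotonicity of the branches (you use min--max; the paper differentiates along the branch to get $\lambda' = \langle u, Pu\rangle \geq 0$, whose continuum analogue is $\lambda'(\sigma) = \int_\Gamma u^2\, dS \geq 0$, and this derivative formula is what later powers the constancy dichotomy), identification of $B_\infty$ as the orthogonal direct sum of Dirichlet Laplacians over the nodal domains with $\phi_k|_{\Omega_i}$ the signed ground state via \cref{proposition:dirichlet_eigenfunctions} (the analogue of \cref{lemma:edge-dirichlet_eigvecs}), and the net-decrease crossing count. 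What you add, and rightly isolate as the one genuinely continuum difficulty with no graph counterpart, is the spectral convergence as $\sigma \to \infty$: Mosco convergence of the monotone family upgraded to norm-resolvent convergence, together with the identification $\{u \in H^1_0(\Omega) \colon u|_\Gamma = 0\} = H^1_0(\Omega\setminus\Gamma)$, which leans on nodal-set regularity (singular set of Hausdorff dimension at most $n-2$, hence of zero $H^1$-capacity). In the graph setting both steps are trivial because everything is finite dimensional, which is also why the paper can compactify the parameter range to $\sigma \in [0,1]$ while the continuum flow genuinely needs $\sigma \to \infty$.

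Two small repairs. First, your count tacitly uses $1$-based indexing ($\lambda_1,\dots,\lambda_k$ strictly below $\lambda_k+\epsilon$ at $\sigma=0$), whereas \Cref{section:intro-continuum} enumerates the Dirichlet spectrum from $\lambda_0$; with that convention the arithmetic would read $k+1-\nu(\phi_k)$, so state explicitly that you adopt the convention of \cite{BeMaCo:SpecFlow}, under which $\delta(\phi_k) = k - \nu(\phi_k)$ is coherent. Second, ``crosses the threshold exactly once'' deserves a word: a nondecreasing sorted eigenvalue curve could in principle stall at the level $\lambda_k+\epsilon$. Since $B_\sigma$ is linear in $\sigma$ (a type (A) analytic family), the branches are analytic, and a branch with vanishing derivative at one parameter value is globally constant --- this is precisely the role \cref{lemma:edge-lambda_k_crossings} plays in the graph proof --- or, more cheaply, observe that your net-decrease framing only compares cardinalities below the threshold at the two endpoints, which makes the multiplicity of individual crossings irrelevant.
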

	
	The proof is straightforward after the right framework is introduced, and many of the results and proofs in this paper are direct graph analogues of the above continuum result. In our formulation, the domain $\Omega$ is replaced by a weighted graph $G$, $\Delta$ is replaced by the graph Laplacian $L$ on $G$, and the nodal set $\Gamma$ is replaced by the edges over which a graph Laplacian eigenvector changes sign. \Cref{theorem:main_theorem} shows that our graph spectral flow is able to count the nodal deficiency of a graph Laplacian eigenvector, just as the continuum spectral flow counts the nodal deficiency for a Laplace eigenfunction.
	
	Naturally, one could ask what happens when the graph spectral flow is constructed on a graph built from a point cloud sampled from $\Omega \subset \R^n$; as we sample more points and construct ``denser and denser'' graphs, do the graph spectral flows converge to the continuum spectral flow? This question will be the subject of future work.
	
	\subsection{Nodal deficiencies of graph Laplacian eigenvectors}
	\label{section:intro-nodal_deficiencies_graphs}
	
	One of the implicit themes in this work is connecting analytic properties of the graph Laplacian to topological properties of the underlying graph. Similar ideas can be found in some of the early work of Fiedler (see, for example, \cite[Theorem (2,3)]{Fi:AcycMat}). Here we highlight the monograph \cite{BiLeSt:EigvecBook}, along with some more recent work due to Berkolaiko in \cite{Be:LowerBound},\cite{Be:NodalMag}, and shortly after by Colin De Verdiere \cite{dV:MagPaper}. 
	
	While graph Laplacians have been studied since the 20th century, nodal domain theorems for graph Laplacians have appeared relatively recently;  \cite{BiLeSt:EigvecBook} contains a fairly complete overview of what is currently known. Since graph functions are discrete, the notion of ``nodal set'' requires a little more care. Given a graph $G = (V, E, w)$ and a vector $\psi\in \R^{|V|}$ (interpreted as a function $\psi \colon V \to \R$), two kinds of nodal domains are defined. The first are the \textbf{weak nodal domains}, which are maximally connected subgraphs corresponding to the edge sets $\{ (i,j) \colon w_{ij}>0 \text{ and } \psi_i\psi_j \geq 0  \}$; these are precisely the connected subgraphs where $\psi$ takes the same sign, including zero, on each vertex. There are also the \textbf{strong nodal domains}, which are maximally connected subgraphs corresponding to the edge sets $\{ (i,j) \colon w_{ij}>0 \text{ and } \psi_i\psi_j > 0  \}$. One of the main results in the literatures is
	
	\begin{theorem}[{\cite[Theorem 3.1]{BiLeSt:EigvecBook},\cite[Theorem 2]{davies.2001.LAA}}]
		\label{theorem:graph_nodal_counts}
		For any graph $G$, the $k$th eigenvector $\psi_k$ of the graph Laplacian $L$ has at most $k$ weak nodal domains and at most $k + r - 1$ strong nodal domains, where $r$ is the multiplicity of $\lambda_k$.
	\end{theorem}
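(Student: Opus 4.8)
The plan is to derive both bounds from a single quadratic-form identity, in the spirit of Courant's classical argument, and then separate the weak and strong cases by analyzing when equality can occur. Write the eigenvalues as $\lambda_1 \le \lambda_2 \le \cdots$, set $\psi = \psi_k$ with $L\psi = \lambda_k\psi$, and let $D_1,\dots,D_N$ denote the nodal domains (weak or strong). For each domain define $\phi_p$ to be $\psi$ restricted to $D_p$ and extended by zero off $D_p$. These have pairwise disjoint supports, so they are linearly independent, and $\sum_p \phi_p = \psi$ (zero vertices, which matter only in the strong case, contribute nothing). Throughout, $R(u) = \langle u, Lu\rangle/\langle u,u\rangle$ denotes the Rayleigh quotient.

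First I would establish the key identity on $W := \mathrm{span}\{\phi_1,\dots,\phi_N\}$. A vertex-wise computation of $L\phi_p$ gives $\langle \phi_p, L\phi_p\rangle = \lambda_k\|\phi_p\|^2 - \sum_{q\ne p}\gamma_{pq}$ and $\langle\phi_p, L\phi_q\rangle = \gamma_{pq}$ for $p\ne q$, where $\gamma_{pq} = -\sum_{i\in D_p,\, j\in D_q,\, i\sim j} w_{ij}\psi_i\psi_j \ge 0$ records the edges running directly between $D_p$ and $D_q$; such edges always join vertices of opposite sign so $\gamma_{pq}\ge 0$, while edges to zero vertices contribute nothing (this is why $\sum_{q\ne p}\gamma_{pq}$ captures the full boundary correction). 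Summing over $u = \sum_p c_p\phi_p$ and symmetrizing yields
\[
\langle u, Lu\rangle = \lambda_k\|u\|^2 - \frac{1}{2}\sum_{p\ne q}(c_p - c_q)^2\gamma_{pq} \le \lambda_k\|u\|^2,
\]
so $R(u)\le\lambda_k$ for every $u\in W$. By the Courant--Fischer min-max principle, $\lambda_N \le \max_{u\in W}R(u)\le\lambda_k$, and since the eigenvalues are nondecreasing this already gives the strong bound: if $\lambda_k = \lambda_a = \cdots = \lambda_{a+r-1}$ is the block of equal eigenvalues, then $\lambda_N\le\lambda_k$ forces $N \le a + r - 1 \le k + r - 1$.

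To sharpen this to $N\le k$ in the weak case, I would examine the equality locus. Equality $R(u)=\lambda_k$ forces $c_p = c_q$ whenever $\gamma_{pq}>0$, i.e. whenever $D_p$ and $D_q$ are joined by a direct (necessarily sign-changing) edge. For weak domains every vertex lies in some domain and distinct adjacent domains are always joined by such an edge, so the domain-adjacency graph is connected because $G$ is; hence equality forces all $c_p$ equal, i.e. $u\in\mathbb{R}\psi$. With this in hand I would run the standard Courant dimension count: if there were $N\ge k+1$ weak domains, the $(k+1)$-dimensional span of $\phi_1,\dots,\phi_{k+1}$ would contain a subspace of dimension $\ge 2$ orthogonal to the first $k-1$ eigenvectors, on which min-max gives $R\ge\lambda_k$; combined with $R\le\lambda_k$ this forces every element to be an eigenvector, hence a multiple of $\psi$, contradicting dimension $\ge 2$. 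I would adapt this count to a genuinely multiple $\lambda_k$ by replacing ``the first $k-1$ eigenvectors'' with the appropriate $\lambda_k$-block orthogonality.

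The main obstacle is exactly the point that divides the two estimates: in the strong case the zero vertices can disconnect the domain-adjacency graph, so the equality locus $R(u)=\lambda_k$ is no longer the single line $\mathbb{R}\psi$ but may have dimension up to the number of its connected components, and it is this extra freedom that obstructs any improvement below $k+r-1$. Making the dichotomy rigorous — tracking precisely how zero vertices break connectivity and how the surviving multiplicity $r$ feeds into the min-max count — is the delicate part; by contrast the quadratic-form identity itself is routine once the restrictions $\phi_p$ are set up correctly.
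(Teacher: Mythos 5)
The paper itself offers no proof of \cref{theorem:graph_nodal_counts}: it is quoted from \cite{BiLeSt:EigvecBook} and \cite{davies.2001.LAA}, so there is nothing internal to compare you against. Your argument is, in substance, the standard Courant-type proof from those references, and its core is sound: the identity $\langle u,Lu\rangle=\lambda_k\|u\|^2-\tfrac12\sum_{p\ne q}(c_p-c_q)^2\gamma_{pq}$ on $W=\mathrm{span}\{\phi_1,\dots,\phi_N\}$ is correct (your computation of $\gamma_{pq}$ checks out, and edges into zero vertices indeed drop out of both the diagonal and off-diagonal terms), and combining $\dim W=N$ with min--max immediately yields $\lambda_N\le\lambda_k$, hence the strong bound $N\le a+r-1\le k+r-1$, where $a$ is the first index of the $\lambda_k$-block. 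That half is complete as written.

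Two things remain for the weak bound. First, the claim that distinct adjacent weak domains are always joined by an edge with $\gamma_{pq}>0$ (so that the equality locus collapses to $\mathbb{R}\psi$) is true only under the convention that weak nodal domains are the connected components of the subgraph on edges with $\psi_i\psi_j\ge0$, which partition $V$. Under the alternative convention that the paper's own ``X'' example uses --- a nonnegative and a nonpositive weak domain overlapping in a zero vertex --- two weak domains can be adjacent only through a zero vertex, the corresponding $\gamma_{pq}$ vanishes, and your equality analysis no longer forces $c_p=c_q$; this is exactly the failure mode you describe, but you misattribute it to the strong case only, whereas it is the precise point at which the weak argument needs the partition convention to be fixed explicitly. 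Second, the adaptation to a multiple $\lambda_k$ that you defer requires no new idea: for $u$ orthogonal to any orthonormal $\psi_1,\dots,\psi_{k-1}$ with $R(u)=\lambda_k$, expanding in the eigenbasis kills every coefficient with $\lambda_j>\lambda_k$, so $u$ lies in the $\lambda_k$-eigenspace and your equality analysis applies verbatim, forcing $u\in\mathbb{R}\psi$ if $N=k+1$ and $u=0$ if $N>k+1$, either way contradicting the two-dimensional intersection. With those two points written out, the proof is complete.
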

	Commonly found proofs utilize matrix-theoretic methods, and are actually stated for a larger class of operators called generalized graph Laplacians. In our work we focus on strong nodal domains, though our vertex-based flow can be used to count weak nodal domains directly. Moreover, our methods have a distinct spectral theoretic flavour, due to the continuum analogue our construction is based on.
	
	We next highlight recent contributions by Berkolaiko \cite{Be:NodalMag} and Colin de Verdiere \cite{dV:MagPaper}, whose proofs are closer in spirit to the current work. Given a graph $G = (V,E)$, define the $1$st Betti number $\beta_1$ of $G$ to be the number of linearly independent cycles in $G$; this number can be interpreted in the sense of simplicial homology, or as the minimum number of edges that need to be removed from $E$ to turn $G$ into a tree. Also suppose $G$ has a \textbf{magnetic field}, which is a function $B\colon \overrightarrow{E\,} \to \mathbb{C}$ that satisfies $B_{ji}= \overline{B_{ij}}$ and $|B_{ij}| = 1$ for all $(i,j)\in \overrightarrow{E\,}$, where $\overrightarrow{E\,}$ is the collection of oriented edges of $G$. We construct a magnetic Laplacian $L_B$ on $G$ through the quadratic form $$q_B(f) = \frac{1}{2} \sum_{(i,j)\in \overrightarrow{E}} w_{ij} |f_i - e^{B_{ij} \sqrt{-1}} f_j|^2 - \sum_{i \in V} V_i |f_i|^2,$$ where $V_i = \sum_{(i,j)\in E} w_{ij}$. Note that $L_B$ is Hermitian and so has real spectrum $$\lambda_1(L_B) < \lambda_2(L_B) \leq \cdots \leq \lambda_{|V|}(L_B).$$ See  \cite{dV:MagPaper} for more on this construction.
	
	Suppose $\psi$ is an eigenvector associated to the $k$th eigenvalue $\lambda_k$ of $L_B$. \begin{theorem}[{\cite[Theorem 1.1]{Be:NodalMag}}]
		\label{theorem:magnetic_counts}
		If $\lambda_k$ is simple and $\psi$ is never zero, then the number of edges $Z_{\psi}$ over which $\psi$ changes sign satisfies $k-1\leq Z_{\psi} \leq k-1+\beta_1$.
		
		Moreover, the nodal deficiency $\delta(\psi) = Z_{\psi} - \nu(\psi)$ is the Morse index (number of negative eigenvalues) of the operator $\Lambda_k \colon B \to \lambda_k (L_B)$, and $\Lambda_k$ is smooth at its critical point $B\equiv 1$. 
	\end{theorem}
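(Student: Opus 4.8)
The plan is to analyze the map $\Lambda_k\colon B\mapsto \lambda_k(L_B)$ near the trivial field $B\equiv 1$ and to recover $\delta(\psi)$ as the number of negative directions of its Hessian there. First I would cut down the parameter count using gauge invariance: the substitution $f_i\mapsto e^{\sqrt{-1}\,\theta_i}f_i$ conjugates $L_B$ by a diagonal unitary and replaces each $B_{ij}$ by $e^{\sqrt{-1}(\theta_i-\theta_j)}B_{ij}$, so $\spec(L_B)$ is unchanged. Hence $\Lambda_k$ depends only on the fluxes $\prod_{(i,j)\in C}B_{ij}$ around a basis of the cycle space of $G$, and descends to a function $\lambda_k(\alpha)$ on the torus $\mathbb{T}^{\beta_1}$ of independent fluxes. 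Because $\lambda_k$ is simple at the trivial flux $\mathbf 1$, the analytic perturbation theory of Hermitian families makes $\lambda_k(\alpha)$ real-analytic in a neighbourhood of $\mathbf 1$; this already supplies the smoothness assertion.

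Next I would check that $B\equiv 1$ is a critical point. Complex conjugation of the quadratic form $q_B$ carries $L_B$ to $L_{\overline B}$, which reverses every flux $\alpha\mapsto\alpha^{-1}$; since conjugation is an antilinear isometry it preserves the real spectrum, giving $\lambda_k(\alpha)=\lambda_k(\alpha^{-1})$. In additive coordinates centred at $\mathbf 1$ this says $\lambda_k$ is even, so its differential vanishes at the origin and $B\equiv 1$ is a critical point of $\Lambda_k$, as claimed.

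The heart of the proof is to compute $\mathrm{Hess}\,\lambda_k$ at $B\equiv 1$ and to identify its Morse index with $\delta(\psi)$. I would begin by conjugating $L$ with the real sign matrix $S=\diag(\mathrm{sgn}\,\psi_i)$, which is legitimate since $\psi$ vanishes nowhere; then $SLS$ is isospectral to $L$, its eigenvector $S\psi=|\psi|$ is strictly positive, and its off-diagonal entries are negated exactly on the $Z_\psi$ edges of $E_\pm=\{(i,j)\colon\psi_i\psi_j<0\}$ where $\psi$ changes sign. Turning on flux rotates precisely these couplings, so second-order perturbation theory writes the Hessian as a quadratic form on flux space that pairs the sign-change edges through the reduced resolvent of $L-\lambda_k$ restricted to $\psi^\perp$. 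The main obstacle, where essentially all the work lies, is to compute the inertia of this form: I expect it to follow by contracting each strong nodal domain of $\psi$ to a point, so that $E_\pm$ descends to the edges of the nodal quotient graph, and by producing a basis of flux directions in which the negative subspace is spanned by the independent cycles of that quotient. This is the discrete analogue of the second-variation computation that underlies the continuum flow of \Cref{theorem:continuum_flow}.

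Finally the two-sided bound falls out of the index count. A Morse index is a nonnegative integer bounded by the dimension $\beta_1$ of the torus, so $0\le\delta(\psi)\le\beta_1$. The computation above identifies $\delta(\psi)$ with the number of sign changes in excess of the baseline $k-1$ forced by $\lambda_k$ being the $k$th (simple) eigenvalue; substituting the bound on the index yields $k-1\le Z_\psi\le k-1+\beta_1$. The same combinatorics is visible on the graph itself: deleting the $Z_\psi$ sign-change edges severs $G$ into its $\nu(\psi)$ nodal domains, and contracting each domain to a vertex produces a connected graph on $\nu(\psi)$ vertices with $Z_\psi$ edges whose cycle rank is at most $\beta_1$, which together with the discrete Courant bound $\nu(\psi)\le k$ of \Cref{theorem:graph_nodal_counts} is consistent with the upper estimate.
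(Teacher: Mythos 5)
First, a point of alignment: the paper does not prove \cref{theorem:magnetic_counts} at all. It is quoted verbatim from \cite[Theorem 1.1]{Be:NodalMag} as background, and the paper's own contribution is an \emph{alternative} route to the resulting nodal bounds that avoids magnetic fields entirely, via the spectral flow of $L_\sigma = L + \sigma P$ (\cref{theorem:edge-nodal_thm,theorem:vertex-main_theorem}). So your proposal cannot be compared against an in-paper argument; it has to stand against the magnetic proof in \cite{Be:NodalMag,dV:MagPaper}. Measured that way, your scaffolding is the correct one: gauge invariance does reduce $\Lambda_k$ to a function of $\beta_1$ fluxes on a torus, simplicity of $\lambda_k$ does give real-analyticity there, and the time-reversal symmetry $\lambda_k(\alpha)=\lambda_k(\alpha^{-1})$ (together with differentiability) does establish that $B\equiv 1$ is a critical point. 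Those parts are sound.

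The gap is that the entire content of the theorem sits in the step you defer with ``I expect it to follow by\dots''. Your proposed resolution --- that the negative subspace of the Hessian is spanned by the independent cycles of the nodal quotient graph --- cannot be correct, because that would make the Morse index equal to the full cycle rank of the quotient, namely $Z_\psi-\nu(\psi)+1$ for connected $G$, a purely combinatorial quantity depending only on the sign pattern of $\psi$. The actual index is the nodal surplus $Z_\psi-(k-1)$, which genuinely depends on where $\lambda_k$ sits in the spectrum and is strictly smaller than the quotient's cycle rank whenever Courant's bound $\nu(\psi)\leq k$ is strict. (Your own write-up contains this tension: you assert the index is $Z_\psi-\nu(\psi)$ in one sentence and ``the number of sign changes in excess of $k-1$'' in the next; these agree only when $\nu(\psi)=k$.) Relatedly, the claim that $k-1$ is the ``baseline forced by $\lambda_k$ being the $k$th eigenvalue'' is asserted, not proved; in \cite{Be:NodalMag} it requires a separate eigenvalue-counting/bracketing argument comparing $L$ with the operator obtained by cutting the sign-change edges, and without it the lower bound $Z_\psi\geq k-1$ does not follow (your Euler-formula remark at the end yields only the upper bound $Z_\psi\leq k-1+\beta_1$ from \cref{theorem:graph_nodal_counts}, not the lower one). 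As written, the proposal establishes smoothness and criticality but not the index identity or the two-sided bound.
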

	
	A corollary of this theorem is that, under the same assumptions, $k - \beta_1 \leq \nu(\psi) \leq k$, where $\nu(\psi)$ is the number of nodal domains of the graph function $\psi$. As mentioned above proofs of these results can be found in \cite{Be:LowerBound,Be:NodalMag,dV:MagPaper}. This paper provides an alternative proof of these upper bounds utilizing Dirichlet eigenvalues of graphs built from the original graph Laplacian, without requiring the use of magnetic fields.
	
	\subsection{Organization of paper}
	\label{section:intro-organization}
	
	\Cref{section:edge-based_flow} gives an edge-based graph spectral flow construction, which is used to give an alternative proof of Courant's nodal theorem for graphs. \Cref{section:vertex-based_flow} gives an alternative vertex-based graph spectral flow; while no new results are established with this other flow, it does suggest interesting connections between an eigenvector's sign-change edges and its nodal domains. Finally, \Cref{section:examples} provides some numerical examples illustrating the behaviour of the edge-based and vertex-based spectral flow for a number of graphs, including Erd\'{o}s-Renyi graphs for a range of probabilities.
	
	\subsection{Acknowledgments}
	\label{section:intro-acknowledgements}
	
	Thanks to J.L. Marzuola for suggesting this problem, guidance through the research process, and careful readings of preliminary versions of this paper. Thanks to G. Berkolaiko for suggesting this problem on graphs at an AIMS meeting, as well as H.T. Wu for conversations on the numerical implementation. Thanks to anonymous referees for extensive and helpful suggestions. The author was supported by NSF CAREER Grant DMS-1352353 and NSF Applied Math Grant DMS-1909035, as well as the Thelma Zaytoun Summer Research Fellowship from the UNC-CH Graduate School.
	
	\section{The edge-based spectral flow}
	\label{section:edge-based_flow}
	
	In this section we define the edge-based graph spectral flow and use it to prove that the $k$th eigenvector of a graph Laplacian has no more than $k$ nodal domains. \Cref{section:edge-based_flow-notation} states assumptions used throughout this section and establishes notation. \Cref{section:edge-based_flow-construction,section:edge-based_flow-nodal_count} give the edge-based spectral flow construction and establish some of its properties, including the main result of this paper.
	
	\subsection{Notation and assumptions}
	\label{section:edge-based_flow-notation}
	
	Suppose $G = (V,E,w)$ is a weighted graph without multiple edges. Vertices will generally be denoted by natural numbers, edges will be $2$-tuples of vertices and will be denoted as either $(i,j)$, $e_{ij}$, or just $e$, and edge weights will be written $w(e), w((i,j)),$ or $w_{ij}$; $w(e) = 0$ means the edge $e$ is not present in the graph. We only consider graphs with non-negative edge weights. The adjacency matrix of $G$ is the $|V|\times |V|$ matrix $W = (w_{ij})_{(i,j)\in E}$, and the degree matrix is the diagonal $|V|\times |V|$ matrix $D = (\sum_j w_{ij})_{i \in V}$. The spectrum of $G$ will be the spectrum of its graph Laplacian $L = D-W$; in particular, we are not considering the normalized graph Laplacian in this paper, nor are we considering the spectrum of adjacency matrices. For more on graph Laplacians and their spectra, see \cite{Ch:SpecBook} or \cite{BrHa:SpecBook}.
	
	Given a graph $G = (V,E,w)$, its graph Laplacian $L$ is positive semi-definite and so its spectrum consists of real eigenvalues $0 = \lambda_1 \leq \lambda_2 \leq \cdots \leq \lambda_n$. For each $k = 1, 2, ...$, we consider the eigenvalue/eigenvector pair $(\lambda_k, \psi)$. We make two assumptions on the pairs $(\lambda_k, \psi)$ throughout this paper:
	
	\begin{assumption}
		\label{assump:multiplicity}
		In case $\lambda_k$ has multiplicity greater than 1, $k$ will be the first index for which $\lambda_k$ appears in the spectrum, i.e. $k = \min\{ m \colon  L\psi = \lambda_m \psi \}$.
	\end{assumption}

	\begin{assumption}
		\label{assump:non-zero}
		The eigenvector $\psi$ is non-zero on each vertex of $G$, which turns out to be a generic property of graph Laplacians; see the introduction of \cite{Be:NodalMag} for an extended discussion.
	\end{assumption}

	This first assumption ensures that our bounds are not affected by eigenvalue multiplicities. 
	
	The second assumption greatly simplifies notation and the ensuing arguments, and can always be enforced by (1) perturbing the graph Laplacian by a diagonal matrix $Q$, or (2) perturbing the edge weights to ``shift'' a zero off of a vertex. Note that this perturbation may significantly change the number of weak and strong nodal domains: suppose four strong nodal domains/two weak nodal domains meet in an ``X'' with the center of the ``X'' a zero vertex. Performing the aformentioned perturbation will cause two of the strong nodal domains to merge, while splitting one of the weak nodal domains into two separate components. In the interest of completeness we also mention the necessary modifications when $\psi$ does have zeros on $G$, though the results and proofs are the same.
	
	\subsection{The construction}
	\label{section:edge-based_flow-construction}
	
	Given an eigenvalue/eigenvector pair $(\lambda_k,\psi)$ of the graph Laplacian $L$, define the sign change edges $E_{\pm} = \{(i,j) \colon \psi_i \psi_j < 0 \}$. For each $(i,j)\in E_\pm$, define the rank-1, $|V|\times |V|$ matrices $P_{ij} = w_{ij} \pmat{q_{ji} & 1 \\ 1 & q_{ij}}$ with $q_{ij} := -\frac{\psi_i}{\psi_j}$ and zeros everywhere else. 
	
	\begin{defn}
		\label{def:edge_based_flow}
		We define the \textbf{edge-based spectral flow} as the collection of eigenvalues associated to the family of bilinear forms 
	
	\begin{align*}
		B_{\sigma}(u,v) :&= \langle u, L v \rangle + \sigma \langle u, \sum_{(i,j)\in E_\pm} P_{ij} v \rangle\\
			&= \langle u, \left(L+\sigma \sum_{(i,j)\in E_\pm} P_{ij} \right) v \rangle.
	\end{align*}
	We set $P =\sum_{(i,j)\in E_\pm} P_{ij}$ and  $L_\sigma = L+\sigma P$, so that $B_\sigma(u,v) = \langle u , L_\sigma v\rangle$. The edge-based spectral flow is the curve $(\lambda_1(\sigma), \lambda_2(\sigma),...,\lambda_{|V|}(\sigma))$ with $\sigma\in[0,1]$ where each $\lambda_i$ is an eigenvalue branch of $B_\sigma$.
	\end{defn}
	A similar procedure works when $\psi$ has zeros on $G$: construct $L_\sigma$ as above, and then delete the rows and columns corresponding to the zeros of $\psi$. This procedure is the construction of a Dirichlet graph Laplacian associated to zeros on a graph, which we revisit in \Cref{section:vertex-based_flow-relation_to_Dirichlet}. For simplicity we keep \cref{assump:non-zero}.
	
	We are interested in the \textbf{nodal domains} of $\psi$, which are the connected components of the subgraph of $G$ induced by the edge set $E\setminus E_\pm$ on the vertex set $V$. Explicitly, this induced subgraph is $G_\psi := (V, E\setminus E_\pm, w_{\psi})$, where $w_\psi(e) := w(e)$ for $e\in E\setminus E_\pm$ and $w_\psi((i,i)) := \sum_{(i,j)\in E_\pm} (1+q_{ji}) w_{ij}$. The value of $w_\psi$ on self-loops/vertices keeps track of those edges that cross into different nodal domains, and imposes effective Dirichlet boundary conditions across those edges. The choice of $w_{ij}(1+q_{ji})$, versus just $w_{ij}$, in the definition of $w_\psi$ is motivated by \cref{lemma:edge-lambda_k_constant,lemma:vertex-lambda_k_constant}: this choice of edge weight ensures that $\psi$ is still an eigenvector of $B_\sigma$ corresponding to $\lambda_k$, and allows us to construct a basis of $\lambda_k$ eigenvectors of $G_{\psi}$ by restricting $\psi$ to the connected components of $G_{\psi}$. We mention that, by \cref{assump:non-zero}, the nodal domains we consider are what are called strong nodal domains in the literature \cite{BiLeSt:EigvecBook}.
	
	The \textbf{number of nodal domains} $\nu(\psi)$ of $\psi$ is the number of connected components of $G_\psi$, and the \textbf{nodal deficiency} $\delta(\psi)$ is $\delta(\psi) = k - \nu(\psi)$. The main result of this section relates the eigenvalues of $L_\sigma$ to $\nu(\psi)$:
	
	\begin{theorem}
		\label{theorem:edge-nodal_thm}
		The nodal domain count $\nu(\psi)$ is the multiplicity of $\lambda_k$ in the spectrum of $L_1 = L +  P$, and the nodal deficiency of $\psi$ is precisely the number of eigenvalue branches that cross $\lambda_k$.
	\end{theorem}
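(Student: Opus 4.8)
The plan is to reduce everything to two elementary properties of the perturbation $P$ together with a Perron--Frobenius argument on the nodal domains. First I would record the structural facts about $P$ that drive the whole proof. On a sign-change edge $(i,j)\in E_\pm$ we have $q_{ij}q_{ji} = (-\psi_i/\psi_j)(-\psi_j/\psi_i) = 1$ with $q_{ij},q_{ji}>0$, so the $2\times 2$ block of $P_{ij}$ has determinant zero and positive trace; hence each $P_{ij}$ is rank one and positive semidefinite, and therefore $P = \sum_{(i,j)\in E_\pm} P_{ij}\succeq 0$. By the min-max characterization this makes every eigenvalue branch $\lambda_i(\sigma)$ of $L_\sigma = L+\sigma P$ nondecreasing on $[0,1]$. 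Second, a one-line computation gives $P_{ij}\psi = 0$ for each $(i,j)\in E_\pm$ (the $i$ and $j$ entries are $w_{ij}(q_{ji}\psi_i + \psi_j)=0$ and $w_{ij}(\psi_i + q_{ij}\psi_j)=0$), so $P\psi = 0$ and $L_\sigma\psi = \lambda_k\psi$ for all $\sigma$; that is, $\lambda_k$ is a constant eigenvalue branch.

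Next I would identify $L_1 = L+P$ with the graph Laplacian of $G_\psi$. Summing the contributions of $P$, the off-diagonal entry over each sign-change edge cancels ($-w_{ij}+w_{ij}=0$), so every edge in $E_\pm$ is deleted, while the diagonal at $i$ becomes the within-domain degree plus $w_\psi((i,i)) = \sum_{j:(i,j)\in E_\pm} w_{ij}(1-\psi_j/\psi_i)\ge 0$. Thus, after reordering vertices by nodal domain, $L_1$ is block diagonal, $L_1 = \bigoplus_{m=1}^{\nu(\psi)} L_1^{(m)}$, where each block $L_1^{(m)}$ is the Laplacian of a connected nodal domain $\Omega_m$ together with a nonnegative diagonal potential supported on its boundary vertices (those having a sign-change neighbour). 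A short check confirms $L_1^{(m)}\,(\psi|_{\Omega_m}) = \lambda_k\,(\psi|_{\Omega_m})$, the discrete analogue of imposing Dirichlet conditions across the nodal set.

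The main step, and the place I expect the real work to be, is to show that $\lambda_k$ is the \emph{smallest} eigenvalue of each block $L_1^{(m)}$ and is \emph{simple} there. Each $L_1^{(m)}$ has nonpositive off-diagonal entries and is irreducible (the nodal domain is connected), so for large $c$ the matrix $cI - L_1^{(m)}$ is nonnegative and irreducible; Perron--Frobenius then gives a simple largest eigenvalue with a strictly positive eigenvector, equivalently a simple smallest eigenvalue of $L_1^{(m)}$ whose eigenvector has constant sign, with every other eigenvector changing sign. Since $\psi|_{\Omega_m}$ has constant sign (definition of nodal domain) and is an eigenvector with eigenvalue $\lambda_k$, it must be this ground state; hence $\lambda_k = \lambda_{\min}(L_1^{(m)})$ is simple in the $m$th block and all other eigenvalues of the block are strictly larger. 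Summing over the $\nu(\psi)$ blocks shows that $\lambda_k$ has multiplicity exactly $\nu(\psi)$ in $L_1$ and that every eigenvalue of $L_1$ satisfies $\lambda\ge\lambda_k$, which is the first assertion. This parallels \cref{proposition:dirichlet_eigenfunctions} in the continuum, with Perron--Frobenius replacing the constant-sign characterization of the first Dirichlet eigenfunction.

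Finally I would deduce the deficiency count from the monotone counting function $N(\sigma) := \#\{i : \lambda_i(\sigma) < \lambda_k\}$. Because $\lambda_k$ is simple at $\sigma=0$ we have $N(0) = k-1$, and by the previous step $N(1) = 0$ while exactly $\nu(\psi)$ eigenvalues equal $\lambda_k$ at $\sigma=1$. Monotonicity of the branches forces every eigenvalue above $\lambda_k$ at $\sigma=0$ to stay above, so the $\nu(\psi)$ branches terminating at $\lambda_k$ and the $k-\nu(\psi)$ branches terminating strictly above it all originate from the $k$ branches that are $\le\lambda_k$ at $\sigma=0$; subtracting the constant $\psi$-branch, precisely $k-\nu(\psi)=\delta(\psi)$ branches move from below $\lambda_k$ to strictly above it, i.e.\ cross $\lambda_k$. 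Since this count is expressed through $N(\sigma)$ it is insensitive to branch crossings, so no genericity assumption on the intermediate spectrum of $L_\sigma$ is needed. The only delicate points are verifying the sign and irreducibility hypotheses of Perron--Frobenius on each block and the bookkeeping that separates the constant branch from the crossing branches.
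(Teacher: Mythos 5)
Your proof is correct and follows the same outline as the paper's: monotone eigenvalue branches from positive semidefiniteness of $P$, the constant $\psi$-branch from $P\psi=0$, identification of $L_1$ with a block-diagonal Dirichlet-type Laplacian over the nodal domains, simplicity of the bottom eigenvalue of each block with a signed ground state, and then a count. The two places you deviate are both local substitutions worth noting. First, where the paper's \cref{lemma:edge-dirichlet_eigvecs} cites \cite[Lemma 6.1]{BiLeSt:EigvecBook} for the simplicity and signedness of the first Dirichlet eigenvalue on each component, you prove it directly by applying Perron--Frobenius to $cI - L_1^{(m)}$; this is exactly the argument underlying the cited lemma, so you gain self-containedness at no cost (just be explicit that within a strong nodal domain no edge is a sign-change edge, so the block really is $-w_{ij}$ off the diagonal and irreducible by connectedness). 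Second, your endgame via the monotone counting function $N(\sigma)=\#\{i:\lambda_i(\sigma)<\lambda_k\}$ sidesteps the paper's \cref{lemma:edge-lambda_k_crossings} (zero derivative forces a constant branch, so crossings have positive slope); since $N$ is insensitive to branch labeling and monotonicity alone prevents a branch from returning to $\lambda_k$ after exceeding it, this is a slightly cleaner way to separate the $\nu(\psi)$ branches that converge to $\lambda_k$ from the $k-\nu(\psi)$ that cross it. Both substitutions are sound; the rest of the argument matches the paper step for step.
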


	\begin{corollary}
		\label{corollary:edge-nodal_count}
		The nodal count satisfies $\nu(\psi) \leq k$, where $\psi$ is any eigenvector of $\lambda_k$.
	\end{corollary}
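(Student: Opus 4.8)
The plan is to read the bound off Theorem~\ref{theorem:edge-nodal_thm} once the monotonicity of the spectral flow is in hand. The essential input is that the perturbation $P = \sum_{(i,j)\in E_\pm} P_{ij}$ is positive semi-definite, which turns $\sigma \mapsto L_\sigma$ into a monotone family and forces each eigenvalue branch $\lambda_i(\sigma)$ to be non-decreasing in $\sigma$. Granting this, the number of branches that reach $\lambda_k$ from below is a genuine (non-negative) count, and since Theorem~\ref{theorem:edge-nodal_thm} identifies this count with the nodal deficiency $\delta(\psi) = k - \nu(\psi)$, non-negativity gives $\nu(\psi) \le k$ at once.

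First I would verify the positivity of $P$. For a sign-change edge $(i,j) \in E_\pm$ we have $\psi_i \psi_j < 0$, so $q_{ij} = -\psi_i/\psi_j$ and $q_{ji} = -\psi_j/\psi_i$ are both positive and satisfy $q_{ij} q_{ji} = 1$. Hence the block $\begin{pmatrix} q_{ji} & 1 \\ 1 & q_{ij}\end{pmatrix}$ has positive diagonal and determinant $q_{ij}q_{ji} - 1 = 0$, so each $P_{ij}$ is a rank-one positive semi-definite matrix; summing over $E_\pm$ keeps $P$ positive semi-definite. Consequently $B_\sigma(u,u) = \langle u, L_\sigma u\rangle$ is non-decreasing in $\sigma$ for every fixed $u$, and the Courant--Fischer min-max characterization makes each branch $\lambda_i(\sigma)$ non-decreasing.

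With monotonicity established I would count eigenvalues at the two ends of the flow. At $\sigma = 0$ we have $L_0 = L$, and Assumption~\ref{assump:multiplicity} places exactly $k-1$ eigenvalues strictly below $\lambda_k$; together with the branch carrying $\psi$ (which stays pinned at $\lambda_k$ for all $\sigma$, since a short computation on each sign-change block gives $P\psi = 0$), this produces exactly $k$ branches sitting at or below $\lambda_k$ when $\sigma = 0$. Because the branches only increase, any branch equal to $\lambda_k$ at $\sigma = 1$ must already have been $\le \lambda_k$ at $\sigma = 0$; hence the multiplicity of $\lambda_k$ in the spectrum of $L_1$ is at most $k$. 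By Theorem~\ref{theorem:edge-nodal_thm} this multiplicity is precisely $\nu(\psi)$, and the bound $\nu(\psi) \le k$ follows.

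The only delicate point is the bookkeeping around $\lambda_k$ itself, rather than any substantial new estimate: the positivity of $P$ and the identity $P\psi=0$ are elementary, and the genuine content has already been absorbed into Theorem~\ref{theorem:edge-nodal_thm}. The place to be careful is the simplicity of $\lambda_k$, since the count ``exactly $k$ branches at or below $\lambda_k$ at $\sigma = 0$'' uses that $\lambda_k$ contributes a single branch there; in the degenerate case of multiplicity $r$ the same argument yields only the weaker strong-nodal-domain bound $\nu(\psi) \le k + r - 1$ of Theorem~\ref{theorem:graph_nodal_counts}, consistent with the sharper count supplied by the flow.
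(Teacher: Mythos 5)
Your argument is correct and follows essentially the same route as the paper: the corollary is read off \cref{theorem:edge-nodal_thm} once one has positive semi-definiteness of $P$ (the content of \cref{lemma:edge-lambda_sigma_increasing}), the identity $P\psi=0$ (\cref{lemma:edge-lambda_k_constant}), and the observation that the $\nu(\psi)$ non-decreasing branches ending at $\lambda_k$ must start at or below it, of which at most $k$ are available. Regarding your closing caveat about multiplicity: the paper's \cref{lemma:edge-lambda_k_crossings} actually removes the loss, since any branch that is constant at the value $\lambda_k$ must carry an eigenvector proportional to $\psi$, so at most one of the $r$ branches starting at $\lambda_k$ can stay there while the remaining $r-1$ are strictly increasing and escape; hence the $\nu(\psi)$ limiting branches consist of the $\psi$-branch together with at most $k-1$ branches starting strictly below $\lambda_k$, giving $\nu(\psi)\le k$ rather than $k+r-1$ even in the degenerate case.
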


	\subsection{The nodal domain count and the edge-based spectral flow}
	\label{section:edge-based_flow-nodal_count}

	The proof of \cref{theorem:edge-nodal_thm} relies on a series of Lemmas that describe the behaviour of the eigenvalue and eigenvector branches. \cref{lemma:edge-lambda_k_constant,lemma:edge-lambda_sigma_increasing} are straightforward computations that show, respectively, the $\lambda_k$ branch is constant, and that the eigenvalue branches are non-decreasing in $\sigma$. \cref{lemma:edge-dirichlet_eigvecs} is the key conceptual and technical piece, which establishes that restricting $\psi$ to each of its nodal domains provides a basis for the $\lambda_k$ eigenspace of $L_1$. 

	\begin{lemma}
		\label{lemma:edge-lambda_k_constant}
		The eigenvalue $\lambda_k$ is in the spectrum of $L_\sigma$ for $0\leq \sigma \leq 1$, and $L_\sigma \psi = \lambda_k \psi$. In particular the $\lambda_k$ eigenvalue branch is constant.
	\end{lemma}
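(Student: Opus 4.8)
The plan is to reduce the entire claim to the single observation that $P\psi = 0$. Once this is established, $L_\sigma \psi = L\psi + \sigma P\psi = \lambda_k\psi$ for every $\sigma$, so $\psi$ is a $\lambda_k$-eigenvector of $L_\sigma$ along the whole flow; this proves both that $\lambda_k \in \spec(L_\sigma)$ for $0\le\sigma\le 1$ and that the eigenvalue branch through $\lambda_k$ never moves.

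First I would localize. Since $P = \sum_{(i,j)\in E_\pm} P_{ij}$, it suffices to verify $P_{ij}\psi = 0$ for each sign-change edge and then sum. Each $P_{ij}$ is supported only on the two rows and columns indexed by $i$ and $j$, so $P_{ij}\psi$ can be nonzero only in coordinates $i$ and $j$, and the computation collapses to a $2\times 2$ matrix-vector product.

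I would then evaluate those two coordinates. Using $q_{ij} = -\psi_i/\psi_j$ (and hence $q_{ji} = -\psi_j/\psi_i$), the $(i,i)$ and $(i,j)$ entries of $P_{ij}$ give
\[
(P_{ij}\psi)_i = w_{ij}\left(q_{ji}\psi_i + \psi_j\right) = w_{ij}\left(-\frac{\psi_j}{\psi_i}\psi_i + \psi_j\right) = 0,
\]
and the symmetric calculation yields $(P_{ij}\psi)_j = w_{ij}\left(\psi_i + q_{ij}\psi_j\right) = 0$. Here \cref{assump:non-zero} is exactly what keeps the ratios $q_{ij}$ well defined. Summing over $E_\pm$ gives $P\psi = 0$. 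To justify calling the level set of $\lambda_k$ an honest branch, I would also note that $L_\sigma$ is symmetric (each $P_{ij}$ is symmetric and $L$ is symmetric), so its eigenvalues are real and depend analytically on $\sigma$; since $\psi$ stays a $\lambda_k$-eigenvector, the branch is literally constant.

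There is no genuine obstacle here: this is the ``straightforward computation'' advertised in the text. The only point worth emphasizing is that the exotic diagonal weights $q_{ji},q_{ij}$ are precisely calibrated to force the two-term cancellation above --- this is the motivation alluded to after \cref{def:edge_based_flow} --- so I would present the kernel computation transparently, since it is the structural reason the rest of the construction functions.
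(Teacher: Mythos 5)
Your proposal is correct and follows essentially the same route as the paper: both reduce the claim to the kernel computation $P_{ij}\psi = 0$ on each sign-change edge, using the definition $q_{ij} = -\psi_i/\psi_j$ to force the two-term cancellation, and then conclude $L_\sigma\psi = L\psi = \lambda_k\psi$. The additional remark about symmetry and analyticity of the branches is harmless but not needed for the paper's argument.
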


	\begin{proof}
		By construction, $\psi$ is in the kernel of each $P_{ij}$: $$P_{ij} \psi = w_{ij}\pmat{q_{ji} \psi_i + \psi_j \\ \psi_i + q_{ij} \psi_j} = w_{ij} \pmat{ -\psi_j + \psi_j \\ \psi_i - \psi_i} = \pmat{0 \\ 0}.$$ Thus, $$L_\sigma \psi =L\psi + \sigma  P \psi =  L\psi= \lambda_k \psi.$$
	\end{proof}

	\begin{lemma}
		\label{lemma:edge-lambda_sigma_increasing}
		The eigenvalues of $B_\sigma$, which are the eigenvalues of the matrix $L_\sigma$, are non-decreasing eigenvalue branches in $\sigma$ for $0<\sigma <1$ .
	\end{lemma}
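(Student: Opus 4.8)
The plan is to reduce the claim to the positive semi-definiteness of the perturbation matrix $P$ and then invoke the Courant--Fischer min-max characterization of eigenvalues. Since $L_\sigma = L + \sigma P$ is real symmetric and depends affinely on $\sigma$, for each fixed unit vector $x$ the Rayleigh quotient $\langle x, L_\sigma x\rangle = \langle x, Lx\rangle + \sigma\langle x, Px\rangle$ is non-decreasing in $\sigma$ exactly when $\langle x, Px\rangle \geq 0$. If I can establish $P \succeq 0$, then writing the $m$th sorted eigenvalue as $\lambda_m(\sigma) = \min_{\dim S = m}\,\max_{x\in S,\,\|x\|=1}\langle x, L_\sigma x\rangle$ shows immediately that each $\lambda_m(\sigma)$ is non-decreasing, since the inner Rayleigh quotient increases pointwise in $\sigma$ and the min-max preserves this. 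This formulation has the advantage of sidestepping any difficulty at eigenvalue crossings, since the sorted eigenvalues are monotone regardless of multiplicity.

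So the crux is to verify that $P = \sum_{(i,j)\in E_\pm} P_{ij} \succeq 0$. Because a sum of positive semi-definite matrices is again positive semi-definite, it suffices to check that each summand $P_{ij}$, viewed as a $|V|\times|V|$ matrix supported on the $2\times 2$ block indexed by $i,j$, is PSD; and for this I only need the block $w_{ij}\pmat{q_{ji} & 1 \\ 1 & q_{ij}}$ itself to be PSD, since $\langle x, P_{ij}x\rangle$ depends only on the coordinates $x_i, x_j$ and embedding a PSD block into $\R^{|V|}$ preserves PSD-ness. Here the sign-change hypothesis enters decisively: for $(i,j)\in E_\pm$ we have $\psi_i\psi_j<0$, so $q_{ij} = -\psi_i/\psi_j$ and $q_{ji} = -\psi_j/\psi_i$ are both strictly positive. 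The block then has non-negative trace $w_{ij}(q_{ij}+q_{ji})$ and determinant $w_{ij}^2(q_{ij}q_{ji}-1) = 0$, using the identity $q_{ij}q_{ji}=1$; a symmetric $2\times 2$ matrix with non-negative trace and vanishing determinant has eigenvalues $0$ and its trace, hence is PSD.

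The step I expect to require the most care is precisely this verification that each block is PSD using the sign-change condition, where one must track the signs so that $q_{ij}, q_{ji}>0$; this is exactly the point at which the definition of $E_\pm$ and the particular form of $P_{ij}$ are used. The remaining ingredients---that a sum of PSD matrices is PSD, and the min-max monotonicity---are routine. I would also note that, because each block is only PSD and not positive definite, the branches are non-decreasing but need not be strictly increasing; this is consistent with \cref{lemma:edge-lambda_k_constant}, where the $\lambda_k$ branch stays constant precisely because $\psi$ lies in the kernel of $P$.
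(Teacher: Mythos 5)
Your proof is correct, but it takes a genuinely different route from the paper. The paper's argument is first-order perturbation theory: it notes the eigenvalue branches are analytic in $\sigma$, differentiates the eigenvalue equation to obtain the Hadamard-type variation formula $\lambda' = B_\sigma'(u,u) = \langle u, Pu\rangle$, and then shows this is non-negative by completing the square, $\langle u, P_{ij}u\rangle = w_{ij}(\sqrt{q_{ji}}\,u_i + \sqrt{q_{ij}}\,u_j)^2 \geq 0$, which uses $q_{ij}q_{ji}=1$ and $q_{ij},q_{ji}>0$ exactly as your trace/determinant computation does. You instead establish $P \succeq 0$ directly (each block is rank one with non-negative trace) and invoke Courant--Fischer monotonicity of the sorted eigenvalues. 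Both proofs ultimately rest on the same fact --- positive semi-definiteness of each $P_{ij}$ block, forced by the sign-change condition --- but they buy different things. Your min-max route is more elementary, avoids invoking analyticity of the branches, and is insensitive to crossings; one small caveat is that it directly gives monotonicity of the \emph{sorted} eigenvalues rather than of the analytic branches the paper works with, though the two statements are equivalent here since branches coincide with sorted eigenvalues off a discrete set of crossing points and are continuous. The paper's route yields as a byproduct the explicit identity $\lambda' = \sum_{(i,j)\in E_\pm} w_{ij}(\sqrt{q_{ji}}\,u_i + \sqrt{q_{ij}}\,u_j)^2$, which is not merely decorative: it is reused verbatim in \cref{lemma:edge-lambda_k_crossings} to show that $\lambda'=0$ forces $Pu=0$ and $u_i = \frac{\psi_i}{\psi_j}u_j$ across sign-change edges. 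If one adopted your proof, that later lemma would need the derivative formula (or an equivalent kernel characterization of $P$) supplied separately.
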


	\begin{proof}
		Suppose $(\lambda, u) = (\lambda_\sigma, u_\sigma)$ is an eigenvalue/eigenvector pair of $B_\sigma$ with $\langle u, u\rangle = 1$, so that $$B_\sigma(u,v) = \lambda \langle u, v\rangle \quad \forall v \in \R^{|V|}.$$ Each $\lambda_\sigma$ is an analytic curve in $\sigma$, branching from the eigenvalue/eigenvector pairs of $L_0 = L$; this follows from standard perturbation theory \cite{Ka:PertBook}. Differentiating with respect to $\sigma$ gives
		\begin{align*}
			B_\sigma(u,v)' = B_\sigma'(u,v) + B_\sigma(u',v), \text{ and } 
			(\lambda \langle u, v \rangle)' 	= \lambda' \langle u, v\rangle + \lambda \langle u', v\rangle.
		\end{align*}
		
		By the variational formulation for eigenvalues we must have $B_\sigma(u,u') = \lambda \langle u, u'\rangle$, and so
		\begin{align*}
			\lambda' \langle u, u\rangle + \lambda \langle u', u\rangle = B_\sigma'(u,u) + B_\sigma(u',u),
		\end{align*}
		which in turn gives 
		\begin{align*}
			\lambda' = B_\sigma'(u,u) = \langle u, L_{\sigma}' u \rangle = \left\langle u, P u \right\rangle.
		\end{align*}
	
		Now $\langle u, P_{ij} u\rangle = (\sqrt{q_{ji}} u_i + \sqrt{q_{ij}}u_j)^2 \geq 0$, so $\left\langle u, P u \right\rangle \geq 0$ and $\lambda' \geq 0$ as desired.
	\end{proof}

	Let $L_\psi$ be the graph Laplacian of $G_\psi$, i.e. $L_\psi = D - A$ with $A = (w_{\psi,ij})_{(i,j)\in E\setminus E_\pm}$ and $D = (\sum_{(i,j)\in E} w_{ij})_{ii}$. As mentioned in the definition of $G_\psi$, $L_\psi$ is effectively a Dirichlet graph Laplacian for $G_\psi$ for which the Dirichlet boundary condition is imposed across the edges in $E_\pm$. Note that $L_\psi = L_1$, which can be seen by writing out the entries of each matrix explicitly. By construction $G_\psi$ consists of $\nu(\psi)$ connected components, so we write $G_\psi = G_1 \cup G_2 \cup \cdots \cup G_{\nu(\psi)}$ where $G_i \cap G_j = \emptyset$ for $i\neq j$. The next lemma describes the spectrum of $L_\psi$ through the eigenvectors of $L_\psi$ restricted to each $G_i$. 

	\begin{lemma}
		\label{lemma:edge-dirichlet_eigvecs}
		The spectrum $0 < \lambda^\psi_1 \leq \lambda^\psi_2 \leq \cdots \leq \lambda^\psi_{|V|}$ of $L_\psi$ consists of:
		\begin{enumerate}
			\item $\lambda^{\psi}_1 = \cdots = \lambda^{\psi}_{\nu(\psi)} = \lambda_k$, and
			\item $\lambda^{\psi}_{\nu(\psi)+1} > \lambda_k$.
		\end{enumerate}
		
		Restricting $\psi$ to each nodal domain $G_i$ gives a signed eigenvector of $\lambda^{\psi}_1$, so the eigenspace of $L_\psi$ for $\lambda^{\psi}_1$ is the span of $\psi|_{G_1}, ..., \psi|_{G_{\nu(\psi)}}$. Moreover, eigenvectors of higher eigenvalues must be signed on each connected component of $G_\psi$.
	\end{lemma}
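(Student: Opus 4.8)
The plan is to exploit the fact that $L_\psi = L_1$ is block diagonal across the connected components of $G_\psi$, and to reduce the whole statement to a Perron--Frobenius analysis on each block. First I would record the entries of $L_1 = L + P$: on an edge $(i,j)\in E\setminus E_\pm$ the off-diagonal entry is $-w_{ij}$ (since the $P_{ij}$ are supported on $E_\pm$), while on a sign-change edge $(i,j)\in E_\pm$ the off-diagonal entry of the block $P_{ij}$ equals $w_{ij}\cdot 1 = w_{ij}$, which exactly cancels the $-w_{ij}$ coming from $L$. Hence $L_\psi$ has no nonzero entry linking two distinct components of $G_\psi$, so with respect to $G_\psi = G_1\cup\cdots\cup G_{\nu(\psi)}$ we may write $L_\psi = \bigoplus_i L_{G_i}$, where each $L_{G_i}$ is an irreducible (since $G_i$ is connected) symmetric matrix with nonpositive off-diagonal entries, i.e. a generalized graph Laplacian of Dirichlet type.

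Next I would transport the eigenvector across this decomposition. By \cref{lemma:edge-lambda_k_constant} we have $L_\psi\psi = \lambda_k\psi$, and block diagonality decouples this into $L_{G_i}(\psi|_{G_i}) = \lambda_k\,\psi|_{G_i}$ for each $i$. By \cref{assump:non-zero} the restriction $\psi|_{G_i}$ is nowhere zero, and since $G_i$ contains no sign-change edge, $\psi$ keeps a single constant sign throughout $G_i$. Thus each $\psi|_{G_i}$ is a nowhere-vanishing, sign-definite eigenvector of the block $L_{G_i}$ for the eigenvalue $\lambda_k$.

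The key step, which is the graph analogue of the Courant corollary \cref{proposition:dirichlet_eigenfunctions}, is a discrete Perron--Frobenius argument. For $C$ large, $C I - L_{G_i}$ is a nonnegative irreducible matrix, so its spectral radius is a simple eigenvalue with a strictly positive eigenvector; translating back, the smallest eigenvalue of $L_{G_i}$ is simple and is the unique eigenvalue admitting a sign-definite eigenvector. Because $\psi|_{G_i}$ is exactly such an eigenvector, $\lambda_k$ must be the smallest eigenvalue of $L_{G_i}$, and it is simple within that block. Summing over the $\nu(\psi)$ blocks then yields that $\lambda_k$ is the bottom eigenvalue of $L_\psi$ with multiplicity exactly $\nu(\psi)$, with eigenspace the span of $\psi|_{G_1},\dots,\psi|_{G_{\nu(\psi)}}$, and that $\lambda^\psi_{\nu(\psi)+1} > \lambda_k$. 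Strict positivity $\lambda^\psi_1 = \lambda_k > 0$ follows since $E_\pm\neq\emptyset$ forces $\psi$ to change sign, hence $k\ge 2$, and $\lambda_k > 0$ because $\lambda_1 = 0$ is the simple bottom eigenvalue of the connected graph $G$.

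Finally, for a higher eigenvalue $\mu > \lambda_k$, any eigenvector $v$ is orthogonal to the bottom eigenspace, hence to each $\psi|_{G_i}$; restricted to $G_i$ it is either zero or an eigenvector of $L_{G_i}$ for $\mu$, and being orthogonal to the sign-definite Perron vector $\psi|_{G_i}$ it must take both signs on $G_i$, which is the last assertion. The main obstacle is precisely this Perron--Frobenius step: once the block decomposition and the sign pattern of the entries (nonpositive off-diagonal, irreducible) are in hand, the rest is bookkeeping, but it is the simplicity of the bottom eigenvalue together with its characterization as the only eigenvalue carrying a sign-definite eigenvector that pins $\lambda_k$ to the bottom of each block with the correct multiplicity.
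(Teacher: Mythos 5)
Your proof is correct and follows the same overall strategy as the paper's --- decompose $L_\psi=L_1$ along the nodal domains, show the restrictions $\psi|_{G_i}$ are eigenvectors, and pin $\lambda_k$ to the bottom of each block via simplicity and sign-definiteness of the first Dirichlet eigenvalue --- but you supply the two key ingredients differently. Where the paper verifies $L_\psi\psi|_{G_i}=\lambda_k\psi|_{G_i}$ by an explicit entrywise computation, you first observe that the off-diagonal entries of $P_{ij}$ exactly cancel those of $L$ on $E_\pm$, so that $L_\psi=\bigoplus_i L_{G_i}$, and then read off the restriction property from \cref{lemma:edge-lambda_k_constant} and block-diagonality; and where the paper cites \cite[Lemma 6.1]{BiLeSt:EigvecBook} for the simplicity and sign-definiteness of the bottom Dirichlet eigenvalue of a connected component, you prove it directly by applying Perron--Frobenius to $CI-L_{G_i}$. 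Your version is more self-contained and makes the mechanism transparent (irreducibility plus nonpositive off-diagonal entries is all that is used), at the cost of restating a standard fact; the paper's version is shorter but leans on the external reference. Two small points in your favor: you correctly note that a higher eigenvector may vanish identically on some component (so "must change sign on each component" should be read as "changes sign on each component where it is nonzero"), and you make explicit that the strict positivity $\lambda^\psi_1>0$ rests on $E_\pm\neq\emptyset$, i.e. $k\ge 2$, which the lemma assumes implicitly.
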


	This result is a direct graph analogue of the theorem for Dirichlet eigenvalues for the Laplacian acting on a connected, bounded domain; see \cite[\S 1.5, Corollary 2]{Ch:EigBook}. See also \cref{lemma:vertex-first_eigvals_D-connected,lemma:vertex-Dirichlet_eigenvectors} for an extended discussion of Dirichlet eigenvalues in the graph setting.
	
	\begin{proof}
		Claim 1. is presented in \cite[Lemma 6.1]{BiLeSt:EigvecBook} using the Dirichlet eigenvalue framework, with their $V^\circ$ corresponding to each of our connected components $G_i$, and their vertex boundary $\partial V$ corresponding to the sets $\{l \colon \exists j \text{ s.t. } (m,n)\in E_\pm \text{ and } m \in G_i \}$ in our case. Since the first Dirichlet eigenvalue of each $G_i$ is simple, and there are $\nu(\psi)$ such $G_i$, we must have $\lambda^{\psi}_{\nu(\psi)+1} > \lambda^\psi_1$.
		
		To produce $\nu(\psi)$ explicit eigenvectors for $\lambda^\psi_1$, we can restrict $\psi$ to each $G_i$. Let $\psi|_{G_i}$ denote the vector $\psi$ with entries on $G_i^c$ all zero. Then a straightforward computation gives, for each $j \in G_i$,
		
		\begin{align*}
			(L_\psi \psi|_{G_i})_j &= \sum_{(j,m) \in E} w_{\psi,jm} \psi_j - \sum_{(j,m) \in E\setminus E_\pm} w_{\psi,jm}\psi_m\\
					&= \sum_{(j,m) \in E_\pm} w_{\psi,jm} \psi_j +  \sum_{(j,m) \in E\setminus E_\pm} w_{\psi,jm} (\psi_j - \psi_m)\\
					&= \sum_{(j,m) \in E_\pm} w_{jm} (1+q_{mj}) \psi_j +  \sum_{(j,m) \in E\setminus E_\pm} w_{jm} (\psi_j - \psi_m)\\
					&= \sum_{(j,m) \in E_\pm} w_{jm} (\psi_j - \psi_m)+  \sum_{(j,m) \in E\setminus E_\pm} w_{jm} (\psi_j - \psi_m)\\
					&= \sum_{(j,m) \in E} w_{jm} (\psi_j - \psi_m)\\
					&= \lambda_k \psi_j,
		\end{align*}
		since $\psi$ is an eigenvector of $L$. Thus $L_\psi \psi|_{G_i} = \lambda_k \psi|_{G_i} = \lambda^{\psi}_1 \psi|_{G_i} $, and since each $\lambda^{\psi}_1$ is simple, eigenvectors of higher eigenvalues must be orthogonal to each $\psi|_{G_i}$ and hence must be signed.
	\end{proof}
	
	\begin{lemma}
		\label{lemma:edge-lambda_k_crossings}
		Let $(\lambda_\sigma, u)$ be an eigenvalue/eigenvector pair of $L_\sigma$ for $0\leq \sigma \leq 1$, where $u$ depends on $\sigma$. If $\lambda_{\sigma^*}' = 0$ for some $\sigma^*$ then $\lambda_\sigma$ is constant and in the spectrum of $L_1$. Moreover if $\lambda_{\sigma^*} = \lambda_k$ then we also have that $u$ is a multiple of $\psi$.
	\end{lemma}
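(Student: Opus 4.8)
The plan is to trace the vanishing of $\lambda_{\sigma^*}'$ back to the kernel of $P$ and then propagate it along the entire branch. From the proof of \cref{lemma:edge-lambda_sigma_increasing} I have the pointwise identity
$$\lambda_\sigma' = \langle u, Pu\rangle = \sum_{(i,j)\in E_\pm}\left(\sqrt{q_{ji}}\,u_i + \sqrt{q_{ij}}\,u_j\right)^2,$$
in which each summand is a genuine square because $q_{ij},q_{ji}>0$ on sign-change edges. Hence $\lambda_{\sigma^*}'=0$ forces every summand to vanish at $\sigma^*$; since $q_{ij}q_{ji}=1$ gives $1/\sqrt{q_{ji}}=\sqrt{q_{ij}}$, the vanishing of $\sqrt{q_{ji}}u_i+\sqrt{q_{ij}}u_j$ is precisely the statement $P_{ij}u=0$ for each edge, so $Pu=0$ at $\sigma^*$. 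This is the first step, and the only place where the explicit form of $P$ is used.

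Next I would strip the perturbation off the eigenvalue equation. Writing $v$ for the eigenvector at $\sigma^*$, the relation $L_{\sigma^*}v=\lambda_{\sigma^*}v$ together with $Pv=0$ gives $Lv=\lambda_{\sigma^*}v$, and therefore $L_\sigma v = Lv+\sigma Pv=\lambda_{\sigma^*}v$ for \emph{every} $\sigma$. Thus $v$ is a common eigenvector of the whole family $L_\sigma$ with the fixed eigenvalue $\lambda_{\sigma^*}$; in particular $\lambda_{\sigma^*}\in\spec(L_1)$, which already disposes of the spectrum assertion.

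The substantive step is to upgrade ``$\lambda_{\sigma^*}$ is an eigenvalue of every $L_\sigma$, with a fixed eigenvector $v$'' to ``the given branch $\lambda_\sigma$ is constant.'' Here I would invoke the analyticity of the branch (the same Kato theory used in \cref{lemma:edge-lambda_sigma_increasing}) and argue through the perturbation expansion about $\sigma^*$. Writing $\sigma=\sigma^*+\tau$, $u(\tau)=v+\tau u_1+\tau^2u_2+\cdots$, and $\lambda(\tau)=\lambda_{\sigma^*}+c_1\tau+c_2\tau^2+\cdots$ with $c_1=\lambda_{\sigma^*}'=0$, the order-$\tau^m$ term of $(L_{\sigma^*}+\tau P)u=\lambda u$ reads $(L_{\sigma^*}-\lambda_{\sigma^*})u_m=c_m v-Pu_{m-1}$ once the lower coefficients are known to vanish by induction. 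Pairing with $v$ and using self-adjointness of $L_{\sigma^*}$ together with $Pv=0$ annihilates the right-hand side except for $c_m\langle v,v\rangle$, forcing $c_m=0$. Since this holds for every $m\ge 1$, the analytic function $\lambda_\sigma$ is identically $\lambda_{\sigma^*}$ on $[0,1]$.

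I expect this last step to be the main obstacle, precisely because $\lambda_\sigma'=\langle u,Pu\rangle$ is a non-negative analytic function, and such a function can vanish at an isolated point without being identically zero — so the monotonicity of \cref{lemma:edge-lambda_sigma_increasing} alone does not give constancy. The operator-level input $Pv=0$, fed through the Fredholm solvability condition, is what rules this out, and it does so even if $\lambda_{\sigma^*}$ is a multiple eigenvalue of $L_{\sigma^*}$, since the condition is pulled back only against the distinguished eigenvector $v\in\ker P$. Finally, for the ``moreover'' clause, when $\lambda_{\sigma^*}=\lambda_k$ the vector $v$ lies in $\ker P\cap\ker(L-\lambda_k)$; as $\lambda_k$ is simple, $\ker(L-\lambda_k)=\R\psi$ (and $\psi\in\ker P$ by \cref{lemma:edge-lambda_k_constant}), so $u$ is forced to be a scalar multiple of $\psi$.
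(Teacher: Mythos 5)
Your proof is correct and, for the main claim, follows the same skeleton as the paper: $\lambda_{\sigma^*}'=0$ forces each square in $\langle u,Pu\rangle$ to vanish, hence $Pu=0$ at $\sigma^*$, hence $u$ is a simultaneous eigenvector of every $L_\sigma = L+\sigma P$ with the fixed eigenvalue $\lambda_{\sigma^*}$, and analyticity of the branch then gives constancy. Where you add value is the constancy step: the paper simply asserts that $\lambda_\sigma$ is constant on $[0,\sigma^*]$ and appeals to analyticity, while your induction on the perturbation series (pairing the order-$\tau^m$ equation against $v$ and using symmetry of $L_{\sigma^*}$ and $Pv=0$ to kill $c_m$) actually justifies why the distinguished branch through $(\sigma^*,v)$ has all derivatives vanishing, which is the honest way to rule out a non-negative analytic derivative with an isolated zero. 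Where the two arguments genuinely diverge is the ``moreover'' clause. You conclude $u\in\ker P\cap\ker(L-\lambda_k)=\R\psi$ by invoking simplicity of $\lambda_k$ as an eigenvalue of $L$; that hypothesis appears in \cref{theorem:main_theorem} but is \emph{not} among the standing assumptions of this section --- \cref{assump:multiplicity} explicitly allows $\lambda_k$ to have multiplicity greater than one. The paper instead notes that $u$ (being a $\lambda_k$-eigenvector of $L_1$) lies in the span of the restrictions $\psi|_{G_i}$ by \cref{lemma:edge-dirichlet_eigvecs}, and then uses the relations $u_i=\frac{\psi_i}{\psi_j}u_j$ across sign-change edges together with connectedness of $G$ to propagate a single scalar $\alpha$ from one nodal domain to all others, yielding $u=\alpha\psi$ without any simplicity assumption on $\lambda_k$. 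Your shortcut is fine in the setting of the main theorem, but if the lemma is to be used under \cref{assump:multiplicity} alone you should replace it with (or supplement it by) the propagation argument.
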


	In practice, \cref{lemma:edge-lambda_k_crossings} is used to show that eigenvalue branches that cross $\lambda_k$ must cross with a positive slope, and hence limit to an eigenvalue strictly larger than $\lambda_k$.

	\begin{proof}
		Recall that $$\lambda' = \langle u, P u \rangle = \sum_{(i,j) \in E_\pm} w_{ij}(\sqrt{q_{ji}} u_i + \sqrt{q_{ij}} u_j)^2.$$ If $\lambda'=0$ then $u_i = \frac{\psi_i}{\psi_j} u_j$ for each $(i,j)\in E_\pm$, and $Pu = 0$. But then $L_{\sigma^*} u = Lu = \lambda_{\sigma^*} u$, so $\lambda_{\sigma^*}$ is in the spectrum of $L$ with $u$ a corresponding eigenvector. Thus $\lambda_{\sigma}$ is constant on the interval $[0,\sigma^*]$, and since these eigenvalue branches are analytic, $\lambda_\sigma$ is constant and in the spectrum of $L_1$.
		
		If moreover $\lambda_{\sigma^*} = \lambda_k$, by \cref{lemma:edge-dirichlet_eigvecs} $u$ is a linear combination of the restrictions $\psi|_{G_k}$ with $G_k$ a nodal domain of $\psi$. For a fixed $G_k$, we can find a constant $\alpha$ such that $u|_{G_k} = \alpha \psi|_{G_k}$. The condition $u_i = \frac{\psi_i}{\psi_j} u_j$ for each $(i,j)\in E_{\pm}$ shows $u|_{G_l} = \alpha \psi|_{G_l}$ whenever $G_l$ and $G_k$ are connected by a sign-change edge. Since $G$ is connected, $u = \alpha \psi$ on all of $G$.
	\end{proof}

	With the pieces all in place, the proof of \cref{theorem:edge-nodal_thm} is straightforward.

	\begin{proof}[Proof of \cref{theorem:edge-nodal_thm}]
		By \cref{lemma:edge-lambda_sigma_increasing} the eigenvalue branches of $L_\sigma$ are non-decreasing, and so are either constant or strictly increasing by \cref{lemma:edge-lambda_k_crossings}. \cref{lemma:edge-dirichlet_eigvecs} tells us that precisely $\nu(\psi)$ eigenvalue branches of $L_\sigma$ converge to $\lambda_k$, so $\delta(\psi) = k - \nu(\psi)$ of the eigenvalues below $\lambda_k$ will cross $\lambda_k$ with positive slope and hence converge to eigenvalues strictly greater then $\lambda_k$.
	\end{proof}

	\cref{theorem:edge-nodal_thm} suggests a means to compute nodal domains and nodal deficiencies: given $\psi$ with $L\psi = \lambda_k \psi$, construct $L_1 = L + \sum_{(i,j)\in E_{\pm}} P_{ij}$ and compute the multiplicity of $\lambda_k$ in the spectrum of $L_1$. While this is sufficient for applications to data analysis, we are also interested in consistency aspects of the graph spectral flow. In particular suppose $X_n$ is a point cloud of $n$ points sampled from a manifold $M$ with respect to a measure $\mu$, and $G_{n,\epsilon}$ is a geometric graph with edges between points that are at most distance $\epsilon$ apart. A natural question is whether our graph spectral flow converges to the continuum spectral flow appearing in \cref{theorem:continuum_flow}. One immediate concern is the parameter range: the graph spectral flow is defined for $\sigma \in [0,1]$, while the continuum spectral flow is defined for $\sigma \in [0,\infty)$. The consistency of this graph spectral flow to the continuum version is the subject of future work, and is motivated by the vertex-based spectral flow constructed next.

	\section{The vertex-based spectral flow}
	\label{section:vertex-based_flow}
	
	In this section we outline a vertex-based graph spectral flow. This flow has the same properties as the edge-based flow, but relies on ``ghost vertices'' and ``ghost edges'' added to the graph. These ``ghosts'' sit where effective zeros should be expected to be found along the sign-change edges. This approach is useful for two reasons:
	\begin{enumerate}
		\item basis vectors for the Dirichlet Laplacian's first eigenvalue originate as indicator functions of ghost points, suggesting an interesting interplay between an eigenvector's zeros and nodal domains;
		\item incorporating the zeros of an eigenvector as vertices of the graph makes establishing consistency of the flow more straightforward, in part because the limit in $\sigma$ is taken to $\infty$ instead of $1$.
	\end{enumerate} 

	In \cref{section:vertex-based_flow-construction} we outline the vertex-based graph spectral flow and state the analogous results to the edge-based flow. This flow relies on a new graph we call the $\psi$-subdivision, where $\psi$ is a Laplace eigenvector. The following subsection makes explicit the relation between the vertex-based flow and graph Laplacians with Dirichlet boundary conditions, while the last section connects this flow to the edge-based flow discussed above.
	
	\subsection{The construction and properties}
	\label{section:vertex-based_flow-construction}
	
	\begin{defn}
		\label{def:vertex-psi_subdivision}
		Given an eigenvector $\psi$ of the graph Laplacian we define
		\begin{itemize}
			\item the \textbf{sign-change edges} $E_\pm \subset E$ as those edges $(i,j)$ such that $\psi_i \psi_j < 0$;
			\item the \textbf{ghost vertices} $V_{gh} = \{0_{ij} \colon (i,j) \in E_\pm \}.$
		\end{itemize}
		
		The $\psi$\textbf{-subdivision graph} $G_{\psi,\sigma}$ of $G$ is the new graph $$G_{\psi,\sigma} = (V_\psi, E_{\psi},w_{\psi,\sigma}),$$ depending on a parameter $\sigma \in [0,\infty)$, with
		\begin{itemize}
			\item $V_\psi := V \cup  V_{gh}$,
			\item $E_\psi := E \cup \{ (i, 0_{ij}), (0_{ij},j) \}_{(i,j) \in E_0}$, and
			\item $w_{\psi, \sigma}(e) = \begin{cases} w(e), & e \in E\setminus E_\pm,\\ \frac{1}{1+\sigma} w(e), & e\in E_\pm,\\ \frac{\sigma}{1+\sigma} w(\tilde{e}) (1+q_{ji}), & e = (i,0_{ij}), \tilde{e} = (i,j), \, q_{ji} := \frac{-\psi_j}{\psi_i}>0. \end{cases}$
		\end{itemize}	
		Finally, we write $L_{\psi,\sigma}$ for the graph Laplacian of $G_{\psi,\sigma}$. 
	\end{defn}

	The idea behind the $\psi$-subdivision graph is to add vertices and edges that explicitly incorporate the zeros of $\psi$ into the graph structure. \cref{pic:K2GPsi} shows the subdivision process for the complete graph on 2 vertices, $K_2$, with $\psi = (1,-1)$. A new ghost vertex $0_{12}$ is added halfway between vertices 1 and 2 approximately where a zero on the edge (1,2) would occur. The edge weights are chosen so that as $\sigma \to \infty$, the original edge (1,2) dissappears and the edges adjacent to $0_{12}$ have the correct edge-weights to impose Dirichlet boundary conditions on each nodal domain of $\psi$.
	
	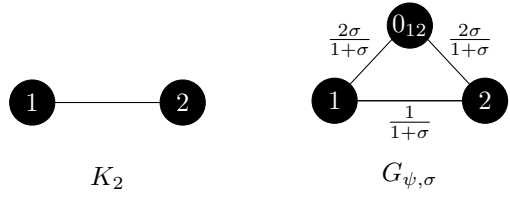
\begin{figure}
		\centering
		\begin{tikzpicture}
			\draw (0,0) -- (2,0);
			\draw [black, fill = black] (0,0) circle [radius = 0.3];
			\draw [black, fill = black] (2,0) circle [radius = 0.3];
			
			\node [white] at (0,0) {$1$};
			\node [white] at (2,0) {$2$};
			\node at (1,-1) {$K_2$};
		\end{tikzpicture}\qquad\qquad
		\begin{tikzpicture}
			\draw (0,0) -- (2,0);
			\draw [rounded corners] (0,0) -- (2,0) -- (1,1.07) -- (0,0);
			\draw [black, fill = black] (0,0) circle [radius = 0.3];
			\draw [black, fill = black] (2,0) circle [radius = 0.3];
			\draw [black, fill = black] (1,1) circle [radius = 0.3];
			
			\node [white] at (0,0) {$1$};
			\node [white] at (2,0.) {$2$};
			\node [white] at (1.,1) {$0_{12}$};
			\node at (1,-1) {$G_{\psi,\sigma}$};
			
			\node at (1.0,-0.3) {$\frac{1}{1+\sigma}$};
			\node at (0.2,1-0.2) {$\frac{2\sigma}{1+\sigma}$};
			\node at (1.8,1-0.2) {$\frac{2\sigma}{1+\sigma}$};
		\end{tikzpicture}
		
		\caption{$K_2$ and its subdivision graph $G_{\psi,\sigma}$ for $\psi=(1,-1)$. All unmarked edges have edge weight 1.}
		\label{pic:K2GPsi}
	\end{figure}

	Note that $L_{\psi,0}$ is the $(|V|+|V_{gh}|)\times (|V| + |V_{gh}|)$ matrix with $L$ in the $|V|\times |V|$ upper-left block and zeros elsewhere.
	
	Our first result shows that if $\psi$ is an eigenvector of $L$, then $\psi$ is also an eigenvector of $L_{\psi,\sigma}$. To make this precise, we need to extend the vector $\psi\in \R^{|V|}$ to a vector $\tilde{\psi}\in \R^{|V| + |V_{gh}|}$ and show that $L_{\psi,\sigma} \tilde{\psi} = \lambda_k \tilde{\psi}$. By construction of $G_{\psi,\sigma}$ we expect $\tilde{\psi}|_{V_{gh}} = 0$, though we are also interested in extending other vectors $u\in \R^{|V|}$ to vectors $\tilde{u} \in \R^{|V| + |V_{gh}|}$.

	\begin{defn}
		\label{def:vertex-extension}
		A vector $f\in \R^{|V|}$, interpreted as a function on $G$, can be extended to $\tilde{f} \in \R^{|V| + |V_{gh}|}$, interpreted as a function on $G_{\psi,\sigma}$, by setting $\tilde{f}_i = f_i$ for $i\in V$, and $\tilde{f}_{0_{ij}} = a_{ij} f_i + a_{ji} f_j$ for $0_{ij}\in V_{gh}$ with $a_{ij} = \frac{1}{1+q_{ij}}.$
	\end{defn}  
	Note that $\tilde{\psi}_{0_{ij}} = a_{ij} \psi_i + a_{ji} \psi_j	= \frac{\psi_i\psi_j}{\psi_j - \psi_i} + \frac{\psi_i\psi_j}{\psi_i - \psi_j} = 0,$ as desired.

	\begin{lemma}
		\label{lemma:vertex-lambda_k_constant}
		Suppose $(\lambda_k,\psi)$ is an eigenvalue/eigenvector pair for the graph $G$, i.e. $L \psi = \lambda_k \psi$. Then $L_{\psi, \sigma} \tilde{\psi} = \lambda_k \tilde{\psi}$ for all $\sigma$.
	\end{lemma}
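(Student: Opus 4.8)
The plan is to verify the eigenvalue equation $L_{\psi,\sigma}\tilde{\psi} = \lambda_k \tilde{\psi}$ one vertex at a time, splitting $V_\psi = V \cup V_{gh}$ into the ghost vertices and the original vertices. The entire computation rests on a single algebraic identity forced by the definition $q_{ji} = -\psi_j/\psi_i$: namely $(1+q_{ji})\psi_i = \psi_i - \psi_j$, together with its mirror image $(1+q_{ij})\psi_j = \psi_j - \psi_i$. I would record this identity first, since both cases collapse onto it. I would also reuse the fact, already noted after \cref{def:vertex-extension}, that $\tilde{\psi}_{0_{ij}} = 0$.

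First I would dispatch the ghost vertices. Fix $0_{ij}\in V_{gh}$; its only neighbours in $G_{\psi,\sigma}$ are $i$ and $j$, with edge weights $\frac{\sigma}{1+\sigma}w_{ij}(1+q_{ji})$ and $\frac{\sigma}{1+\sigma}w_{ij}(1+q_{ij})$ respectively. Writing out $(L_{\psi,\sigma}\tilde{\psi})_{0_{ij}}$ and inserting $\tilde{\psi}_{0_{ij}} = 0$, the expression reduces to $-\frac{\sigma}{1+\sigma}w_{ij}\bigl[(1+q_{ji})\psi_i + (1+q_{ij})\psi_j\bigr]$. By the identity the bracket equals $(\psi_i-\psi_j) + (\psi_j-\psi_i) = 0$, so the ghost-vertex equation reads $0 = \lambda_k \tilde{\psi}_{0_{ij}}$, which holds.

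Next I would treat an original vertex $i\in V$. Its neighbours are of three kinds: endpoints of genuine (non-sign-change) edges, endpoints of sign-change edges now carrying the damped weight $\frac{1}{1+\sigma}w_{ij}$, and the ghost vertices $0_{ij}$ attached along each sign-change edge. The contributions from non-sign-change edges are verbatim those appearing in $(L\psi)_i$. For each sign-change edge I would combine the damped direct term with the ghost term: using $\tilde{\psi}_{0_{ij}} = 0$ and $(1+q_{ji})\psi_i = \psi_i - \psi_j$, the ghost contribution $\frac{\sigma}{1+\sigma}w_{ij}(1+q_{ji})\psi_i$ becomes $\frac{\sigma}{1+\sigma}w_{ij}(\psi_i-\psi_j)$, which adds to the damped direct contribution $\frac{1}{1+\sigma}w_{ij}(\psi_i-\psi_j)$ to give exactly $w_{ij}(\psi_i-\psi_j)$. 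Summing over all neighbours then recovers $(L\psi)_i = \lambda_k\psi_i = \lambda_k\tilde{\psi}_i$, finishing the computation.

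There is no genuine obstacle here; the difficulty is purely bookkeeping. The weights are engineered precisely so that the damping factor $\frac{1}{1+\sigma}$ on the direct edge and the factor $\frac{\sigma}{1+\sigma}$ on the ghost edge sum to $1$, which is why the $\sigma$-dependence cancels at every original vertex and the conclusion holds for all $\sigma$ rather than only in the limit. The one point that must be handled carefully is the orientation convention on the ghost edges, since the weight of $(i,0_{ij})$ uses $q_{ji}$ while that of $(0_{ij},j)$ uses $q_{ij}$; interchanging these would break the telescoping in the ghost-vertex case.
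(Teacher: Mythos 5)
Your proof is correct and is essentially identical to the paper's: both verify the eigenvalue equation vertex by vertex, using $(1+q_{ji})\psi_i = \psi_i - \psi_j$ to recombine the damped direct edge with the ghost edge at original vertices and to cancel the two contributions at each ghost vertex. The only difference is the order in which the two cases are treated, which is immaterial.
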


	\begin{proof}
		This is a straightforward computation. Because $\psi$ is an eigenvector with eigenvalue $\lambda_k$, we have
		\begin{align*}
			(L \psi)_i 	&= \sum_{(i,j)\in E} w_{ij} (\psi_i - \psi_j) = \lambda_k \psi_i.
		\end{align*}
		If the vertex $i$ is not in $V_{gh}$, then
		\begin{align*}
			(L_{\psi,\sigma} \tilde{\psi})_i =& \sum_{(i,j) \in E_\psi} w_{ij,\sigma} (\psi_i - \psi_j) \\
			=& \sum_{(i,j) \in E} w_{ij,\sigma} (\psi_i - \psi_j) + \sum_{(i,0_{ij})\in E_\psi \setminus E} \frac{\sigma}{1+\sigma} w_{ij} (1+q_{ji}) (\psi_i - \psi_{0_{ij}})\\
			=& \sum_{(i,j) \in E\setminus E_\pm} w_{ij} (\psi_i - \psi_j)\\
			&+ \sum_{(i,j)\in E_\pm} w_{ij} \left[ \frac{1}{1+\sigma} (\psi_i - \psi_j) + \frac{\sigma}{1+\sigma} (1+q_{ji}) \psi_i \right]\\
			=& \sum_{(i,j) \in E\setminus E_\pm} w_{ij} (\psi_i - \psi_j)\\
			&+ \sum_{(i,j)\in E_\pm} w_{ij} \left[ \frac{1}{1+\sigma} (\psi_i - \psi_j) + \frac{\sigma}{1+\sigma} (\psi_i - \psi_j) \right]\\
			=& \sum_{(i,j)\in E} w_{ij} (\psi_i - \psi_j) = \lambda_k \psi_i	=\lambda_k \tilde{\psi}_i.
		\end{align*}
		Otherwise,
		\begin{align*}
			(L_{\psi,\sigma} \tilde{\psi})_{0_{ij}} 	&= \frac{\sigma}{1+\sigma} w_{ij} (1+q_{ji}) (\psi_{0_{ij}} - \psi_i) + \frac{\sigma}{1+\sigma} w_{ij} (1+q_{ij}) (\psi_{0_{ij}} - \psi_j)\\
			&= \frac{-\sigma w_{ij}}{1+\sigma} ((1+q_{ji})\psi_i + (1+q_{ij})\psi_j)\\
			&= 0 = \lambda_k \tilde{\psi}_{0_{ij}},
		\end{align*}
		and so $L_{\psi,\sigma} \tilde{\psi} = \lambda_k \tilde{\psi}$.
	\end{proof}

	\begin{defn}
		\label{def:vertex-bilinear_form}
		Define the family of bilinear forms $B_\sigma$ on $G_\psi$ by $$B_\sigma(u,v) = \langle u, L_{\psi,\sigma} v\rangle + \sigma \langle u, v\rangle_{V_{gh}}.$$ Here, $\langle u, v\rangle_{V_{gh}}$ is the inner product for $G_\psi$ restricted to $V_{gh}$. Written out in full,
		\begin{align*}
			B_\sigma(u,v) 	=& \sum_{(i,j)\in E\setminus E_\pm} w_{ij}(u_i - u_j)(v_i-v_j)\\
			&+ \sum_{(i,j)\in  E_\pm } w_{ij} \frac{1}{1+\sigma} (u_i - u_j)(v_i - v_j)\\
			&+ \sum_{(i,j) \in E_\pm} w_{ij} \frac{\sigma}{1+\sigma} \bigg[ (1+q_{ji})(u_i - u_{0_{ij}})(v_i - v_{0_{ij}})  \\
			&\qquad\qquad\qquad\qquad + (1+q_{ij})(u_j - u_{0_{ij}})(v_j - v_{0_{ij}}) \bigg]\\
			&+ \sigma \sum_{i\in V_{gh}} u_i v_i.
		\end{align*}
	\end{defn}
	We are again assuming that $\psi$ is non-zero on each vertex of $G$. If $\psi$ does have zeros, the corresponding bilinear form is $ B_\sigma(u,v) = \langle u, L_{\psi,\sigma} v\rangle + \sigma \langle u, v\rangle_{V_{gh}} + \sigma\langle u,v\rangle_{V_0},$ where $V_0 = \{i \colon \psi_i =0\}$. All of the results in this section still hold, and so for simplicity we keep making use of \cref{assump:non-zero}.

	\begin{lemma}
		\label{lemma:vertex-lambda_sigma_increasing}
		The eigenvalues of $B_\sigma$ are non-decreasing eigenvalue branches of the eigenvalues of $L_{\psi,0}$, for $0<\sigma<\infty$.
	\end{lemma}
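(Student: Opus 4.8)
The plan is to mirror the argument for \cref{lemma:edge-lambda_sigma_increasing}, replacing the rank-one perturbation $P$ there by the $\sigma$-derivative of the more elaborate form in \cref{def:vertex-bilinear_form}. First I would observe that $B_\sigma$ is represented by the symmetric matrix $M_\sigma = L_{\psi,\sigma} + \sigma\,\Pi_{V_{gh}}$, where $\Pi_{V_{gh}}$ is the diagonal projection onto the ghost coordinates; since every coefficient appearing in $M_\sigma$ (namely $\tfrac{1}{1+\sigma}$, $\tfrac{\sigma}{1+\sigma}$, and $\sigma$) is analytic for $\sigma>-1$, standard perturbation theory \cite{Ka:PertBook} again furnishes analytic eigenvalue branches $\lambda(\sigma)$ emanating from the spectrum of $L_{\psi,0}=M_0$. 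For a branch with unit-normalized eigenvector $u=u(\sigma)$, differentiating $\lambda=\langle u, M_\sigma u\rangle$ and using $\langle u',u\rangle+\langle u,u'\rangle=0$ collapses the terms involving $u'$, exactly as in \cref{lemma:edge-lambda_sigma_increasing}, leaving
\[
\lambda'(\sigma) = B_\sigma'(u,u),
\]
the partial $\sigma$-derivative of the explicit bilinear form with $u$ held fixed. Thus it suffices to show $B_\sigma'(u,u)\ge 0$.

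Next I would compute $B_\sigma'(u,u)$ directly from \cref{def:vertex-bilinear_form}. Only the $E_\pm$ edges and the ghost term depend on $\sigma$, and using $\frac{d}{d\sigma}\frac{1}{1+\sigma}=-\frac{1}{(1+\sigma)^2}$, $\frac{d}{d\sigma}\frac{\sigma}{1+\sigma}=\frac{1}{(1+\sigma)^2}$, and $\frac{d}{d\sigma}\sigma=1$, I obtain
\[
B_\sigma'(u,u) = \frac{1}{(1+\sigma)^2}\sum_{(i,j)\in E_\pm} w_{ij}\, T_{ij} + \sum_{0_{ij}\in V_{gh}} u_{0_{ij}}^2,
\]
where $T_{ij} = -(u_i-u_j)^2 + (1+q_{ji})(u_i-u_{0_{ij}})^2 + (1+q_{ij})(u_j-u_{0_{ij}})^2$. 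The second sum and the prefactors are manifestly nonnegative, so everything reduces to the sign of each $T_{ij}$.

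The heart of the proof is showing $T_{ij}\ge 0$, and this is where the negative first term makes nonnegativity nonobvious: shrinking the weight on an $E_\pm$ edge contributes negatively to $\lambda'$, and one must verify it is dominated by the growing ghost-edge weights. The key is the identity $q_{ij}q_{ji}=\left(-\tfrac{\psi_i}{\psi_j}\right)\left(-\tfrac{\psi_j}{\psi_i}\right)=1$. Viewing $T_{ij}$ as a quadratic in the ghost value $c:=u_{0_{ij}}$, its $c^2$ coefficient is $2+q_{ij}+q_{ji}>0$, so $T_{ij}$ is an upward parabola; I would minimize over $c$ and find the minimizer is precisely the natural extension value $\tilde u_{0_{ij}} = a_{ij}u_i + a_{ji}u_j$ of \cref{def:vertex-extension} (the same algebra that makes $\tilde\psi$ vanish on ghosts in \cref{lemma:vertex-lambda_k_constant}). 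Substituting $c=\tilde u_{0_{ij}}$ and using $q_{ij}q_{ji}=1$ collapses the two ghost-edge squares into exactly $(u_i-u_j)^2$, so the minimum value of $T_{ij}$ is $0$. Hence $T_{ij}\ge 0$ for every sign-change edge, giving $B_\sigma'(u,u)\ge 0$ and $\lambda'\ge 0$, as claimed.

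I expect the main obstacle to be this last verification—that the minimizer of $T_{ij}$ coincides with the extension value and yields minimum $0$; equivalently, that the $3\times 3$ Gram matrix of $T_{ij}$ is positive semidefinite. This hinges entirely on $q_{ij}q_{ji}=1$, which forces the $2\times 2$ leading minor of that matrix to vanish and thereby exhibits the rank drop responsible for $T_{ij}$ touching zero. Everything else is the same variational bookkeeping as in the edge-based case.
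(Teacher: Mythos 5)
Your proposal is correct and follows essentially the same route as the paper: both reduce to $\lambda' = B_\sigma'(u,u) = \langle u, L_{\psi,\sigma}' u\rangle + \langle u,u\rangle_{V_{gh}}$ and then show the $E_\pm$-edge contribution is nonnegative using $q_{ij}q_{ji}=1$. The only difference is presentational — the paper writes each edge term directly as the explicit perfect square $\frac{w_{ij}}{(1+\sigma)^2}\,q_{ij}\bigl((1+q_{ji})u_{0_{ij}} - q_{ji}u_i - u_j\bigr)^2$, which is algebraically identical to your $\frac{w_{ij}}{(1+\sigma)^2}T_{ij}$, whereas you establish nonnegativity by minimizing the quadratic in $u_{0_{ij}}$ and observing the minimizer is the extension value of \cref{def:vertex-extension}.
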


	\begin{proof}
		The proof is the same as in the edge-based flow case from \cref{lemma:edge-lambda_sigma_increasing}: we have $$\lambda' = B_\sigma'(u,u) = \langle u, L_{\psi, \sigma}' u \rangle + \langle u,u\rangle_{V_{gh}},$$ and $$\langle u, L_{\psi, \sigma}' u\rangle = \sum_{(i,j)\in E_\pm} \frac{w_{ij}}{(1+\sigma)^2} q_{ij}(u_{0_{ij}} + q_{ji} u_{0_{ij}} - q_{ji} u_i - u_j)^2,$$ the latter of which is a straightforward computation. Since $w_{ij}, q_{ij}$ are both non-negative we conclude that $\lambda' \geq 0$.
 	\end{proof}

	\subsection{The relation to Dirichlet Laplacians}
	\label{section:vertex-based_flow-relation_to_Dirichlet}
	
	Our results on the graph spectral flow involve the limiting behaviour of $B_\sigma$ and $L_{\psi,\sigma}$ as $\sigma \to \infty$. For such a statement like $L_{\psi,\infty} u = \lambda u$ to make sense, we need $u|_{ V_{gh}} = 0$; for the rest of this paper, we use the convention that $0\cdot \infty = 0$. Thus, the limiting eigenvalue problem asks for a function $u$ with $L_{\psi,\infty} u = \lambda u$ and $u|_{ V_{gh}} = 0$. This is reminiscent of a Dirichlet boundary value condition, so we begin by recalling the basic definitions and properties of Dirichlet eigenvalues for graphs. Afterwards we return to the vertex-based graph spectral flow, and finish the proof that this flow counts the nodal deficiency of a graph eigenvector. For a complete introduction to Dirichlet eigenvalues on graphs, see \cite[Chapter 8]{Ch:SpecBook}.
	
	\begin{defn}
		\label{def:vertex-boundaries}
		For a graph $G = (V,E,w)$ and a subset of vertices $S$, we define:
		\begin{itemize}
			\item the \textbf{vertex boundary} $\partial_V S$ as the vertices in $V\setminus S$ that are adjacent to some vertex in $S$, and
			\item the \textbf{edge boundary} $\partial_E S$ as the edges in $E$ that connect a vertex in $\partial_V S$ to a vertex in $S$.
		\end{itemize}
		The space of vectors $u \in \R^{|V|}$ that are zero on $\partial_V S\subset V$ is denoted $D^*_S$ or just $D^*$ when $S$ is clear, i.e. $$D^* = \{u\in \R^{|V|} \colon u|_S = 0\}.$$
		
		Finally, the \textbf{Dirichlet subgraph induced by $S$}, or the D-subgraph induced by $S$, denoted $S^{(D)}$, is the subgraph of $G$ induced by the vertices in $S$, together with the vertices of $\partial_V S$ and edges of $\partial_E S$; explicitly, the induced subgraph is  $(S\cup \partial_V S, E|_{S} \cup \partial_E S, w|_{E|_{S} \cup \partial_E S})$.
	\end{defn}

	This notion of vertex boundaries allows us to impose Dirichlet/zero boundary conditions on problems involving the graph Laplacian, which was implicit in the construction from \Cref{section:edge-based_flow-construction}.
	
	\begin{defn}
		\label{def:vertex-first_Dirichlet_eigenvalue}
		The \textbf{first Dirichlet eigenvalue} of a graph $G$, corresponding to $S$, is 
		\begin{align*}
			\lambda_1^{(D)} 	= \inf_{\substack{{u \neq 0}\\ {u \in D^*}}} \sum_{(i,j)\in \partial_E S} \frac{w_{ij} (u_i - u_j)^2}{\sum_{i\in S} u_i^2} 
			= \inf_{\substack{{u \neq 0}\\ {u \in D^*}\\{\langle u, u\rangle = 1}}} \sum_{(i,j)\in \partial_E S} \langle u, L^{(D)} u\rangle_S.
		\end{align*}
		The operator $L^{(D)}$ is the graph Laplacian of $G$ with the rows and columns corresponding to vertices in $V\setminus S$ removed.
		
		Higher order eigenvalues are found inductively via the Courant-Fischer/Min-max theorem (see, for example, \cite[Chapter 1, \S 10]{Ka:PertBook}): after determining $\lambda_1^{(D)}, ..., \lambda_k^{(D)},$ and associated eigenvectors $\phi_1, ..., \phi_k,$ we have 
		\begin{align*}
			\lambda_{k+1}^{(D)} 	&= \inf_{\substack{{u \neq 0}\\ {u \in D^*}\\ {\langle u, u\rangle = 1}\\{u \perp \phi_i, 1\leq i \leq k}}} \sum_{(i,j)\in E(S^{(D)})} w_{ij} (u_i - u_j)^2.
		\end{align*}
	\end{defn}

	Right away we see that $\lambda_1^{(D)} >0$. In fact, if the induced subgraph $S^{(D)}$ is connected (modulo zero vertices, to be made precise), then the corresponding eigenvector is signed. This result is used to show that the first Dirichlet eigenvalue of a connected subgraph is simple, which is then used to show that higher eigenvectors cannot be signed.

	\begin{defn}
		\label{def:vertex-D-connected}
		Given a graph $G=(V,E)$ and a subset of vertices $S$, we call the induced D-subgraph of $S$  \textbf{Dirichlet disconnected} if there are subgraphs $S_1, S_2$ of $G$ such that $S^{(D)} = S_1^{(D)} \cup S_2^{(D)}$ and $S_1\cap S_2 \subset \partial_V S$. Otherwise, $S$ is \textbf{Dirichlet connected} if $S$ is not Dirichlet disconnected and both $S_1$ and $S_2$ are connected subgraphs of $G$. We will write this last term as D-connected.
	\end{defn}

	An equivalent characterization for an induced D-subgraph $S^{(D)}$ to be D-connected is that any two vertices are path-connected in $S^{(D)},$ where the path cannot pass through $\partial_V S$.

	\begin{lemma}
		\label{lemma:vertex-first_eigvals_D-connected}
		Suppose that the subgraph $S^{(D)}$ is D-connected. Then
		\begin{enumerate}
			\item the eigenvector $\phi_1$ corresponding to $\lambda_1^{(D)}$ is signed,
			\item $\lambda_1^{(D)}$ is simple, and
			\item higher index eigenvectors $\phi_i$ cannot be signed, implying a signed eigenvector must correspond to the first Dirichlet eigenvalue.
		\end{enumerate}
	\end{lemma}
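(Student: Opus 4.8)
The plan is to prove the three claims about the first Dirichlet eigenvalue of a D-connected subgraph $S^{(D)}$ by adapting the standard continuum argument (as in \cite[\S1.5]{Ch:EigBook}) to the graph setting, exploiting that the Rayleigh quotient defining $\lambda_1^{(D)}$ only involves the squared differences $(u_i-u_j)^2$ over edges. The organizing observation is that replacing any minimizer $u$ by its entrywise absolute value $|u|$ (where $|u|_i = |u_i|$) can only decrease the numerator $\sum_{(i,j)\in\partial_E S} w_{ij}(u_i-u_j)^2$ while leaving the denominator $\sum_{i\in S} u_i^2$ fixed, since $||u_i|-|u_j||\le |u_i-u_j|$ and $w_{ij}\ge 0$. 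This is the discrete analogue of the fact that $|\nabla|u||\le|\nabla u|$ pointwise.

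For claim (1), I would take $\phi_1$ a minimizer of the Rayleigh quotient and consider $|\phi_1|$. By the observation above, $|\phi_1|$ is also a minimizer, hence also an eigenvector for $\lambda_1^{(D)}$, and it is nonnegative by construction. I then argue that $|\phi_1|$ is in fact strictly positive on $S$: if some vertex $i\in S$ had $|\phi_1|_i=0$, then writing out the eigenvalue equation $(L^{(D)}|\phi_1|)_i=\lambda_1^{(D)}|\phi_1|_i=0$ and using that all off-diagonal terms $-w_{ij}|\phi_1|_j$ are nonpositive (with $w_{ij}\ge 0$ and $|\phi_1|_j\ge 0$) forces $|\phi_1|_j=0$ for every neighbor $j$ of $i$ within $S^{(D)}$. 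Propagating this along paths that do not pass through $\partial_V S$—which exist by D-connectedness—would force $|\phi_1|\equiv 0$ on $S$, contradicting $\phi_1\neq 0$. Hence $|\phi_1|>0$ on $S$; since $\phi_1$ and $|\phi_1|$ differ only by signs on $S$ and $\phi_1$ itself was shown to be (up to this analysis) non-vanishing, $\phi_1$ is signed.

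Claim (2) then follows: suppose $\lambda_1^{(D)}$ had two orthogonal eigenvectors. By claim (1) each can be taken signed, say both strictly positive on $S$ after adjusting signs, but two strictly-one-signed vectors on $S$ cannot be orthogonal because $\langle \phi, \phi'\rangle_S = \sum_{i\in S}\phi_i\phi'_i > 0$. This contradiction shows $\dim$ of the $\lambda_1^{(D)}$-eigenspace is $1$, i.e. $\lambda_1^{(D)}$ is simple. For claim (3), let $\phi_i$ be an eigenvector for some $\lambda_i^{(D)}>\lambda_1^{(D)}$; by the Courant--Fischer characterization it satisfies $\phi_i\perp\phi_1$. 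If $\phi_i$ were signed (say nonnegative on $S$), then $\langle\phi_i,\phi_1\rangle_S$ would be a sum of products of two same-signed sequences and hence nonzero, contradicting orthogonality with the strictly-signed $\phi_1$. Therefore no higher eigenvector can be signed, and conversely any signed eigenvector must be the first one.

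The main obstacle I anticipate is making the strict-positivity propagation in claim (1) rigorous through the boundary bookkeeping: the Laplacian $L^{(D)}$ is the restriction to $S$, so the off-diagonal couplings include edges in $\partial_E S$ to vertices of $\partial_V S$ where $u$ is forced to vanish, and one must be careful that the maximum-principle-style propagation only travels along paths internal to $S$ (never through $\partial_V S$), which is exactly what the equivalent path-characterization of D-connectedness guarantees. Handling the degenerate case where $\psi$, and hence the induced weights, could produce isolated structure inside $S^{(D)}$ is where the D-connected hypothesis must be invoked precisely; everything else reduces to the elementary inequality $||u_i|-|u_j||\le|u_i-u_j|$ and positivity of the edge weights.
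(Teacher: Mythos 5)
Your overall strategy is sound and, for claim 3, is identical to the paper's: the paper proves claim 3 by exactly your orthogonality argument against the strictly signed $\phi_1$, and simply cites \cite[Lemma 6.1]{BiLeSt:EigvecBook} for claims 1 and 2, so for those two claims you are reconstructing the standard argument that the paper outsources. Your reconstruction is mostly right --- the observation that $(|u_i|-|u_j|)^2\le(u_i-u_j)^2$ with $w_{ij}\ge 0$ makes $|\phi_1|$ a minimizer, and the maximum-principle propagation of strict positivity along paths internal to $S$ is precisely the correct use of D-connectedness --- but there is one genuine gap at the end of your claim 1.

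You establish that $|\phi_1|$ is a nonnegative eigenvector which is strictly positive on $S$, and then conclude that $\phi_1$ is signed because it is non-vanishing. Non-vanishing is not the same as signed: a priori $\phi_1$ could still take both signs on $S$. To close the gap you need the equality case of your basic inequality: since $\phi_1$ and $|\phi_1|$ achieve the same (minimal) Rayleigh quotient, every edge term must satisfy $(|\phi_{1,i}|-|\phi_{1,j}|)^2=(\phi_{1,i}-\phi_{1,j})^2$, which forces $\phi_{1,i}\phi_{1,j}\ge 0$ on every edge with $w_{ij}>0$; combined with non-vanishing on $S$ this gives $\phi_{1,i}\phi_{1,j}>0$ along every edge internal to $S$, and D-connectedness then propagates a single sign across all of $S$. (Alternatively, write $\phi_1=v^+-v^-$ with $v^{\pm}=(|\phi_1|\pm\phi_1)/2$; both are nonnegative eigenvectors, your strict-positivity argument forces any nonzero one of them to be strictly positive on $S$, and since $v^+_iv^-_i\equiv 0$ they cannot both be nonzero, so $\phi_1=\pm|\phi_1|$.) With that one step supplied, your claims 2 and 3 go through as written, and your argument for claim 3 coincides with the paper's.
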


	\cref{lemma:vertex-first_eigvals_D-connected,lemma:vertex-Dirichlet_eigenvectors} together form the analogue to \cref{lemma:edge-dirichlet_eigvecs}, with the key difference being that $G_{\psi,\sigma}$ contains explicit vertices for the zeros of $\psi$. Note that \cref{lemma:vertex-first_eigvals_D-connected} is stated for the D-connected components of $G$, whereas \cref{lemma:edge-dirichlet_eigvecs} is stated for the entire graph.

	\begin{proof}
		Claims 1. and 2. are proved in \cite[Lemma 6.1]{BiLeSt:EigvecBook}, with their $V^\circ \cup \partial V$ corresponding to our $S^{(D)}$.
		
		For claim 3., since $\phi_{k}$ minimizes $\sum_{(i,j)\in E(S^{(D)})} w_{ij} (u_i - u_j)^2$ over all $u$ with $\langle u, u\rangle = 1, u \neq 0, u \in D^*$, and $u\perp \phi_i$ for $1\leq i \leq k-1$, we have in particular that $\langle \phi_k, \phi_1\rangle = 0$. We already have that $\phi_1$ is signed, and so if $\phi_k$ was signed as well, assuming both eigenvectors positive gives $\langle \phi_1, \phi_k\rangle > 0$. Thus a higher signed eigenvector cannot be orthogonal to $\phi_1$, forcing $\phi_k$ to change sign within $S$.
	\end{proof}

	\begin{proposition}
		\label{lemma:vertex-Dirichlet_eigenvectors}
		Given a graph $G$ and a nowhere zero Laplace eigenvector $\psi$ with eigenvalue $\lambda$, decompose the nodal domains $S = \{i \colon \psi_i > 0\}\cup \{i \colon \psi_i <0 \}$ of the $\psi$-subdivision $G_{\psi,\infty}$ into D-connected graphs $S_1, S_2, ..., S_n$. Then the restriction of $\psi$ to each $S_l$, $\psi|_{S_l}$, is a Dirichlet eigenvector of $S^{(D)}$ with eigenvalue $\lambda$. Moreover, $\psi|_{S_l}$ is signed, and so $\lambda$ is the first Dirichlet eigenvalue for each $S_l$.
	\end{proposition}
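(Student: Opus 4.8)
The plan is to treat this as the vertex-based counterpart of \cref{lemma:edge-dirichlet_eigvecs}, reducing the statement to a single direct computation together with an appeal to \cref{lemma:vertex-first_eigvals_D-connected}. There are really three things to establish: that $\psi|_{S_l}$ solves the Dirichlet eigenvalue equation on $S_l^{(D)}$ with eigenvalue $\lambda$, that it is signed, and that therefore $\lambda$ is the \emph{first} Dirichlet eigenvalue. The signedness is essentially built into the hypothesis, since each $S_l$ is a D-connected piece of a set on which $\psi$ has a single fixed sign, so the work concentrates on the eigenvalue equation.

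First I would recall that in the limit $\sigma \to \infty$ the sign-change edges of $G_{\psi,\sigma}$ carry weight $\frac{1}{1+\sigma} w_{ij} \to 0$ and disappear, while a ghost edge from $j$ to $0_{ij}$ carries weight $w_{ij}(1+q_{ij})$; the ghost vertices are exactly the Dirichlet boundary $\partial_V S_l$, on which $\tilde{\psi}$ vanishes by the computation following \cref{def:vertex-extension}. The Dirichlet Laplacian $L^{(D)}$ on $S_l^{(D)}$ is then, by \cref{def:vertex-first_Dirichlet_eigenvalue}, the restriction of $L_{\psi,\infty}$ to the interior vertices $S_l$, with the diagonal degree entry still counting the ghost edges. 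So for an interior vertex $j \in S_l$ I would split
$$ (L^{(D)}\psi|_{S_l})_j = \sum_{(j,m)\in E\setminus E_\pm} w_{jm}(\psi_j - \psi_m) + \sum_{(i,j)\in E_\pm} w_{ij}(1+q_{ij})\,\psi_j, $$
where the second sum collects the ghost-edge contributions, each of the form $w_{ij}(1+q_{ij})(\psi_j - \psi_{0_{ij}})$ with $\psi_{0_{ij}} = 0$.

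The crux is the algebraic identity $(1+q_{ij})\psi_j = \psi_j - \psi_i$, which is immediate from $q_{ij} = -\psi_i/\psi_j$. Substituting it rewrites each ghost term $w_{ij}(1+q_{ij})\psi_j$ as the ordinary Laplacian term $w_{ij}(\psi_j - \psi_i)$, so the two sums reassemble into $\sum_{(j,m)\in E} w_{jm}(\psi_j - \psi_m) = (L\psi)_j = \lambda\psi_j$. This is precisely the restriction of the computation in \cref{lemma:vertex-lambda_k_constant} to the single component $S_l$, and it establishes that $\psi|_{S_l}$ is a Dirichlet eigenvector with eigenvalue $\lambda$. Since $\psi|_{S_l}$ is signed, \cref{lemma:vertex-first_eigvals_D-connected} (part 3) then forces $\lambda$ to be $\lambda_1^{(D)}$ on each $S_l$.

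The main obstacle is bookkeeping rather than conceptual: one must check that the degree entry of $L^{(D)}$ at $j$ retains the ghost edges emanating from $j$, so that the boundary terms really do appear; that the $\sigma\to\infty$ weights are taken consistently with the convention $0\cdot\infty = 0$ on the vanished sign-change edges; and that $q_{ij}$ versus $q_{ji}$ is the index attached to the correct endpoint, as in \cref{def:vertex-bilinear_form}. Once these are pinned down, the identity $(1+q_{ij})\psi_j = \psi_j - \psi_i$ does all the real work.
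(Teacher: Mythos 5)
Your proposal is correct and follows essentially the same route as the paper: the same restriction-and-extension-by-zero of $\psi$ to each $S_l$, the same vertexwise computation in which the ghost-edge term $w_{ij}(1+q_{ij})(\psi_j - \psi_{0_{ij}})$ collapses via $(1+q_{ij})\psi_j = \psi_j - \psi_i$ back to the ordinary Laplacian sum, and the same appeal to signedness plus \cref{lemma:vertex-first_eigvals_D-connected} to identify $\lambda$ as the first Dirichlet eigenvalue. The only difference is cosmetic (you index the interior vertex by $j$ where the paper uses $i$), and your bookkeeping of the $\sigma\to\infty$ weights matches \cref{def:vertex-psi_subdivision}.
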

	
	\begin{proof}
		Recall that $G_{\psi,\infty}$ contains the original vertices of $G$ together with ghost points $0_{ij}$ for each $(i,j)\in E_\pm$, and each edge $(i,j)\in E_\pm$ is replaced by two edges $(i,0_{ij})$ and $(0_{ij},j)$, with respective edge weights $(1+q_{ji})w_{ij}$ and $(1+q_{ij})w_{ij}$. 
		
		For a D-connected component $S_l$, define
		\begin{align*}
			\psi|_{S_l} = \begin{cases} \psi_i, & i\in S_l,\\ 0, & i \not\in S_l,	\end{cases}
		\end{align*}
		which is the restriction of $\psi$ to $S_l$, followed by an extension by zero to the rest of the graph. We claim that $\psi|_{S_l}$ is an eigenvector of $L_{\psi,\infty}$ restricted to $S_l$, which implies that $\psi|_{S_l}$ is also a Dirichlet eigenvector of $S_l$.
		
		In general, for any vector $u$ that is zero on $ V_{gh}$ we have 
		\begin{align*}
			(L_{\psi,\infty} u)_i &= \sum_{(i,j) \in E\setminus E_\pm} w_{ij} (u_i - u_j) + \sum_{(i,j)\in E_\pm} w_{ij} (1+q_{ji})(u_i - u_{0_{ij}})
		\end{align*}
		For $i\in S_l$,
		\begin{align*}
			(L_{\psi,\infty} \psi|_{S_l})_i =&  \sum_{(i,j) \in E\setminus E_\pm} w_{ij} ((\psi|_{S_l})_i - (\psi|_{S_l})_j)\\
			&+ \sum_{(i,j) \in E_\pm} w_{ij} (1+ q_{ji}) ((\psi|_{S_l})_i - (\psi|_{S_l})_{0_{ij}})\\
			=& \sum_{(i,j) \in E\setminus E_\pm} w_{ij} (\psi_i - \psi_j) + \sum_{(i,j) \in E_\pm} w_{ij} (1+ q_{ji}) \psi_i\\
			=& \sum_{(i,j) \in E\setminus E_\pm} w_{ij} (\psi_i - \psi_j) + \sum_{(i,j) \in E_\pm} w_{ij} (\psi_i - \psi_j)\\
			=& \sum_{(i,j) \in E} w_{ij} (\psi_i - \psi_j) = \lambda \psi_i = \lambda (\psi|_{S_l})_i,
		\end{align*}
		where the sum over $E_\pm$ can be empty or not depending on if $i$ has neighbors in $V_{gh}$. This shows each $\psi|_{S_l}$ is a Dirichlet eigenvector of $S_l$ with eigenvalue $\lambda$. Moreover, each $S_l$ is a D-connected subgraph of $G_{\psi,\infty}$ corresponding to a nodal domain $\{i \colon \psi_i > 0\}$ or $\{i \colon \psi_i < 0 \}$, and so each $\psi|_{S_l}$ is signed.
		
		Thus we have constructed signed Dirichlet eigenvectors for $\lambda$ on each of the D-connected components of $S^{(D)}$, establishing that $\lambda$ is the first Dirichlet eigenvalue for each $S_l$.
	\end{proof}
	
	Note that in determining whether $\psi|_{S_l}$ is a Dirichlet eigenvector, we only check the eigenvalue equation within $S_l$ and not on $\partial_V S_l$; in general, $(L_{\psi,\infty} \psi|_{S_l})_i \neq \lambda (\psi|_{S_l})_i$ for $i \in \partial_V S_l$.

	\begin{lemma}
		\label{lemma:vertex-lambda_k_crossings}
		Let $(\lambda_\sigma, u_\sigma) = (\lambda,u)$ be an eigenvalue/eigenvector branch of $B_\sigma$. If $\lambda_{\sigma^*}' = 0$ for some $\sigma*\in(0,\infty)$ then the corresponding eigenvalue branch $\lambda_{\sigma}$ is constant and $\lambda_\sigma$ is in the spectrum of $L_{\psi,\infty}$. Moreover if $\lambda_{\sigma^*} = \lambda_k$ then the eigenvector $u$ is a constant multiple of $\psi$.
	\end{lemma}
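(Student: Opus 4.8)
The plan is to follow the proof of \cref{lemma:edge-lambda_k_crossings} closely, carrying the extra bookkeeping forced by the ghost vertices. I would begin from the derivative formula recorded in the proof of \cref{lemma:vertex-lambda_sigma_increasing},
$$\lambda' = \langle u, L_{\psi,\sigma}' u\rangle + \langle u, u\rangle_{V_{gh}},$$
and note that both terms are non-negative: the second is a sum of squares, and the first equals $\sum_{(i,j)\in E_\pm} \frac{w_{ij}}{(1+\sigma)^2}\, q_{ij}\,(u_{0_{ij}} + q_{ji} u_{0_{ij}} - q_{ji} u_i - u_j)^2$. Hence $\lambda_{\sigma^*}' = 0$ forces each term to vanish separately. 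The ghost term gives $u_{0_{ij}} = 0$ for every ghost vertex, i.e.\ $u|_{V_{gh}} = 0$; substituting this into the first term yields, for each $(i,j)\in E_\pm$, the ratio condition $q_{ji} u_i + u_j = 0$, equivalently $u_j = \frac{\psi_j}{\psi_i} u_i$, exactly as in the edge-based case.

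The key step is to show that, for this particular $u$, the image $L_{\psi,\sigma} u$ (the ghost penalty contributes nothing, since $u$ vanishes there) is independent of $\sigma$; this plays the role of ``$Pu = 0$'' in \cref{lemma:edge-lambda_k_crossings}. I would verify it by a direct computation, vertex class by vertex class. At an original vertex $i$, the two $\sigma$-dependent contributions along a sign-change edge $(i,j)\in E_\pm$ --- from the weakened original edge of weight $\frac{1}{1+\sigma} w_{ij}$ and from the ghost edge $(i,0_{ij})$ of weight $\frac{\sigma}{1+\sigma} w_{ij}(1+q_{ji})$ --- collapse, using $u_{0_{ij}} = 0$ and $u_i - u_j = (1+q_{ji}) u_i$, to the single $\sigma$-free term $w_{ij}(u_i - u_j)$; summing over all edges incident to $i$ recovers $(L u|_V)_i$. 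At a ghost vertex $0_{ij}$ the ratio condition makes $(1+q_{ji}) u_i + (1+q_{ij}) u_j$ vanish, so the image is $0$ there. Thus $L_{\psi,\sigma} u$ agrees with $L u|_V$ on $V$ and is zero on $V_{gh}$ for every $\sigma$, so $u$ is an eigenvector of $B_\sigma$ with the fixed eigenvalue $\lambda_{\sigma^*}$ for all $\sigma$. This $\sigma$-independence --- a cancellation between the ghost-edge weights and the vanishing/ratio conditions --- is the step I expect to be the main obstacle, since there is no single matrix annihilating $u$ as in the edge-based flow, and the collapse must be checked separately on original and ghost vertices.

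Consequently the branch $\lambda_\sigma$ is constant: it passes through $(\sigma^*, \lambda_{\sigma^*})$ carrying the eigenvector $u$ shared by the constant branch just exhibited, and since eigenvalue branches are analytic \cite{Ka:PertBook} the two coincide. Letting $\sigma \to \infty$ and again using $u|_{V_{gh}} = 0$, the same computation gives $L_{\psi,\infty} u = \lambda_{\sigma^*} u$ on $V$, so $\lambda_{\sigma^*}$ lies in the Dirichlet spectrum of $L_{\psi,\infty}$.

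For the final claim, suppose $\lambda_{\sigma^*} = \lambda_k$. Restricting $u$ to each D-connected nodal component $S_l$ of the $\psi$-subdivision gives, since $u|_{V_{gh}} = 0$, a Dirichlet eigenvector of $S_l^{(D)}$ with eigenvalue $\lambda_k$; by \cref{lemma:vertex-Dirichlet_eigenvectors} this is the first Dirichlet eigenvalue of $S_l^{(D)}$, which is simple by \cref{lemma:vertex-first_eigvals_D-connected}, so $u|_{S_l} = \alpha_l \psi|_{S_l}$ for some constant $\alpha_l$. Finally, for a sign-change edge $(i,j) \in E_\pm$ joining $S_l$ to $S_m$, the ratio condition gives $\alpha_m \psi_j = u_j = \frac{\psi_j}{\psi_i} u_i = \alpha_l \psi_j$, whence $\alpha_l = \alpha_m$; connectedness of $G$ then propagates a single constant $\alpha$ across all components, so $u|_V = \alpha\psi$ and $u|_{V_{gh}} = 0 = \alpha \tilde\psi|_{V_{gh}}$, giving $u = \alpha\tilde\psi$.
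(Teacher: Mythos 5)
Your proposal is correct and follows essentially the same route as the paper: the paper's own proof is a two-line ``mutatis mutandis'' reference to \cref{lemma:edge-lambda_k_crossings}, extracting exactly the two vanishing conditions $u|_{V_{gh}}=0$ and $u_i = \frac{\psi_i}{\psi_j}u_j$ that you derive. Your explicit verification that $L_{\psi,\sigma}u$ is $\sigma$-independent (the cancellation between the weakened original edge and the ghost edge, and the vanishing at ghost vertices via $q_{ij}q_{ji}=1$) and your use of \cref{lemma:vertex-first_eigvals_D-connected,lemma:vertex-Dirichlet_eigenvectors} for the final claim are precisely the details the paper leaves implicit.
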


	\begin{proof}
		This proof follows mutatis mutandis as in the proof of \cref{lemma:edge-lambda_k_crossings}: from $\lambda' = 0$ we have $\langle u, L_{\psi, \sigma}' u \rangle = 0$ and $\langle u,u\rangle_{V_{gh}} = 0$. The latter equality forces $u_{0_{ij}} = 0$ for $(i,j)\in E_\pm$, after with the former imposes $u_i = \frac{\psi_i}{\psi_j} u_j$ across $(i,j)\in E_\pm$.
	\end{proof}

	\begin{theorem}
		\label{theorem:vertex-main_theorem}
		As $\sigma \to \infty$, the eigenvalues of $B_\sigma$ converge to the Dirichlet eigenvalues of the D-subgraph $S^{(D)} = (\{i \colon \psi > 0\} \cup \{i \colon \psi < 0\} )^{(D)}$. The number of D-connected components of $S^{(D)}$ is the multiplicity of $\lambda_k$ for $B_\infty$, and the nodal deficiency of $\psi$ on $G_{\psi,\infty}$ is $\delta(\psi) = k - \nu(\psi)$. Note that, by construction, there will be $k - \nu(\psi)+|V_{gh}|$ eigenvalue branches that cross $\lambda_k$ as $\sigma \to \infty$.
	\end{theorem}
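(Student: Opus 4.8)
The plan is to transcribe the architecture of the proof of \cref{theorem:edge-nodal_thm} into the subdivision setting, with two changes: the finite endpoint $\sigma=1$ is replaced by the large-coupling limit $\sigma\to\infty$, and $G_\psi$ is replaced by $G_{\psi,\infty}$. Accordingly there are three things to establish: (A) that as $\sigma\to\infty$ the eigenvalues of $B_\sigma$ converge to the Dirichlet spectrum of $S^{(D)}$, with exactly $|V_{gh}|$ branches escaping to $+\infty$; (B) that $\lambda_k$ occurs in $B_\infty$ with multiplicity equal to the number $\nu(\psi)$ of D-connected components of $S^{(D)}$; and (C) the crossing count, obtained by comparing the $\sigma=0$ and $\sigma=\infty$ spectra. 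Throughout I work under the hypotheses of \cref{theorem:main_theorem} ($\lambda_k$ simple, $\psi$ nowhere zero), and write $M_\sigma$ for the symmetric matrix representing $B_\sigma$ and $\Pi_{V_{gh}}$ for the orthogonal projection onto the ghost coordinates, so that $M_\sigma = L_{\psi,\sigma} + \sigma\Pi_{V_{gh}}$. I expect (A) to be the main obstacle, as it is the only genuinely new analytic ingredient relative to the edge-based flow.

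For (A) I would use the Courant--Fischer characterization. Restricting test spaces to $D^* = \{u : u|_{V_{gh}} = 0\}$ and invoking, for each $(i,j)\in E_\pm$, the pointwise identity
\[
 (1+q_{ji})u_i^2 + (1+q_{ij})u_j^2 - (u_i-u_j)^2 = \left(\sqrt{q_{ji}}\,u_i + \sqrt{q_{ij}}\,u_j\right)^2 \ge 0
\]
(which uses $q_{ij}q_{ji}=1$), a short computation gives $B_\sigma(u,u) = B_\infty(u,u) - \tfrac{1}{1+\sigma}\sum_{E_\pm} w_{ij}(\sqrt{q_{ji}}u_i + \sqrt{q_{ij}}u_j)^2 \le B_\infty(u,u)$ for all $u\in D^*$, with $B_\sigma(u,u)\to B_\infty(u,u)$. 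Hence the ordered eigenvalues satisfy $\lambda_m(\sigma)\le\lambda_m^{(D)}$ for $m\le|V|$, and by the monotonicity of \cref{lemma:vertex-lambda_sigma_increasing} each such branch increases to a limit $\le\lambda_m^{(D)}$. For the matching lower bound I would take normalized eigenvectors $u^\sigma_1,\dots,u^\sigma_{|V|}$; since their Rayleigh quotients are bounded and dominate $\sigma\,\|u^\sigma_m|_{V_{gh}}\|^2$, the ghost components vanish like $O(1/\sigma)$, so subsequential limits lie in $D^*$, form an orthonormal basis of $D^*$, and (passing to the limit in $B_\sigma(u^\sigma_m,w)=\lambda_m(\sigma)\langle u^\sigma_m,w\rangle$ against fixed $w\in D^*$, where the penalty term drops out) solve the Dirichlet problem $B_\infty(v_m,w)=\ell_m\langle v_m,w\rangle$; this forces $\{\ell_m\}$ to be the full Dirichlet spectrum and hence $\ell_m=\lambda_m^{(D)}$. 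The remaining $|V_{gh}|$ branches diverge by Weyl's inequality applied to $M_\sigma\ge\sigma\Pi_{V_{gh}}$, whose top $|V_{gh}|$ eigenvalues equal $\sigma$; equivalently, the Schur complement of $M_\sigma$ onto the $V$-block converges to $L^{(D)}$.

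Step (B) is then immediate from the earlier structural results. By \cref{lemma:vertex-Dirichlet_eigenvectors} each of the $\nu(\psi)$ D-connected components $S_l$ carries the signed Dirichlet eigenvector $\psi|_{S_l}$ for the eigenvalue $\lambda_k$, and by \cref{lemma:vertex-first_eigvals_D-connected} this is the simple first Dirichlet eigenvalue of $S_l$, all higher Dirichlet eigenvalues of $S_l$ being strictly larger. Because distinct nodal domains meet only at ghost vertices, where admissible functions vanish, the Dirichlet form on $S^{(D)}$ block-diagonalizes over its D-connected components, so the Dirichlet spectrum is the disjoint union of the component spectra. Hence $\lambda_k$ is the minimal Dirichlet eigenvalue of $S^{(D)}$ and its multiplicity in $B_\infty$ is exactly the number of components, namely $\nu(\psi)$.

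Finally, for (C) I would compare endpoints using only monotonicity. At $\sigma=0$ the penalty vanishes and $B_0=L_{\psi,0}$ is block-diagonal with spectrum $\spec(L)\cup\{0^{(|V_{gh}|)}\}$; since $\lambda_k$ is simple and $\lambda_k>0$ (the case $k=1$ being vacuous, as then $E_\pm=\emptyset$), exactly $k+|V_{gh}|$ eigenvalues lie $\le\lambda_k$. At $\sigma=\infty$, part (B) shows exactly $\nu(\psi)$ eigenvalues are $\le\lambda_k$ and none lie strictly below. Since every branch is non-decreasing, any branch ending $\le\lambda_k$ started $\le\lambda_k$, so the number crossing from $\le\lambda_k$ to $>\lambda_k$ is $(k+|V_{gh}|)-\nu(\psi)=k-\nu(\psi)+|V_{gh}|$; \cref{lemma:vertex-lambda_k_crossings} ensures these crossings are transversal and that the branch through $\tilde\psi$ (\cref{lemma:vertex-lambda_k_constant}) is the unique one held constant at $\lambda_k$. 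Discarding the $|V_{gh}|$ crossings contributed by the ghost vertices' zero modes leaves precisely the nodal deficiency $\delta(\psi)=k-\nu(\psi)$, as claimed.
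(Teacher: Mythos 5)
Your proof is correct, and its skeleton (monotone eigenvalue branches, identification of the limiting spectrum with the Dirichlet spectrum of $S^{(D)}$, and an endpoint-counting argument) is the same as the paper's, which simply declares that the proof ``follows directly as in \cref{theorem:edge-nodal_thm}.'' The one substantive difference is your step (A): the paper never actually justifies the $\sigma\to\infty$ limit, which is the only point where the vertex-based theorem does not reduce verbatim to the edge-based one (there $\sigma=1$ is a finite endpoint and continuity suffices). Your argument --- the identity $B_\infty(u,u)-B_\sigma(u,u)=\tfrac{1}{1+\sigma}\sum_{E_\pm}w_{ij}(\sqrt{q_{ji}}u_i+\sqrt{q_{ij}}u_j)^2\ge 0$ on $D^*$ (valid since $q_{ij}q_{ji}=1$) giving $\lambda_m(\sigma)\le\lambda_m^{(D)}$ via Courant--Fischer, the $O(1/\sigma)$ decay of ghost components forcing subsequential eigenvector limits into $D^*$ and onto the Dirichlet problem, and $M_\sigma\ge\sigma\Pi_{V_{gh}}$ sending the remaining $|V_{gh}|$ branches to $+\infty$ --- is a correct and welcome filling of that gap. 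Steps (B) and (C) then match the paper's intended use of \cref{lemma:vertex-first_eigvals_D-connected,lemma:vertex-Dirichlet_eigenvectors,lemma:vertex-lambda_sigma_increasing,lemma:vertex-lambda_k_crossings}, and your endpoint count $(k+|V_{gh}|)-\nu(\psi)$ reproduces the stated crossing number.
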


	\begin{proof}
		This proof follows directly as in \cref{theorem:edge-nodal_thm}.
	\end{proof}

	One feature about our vertex-based flow is that the eigenvalue branches $\lambda_\sigma$ that converge to $\lambda_k$ almost always start at zero, namely $\lambda_{\sigma = 0} = 0$; the corresponding eigenvectors originate as indicator vectors for the ghost points. Of course if there are more nodal domains than ghost vertices, then some of the non-zero eigenvalues of $L$ will also converge to $\lambda_k$.
	
	This observation suggests that the topology of a graph $G$, and in particular the collection of sign-change edges $E_\pm$ for an eigenvector $\psi$, play an important role in determining the nodal domains of $\psi$.

	\begin{openprob}
		How do the sign-change edges contribute to the nodal domain counts? For each eigenvalue branch converging to $\lambda_*$, the corresponding eigenvector will converge to a linear combination of first Dirichlet eigenvectors for each D-connected domain of $G_{\psi}$: what do the eigenvectors tell us about the nodal domains, and how does the graph topology determine which sign-change edges give rise to eigenvectors of $L_{\psi,\infty}$?
	\end{openprob}

	\subsection{The relation to the edge-based flow}
	\label{section:vertex-based_flow-relation_to_edge-based}
	
	In this short subsection we relate the vertex-based construction to the edge-based construction of \Cref{section:edge-based_flow-construction}. 
	
	\begin{proposition}
		\label{prop:vertex_to_edge_flow}
		Suppose $\tilde{u}, \tilde{v}\colon G_{\psi,\sigma} \to \R$ are extensions of functions $u,v$ on $G$. Then 
		\begin{align*}
			B_\sigma(\tilde{u},\tilde{v}) 	&= \langle u, Lv\rangle + \sigma \sum_{(i,j)\in E_\pm} \frac{a_{ij}a_{ji}}{w_{ij}} \langle u, P_{ij} v\rangle .
		\end{align*} 
	\end{proposition}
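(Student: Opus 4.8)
The plan is to substitute the extension formula of \cref{def:vertex-extension} directly into the explicit expression for $B_\sigma$ recorded in \cref{def:vertex-bilinear_form}, and to show that the four groups of terms reorganize exactly into $\langle u, Lv\rangle$ plus the scaled edge-penalty term. First I would observe that the sum over $E\setminus E_\pm$ already coincides with the corresponding part of the Dirichlet form $\langle u, Lv\rangle = \sum_{(i,j)\in E} w_{ij}(u_i-u_j)(v_i-v_j)$, so the whole argument localizes to a single sign-change edge $(i,j)\in E_\pm$. For such an edge I would collect its three remaining contributions: the term $\frac{w_{ij}}{1+\sigma}(u_i-u_j)(v_i-v_j)$, the bracketed $\frac{\sigma}{1+\sigma}$ term involving $0_{ij}$, and the penalty $\sigma\,\tilde{u}_{0_{ij}}\tilde{v}_{0_{ij}}$ coming from $\sigma\langle u,v\rangle_{V_{gh}}$, and show they sum to $w_{ij}(u_i-u_j)(v_i-v_j) + \sigma\frac{a_{ij}a_{ji}}{w_{ij}}\langle u, P_{ij}v\rangle$.

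The computation rests on a handful of identities among $q_{ij}=-\psi_i/\psi_j$ and $a_{ij}=\frac{1}{1+q_{ij}}$, all consequences of $q_{ij}q_{ji}=1$. Namely I would record that $a_{ij}+a_{ji}=1$ and $a_{ij}=q_{ji}a_{ji}$ (equivalently $a_{ji}=q_{ij}a_{ij}$). From these one computes the differences to the ghost value, $u_i-\tilde{u}_{0_{ij}} = a_{ji}(u_i-u_j)$ and $u_j-\tilde{u}_{0_{ij}} = -a_{ij}(u_i-u_j)$, so that both reduce to scalar multiples of the single factor $(u_i-u_j)$. This is precisely where the specific choice of extension coefficient $a_{ij}=\frac{1}{1+q_{ij}}$ does the work, and reducing to this factor before combining terms is the cleanest route through the bookkeeping.

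With these in hand the bracketed $\frac{\sigma}{1+\sigma}$ term collapses, using $(1+q_{ji})a_{ji}^2=a_{ji}$, $(1+q_{ij})a_{ij}^2=a_{ij}$, and $a_{ij}+a_{ji}=1$, to $\frac{\sigma}{1+\sigma}w_{ij}(u_i-u_j)(v_i-v_j)$, which adds to the $\frac{1}{1+\sigma}$ term to give exactly $w_{ij}(u_i-u_j)(v_i-v_j)$: the $\sigma$-dependence cancels and we recover the missing $E_\pm$ portion of $\langle u, Lv\rangle$. Separately, expanding $\tilde{u}_{0_{ij}}\tilde{v}_{0_{ij}}=(a_{ij}u_i+a_{ji}u_j)(a_{ij}v_i+a_{ji}v_j)$ and comparing with $\frac{a_{ij}a_{ji}}{w_{ij}}\langle u,P_{ij}v\rangle = a_{ij}a_{ji}(q_{ji}u_iv_i + u_iv_j + u_jv_i + q_{ij}u_jv_j)$, the identities $a_{ij}=q_{ji}a_{ji}$ and $a_{ji}=q_{ij}a_{ij}$ match the diagonal coefficients while the off-diagonal coefficients agree automatically. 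Summing over all $(i,j)\in E_\pm$ then yields the stated formula. I do not expect a genuine obstacle here, as the identity is ultimately a finite algebraic verification; the only subtlety worth flagging is the cancellation that renders the $\sigma$-weighted ``Laplacian part'' independent of $\sigma$, which is exactly what guarantees that the vertex-based flow reproduces the edge-based flow up to the edgewise rescaling $\frac{a_{ij}a_{ji}}{w_{ij}}$.
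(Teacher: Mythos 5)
Your proposal is correct and follows essentially the same route as the paper's proof: substitute the extension $\tilde{u}_{0_{ij}}=a_{ij}u_i+a_{ji}u_j$ into the explicit form of $B_\sigma$, collapse the bracketed $\frac{\sigma}{1+\sigma}$ term via $(1+q_{ij})a_{ij}^2=a_{ij}$ and $a_{ij}+a_{ji}=1$ so that it recombines with the $\frac{1}{1+\sigma}$ term to restore the $E_\pm$ part of $\langle u,Lv\rangle$, and identify the ghost-vertex penalty with $\frac{a_{ij}a_{ji}}{w_{ij}}\langle u,P_{ij}v\rangle$ using $a_{ij}=q_{ji}a_{ji}$. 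The per-edge localization is a minor organizational difference only; all the identities you invoke check out.
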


	\begin{proof}
		For functions $\tilde{u}$ and $\tilde{v}$ that are extensions of functions $u, v$ on $G$, we have $$\tilde{u}_{0_{ij}} = a_{ij}u_i + a_{ji} u_j = \frac{1}{1+q_{ij}} u_i + \frac{1}{1+q_{ji}}u_j,$$ so the term $\sigma \sum_{i\in V_{gh}} \tilde{u}_i \tilde{v}_i$ of $B_\sigma (\tilde{u},\tilde{v})$ becomes 
		
		\begin{align*}
			&\sum_{(i,j)\in E_\pm} a_{ij}^2 u_i v_i + a_{ij} a_{ji} u_i v_j  + a_{ij}a_{ji} u_j v_i + a_{ji}^2 u_j v_j\\
			=& \sum_{(i,j)\in E_\pm} a_{ij}a_{ji} (\sqrt{q_{ji}}u_i + \sqrt{q_{ji}}u_j)(\sqrt{q_{ji}}v_i + \sqrt{q_{ji}}v_j)\\
			=&\sum_{(i,j)\in E_\pm} u^T p_{ij} v.
		\end{align*}
		
		Here $p_{ij}$ is the matrix with zeros except at the $i,j$ submatrix, taking the form
		
		\begin{align*}
			p_{ij} 	&= \pmat{a_{ij}^2 & a_{ij}a_{ji}\\ a_{ij}a_{ji} & a_{ji}^2 } = a_{ij}a_{ji} \pmat{q_{ji} & 1 \\ 1 & q_{ij}} = \frac{a_{ij}a_{ji}}{w_{ij}} P_{ij}.
		\end{align*}
		
		We also see that 
		
		\begin{align*}
			&\sum_{(i,j) \in E_\pm} w_{ij} \frac{\sigma}{1+\sigma} \left[ (1+q_{ji})(u_i - u_{0_{ij}})(v_i - v_{0_{ij}}) + (1+q_{ij})(u_j - u_{0_{ij}})(v_j - v_{0_{ij}}) \right]\\
			=& \sum_{(i,j) \in E_\pm} w_{ij} \frac{\sigma}{1+\sigma} \left[ (1+q_{ji}) ((1-a_{ij})u_i - a_{ji} u_j)((1-a_{ij})v_i - a_{ji} v_j) \right.\\
			&+ \left.(1+q_{ij}) ((1-a_{ji})u_j - a_{ij}u_i)((1-a_{ji})v_j - a_{ij}v_i) \right].\\
			=& \sum_{(i,j)\in E_\pm} w_{ij} \frac{\sigma}{1+\sigma} \left[ a_{ji}(u_i - u_j)(v_i - v_j) + a_{ij} (u_j - u_i)(v_j - v_i)  \right]\\
			=& \sum_{(i,j)\in E_\pm} w_{ij} \frac{\sigma}{1+\sigma} (u_i - u_j)(v_i - v_j),
		\end{align*}
		since $a_{ij} + a_{ji} = 1$ and $\frac{a_{ij}}{1+q_{ij}} = 1$. We conclude
		
		\begin{align*}
			B_\sigma(u,v) 	=& \sum_{(i,j)\in E\setminus E_\pm} w_{ij}(u_i - u_j)(v_i-v_j) + \sum_{(i,j)\in E_\pm } w_{ij} \frac{1}{1+\sigma} (u_i - u_j)(v_i - v_j)\\
			&+ \sum_{(i,j) \in E_\pm} w_{ij} \frac{\sigma}{1+\sigma} \big[ (1+q_{ji})(u_i - u_{0_{ij}})(v_i - v_{0_{ij}}) \\
			& \qquad\qquad\qquad\qquad + (1+q_{ij})(u_j - u_{0_{ij}})(v_j - v_{0_{ij}}) \big] + \sigma \sum_{i\in V_{gh}} u_i v_i\\
			=& \sum_{(i,j)\in E\setminus E_\pm} w_{ij}(u_i - u_j)(v_i-v_j) + \sum_{(i,j)\in E_\pm } w_{ij} \frac{1}{1+\sigma} (u_i - u_j)(v_i - v_j)\\
			&+ \sum_{(i,j)\in E_\pm} w_{ij} \frac{\sigma}{1+\sigma} (u_i - u_j)(v_i - v_j) + \sigma \sum_{i\in V_{gh}} u_i v_i\\
			=& \sum_{(i,j)\in E} w_{ij}(u_i - u_j)(v_i-v_j)\\
			&+ \sigma \sum_{(i,j)\in E_\pm} a_{ij}a_{ji} (\sqrt{q_{ji}}u_i + \sqrt{q_{ji}}u_j)(\sqrt{q_{ji}}v_i + \sqrt{q_{ji}}v_j)\\
			=& \langle u, Lv\rangle + \sigma \sum_{(i,j)\in E_\pm} \frac{a_{ij}a_{ji}}{w_{ij}} \langle u, P_{ij} v\rangle.
		\end{align*}
	\end{proof}
	
	The constant in front of each $\langle u,P_{ij}v\rangle$ determines when effective Dirichlet boundary conditions are imposed across edges $(i,j)\in E_\pm$, so in general we can consider the bilinear form $\langle u, Lv\rangle + \sigma \sum_{(i,j)\in E_\pm} c_{ij} \langle u, P_{ij} v\rangle$. For the choice $c_{ij} = w_{ij}$, we see that when $\sigma=1$ the Laplacian $L_1$ indicates each edge $(i,j)\in E_\pm$ is no longer present, which is where the Dirichlet boundary conditions come from.
	
	This version of the vertex-based bilinear form requires that $\tilde{u}$ and $\tilde{v}$ are extensions of vectors $u$ and $v$, which in general may not be the case for eigenvectors of $L_{\psi,\sigma}$. Nonetheless, as $\sigma\to\infty$ the vertex-based flow forces $u|_{V_{gh}} = v|_{V_{gh}} = 0$. This leads to $0 = u_{0_{ij}} = \frac{1}{1+q_{ij}} u_i + \frac{1}{1+q_{ji}}u_j$, and so $u_i = -\frac{1+q_{ij}}{1+q_{ji}} u_j = \frac{\psi_i}{\psi_j}u_j$ as in \cref{lemma:edge-dirichlet_eigvecs}.

	\section{Examples}
	\label{section:examples}
	
	In this section we provide some examples of both the subdivision process and spectral flow for some common types of graphs. For some of these graphs we can explicitly state what the spectrum is, and we state these without proof; see \cite{BrHa:SpecBook} for details.
	
	\subsection{Complete graphs}
	\label{section:examples-complete}
	
	For a complete graph on $n$ vertices, denoted $K_n$, we label the vertices $\{1, 2, ..., n\}$ and add in all edges $(i,j)$, $1\leq i < j \leq n$. The spectrum of the graph Laplacian is $\{0, n,..., n\}$, with $n$ repeated $n-1$ times, and the (complex valued) eigenvectors are $(1,\xi, \xi^2, ..., \xi^{n-1})$ for roots of unity $\xi^n = 1$, both facts due to the graph Laplacian being circulant; see any text on matrix analysis, such as \cite[Chapter 12]{Be:MatrixBook}, for details.
	
	In \cref{fig:K5eigs}, we display the eigenvectors and spectral flows corresponding to $\lambda_2$ and $\lambda_3$. The top row shows the second Laplace eigenvector for $K_5$, followed by the edge-based and vertex-based flows. The first plot shows the eigenvector's values on each vertex. The next two plots show the spectral flow for the edge-based and vertex-based flows, respectively. We show all eigenvalue branches for the sake of illustration, but of particular note is the fact that only two of the branches converge to $5$, and the rest quickly diverge from the line $\lambda_k = 5$. For the vertex-based flow, we have eigenvalue branches corresponding to the original vertices as well as the ghost vertices; one of the two branches that limits to $5$ originated as a zero eigenvalue, corresponding to one of the ghost vertices.
	
	%
	%
	
	
	\begin{figure}
		\centering
		\includegraphics[width=0.35\textwidth]{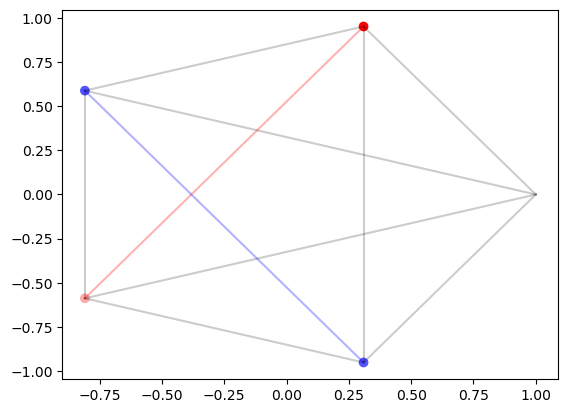}
		\includegraphics[width=0.3\textwidth]{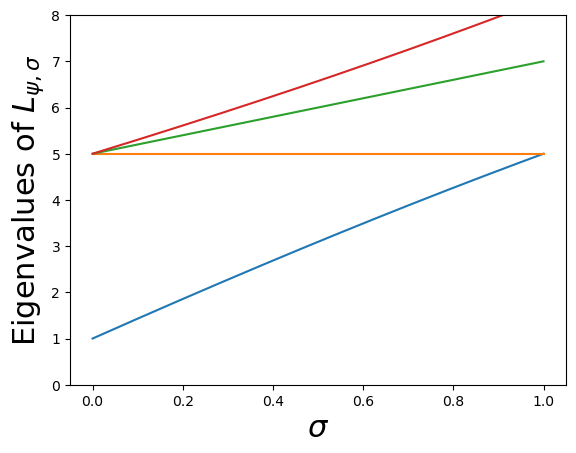}
		\includegraphics[width=0.3\textwidth]{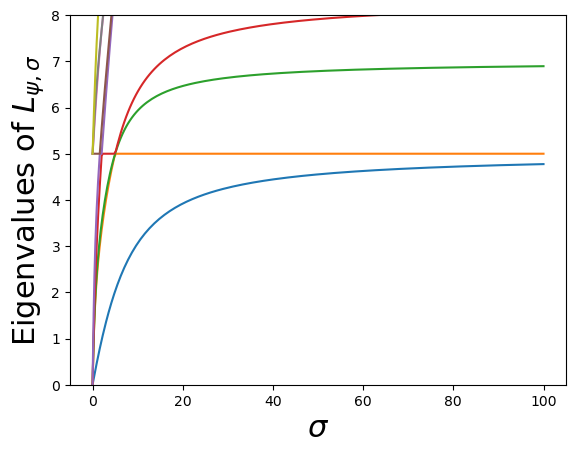}
		
		\includegraphics[width=0.35\textwidth]{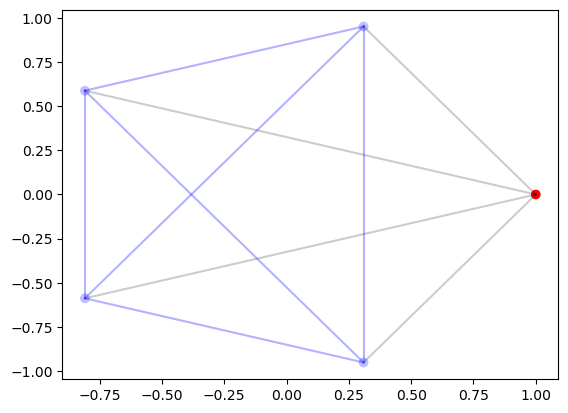}
		\includegraphics[width=0.3\textwidth]{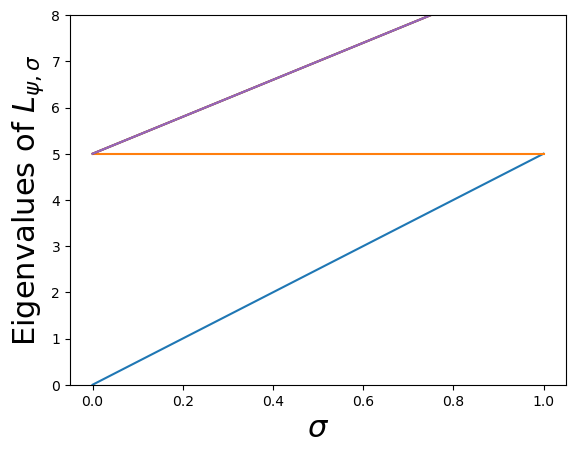}
		\includegraphics[width=0.3\textwidth]{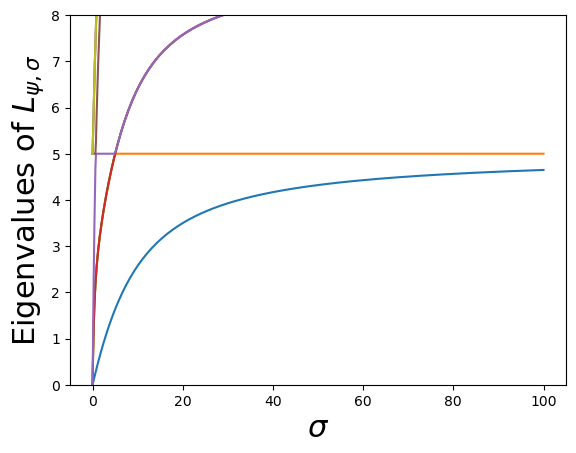}
		
		\caption{The second (top row) and third (bottom row) eigenvector of the graph Laplacian for $K_5$, along with their edge-based (middle column) and vertex-based (right column) spectral flows. In the edge-based flow for the third eigenvector (middle bottom), only three eigenvalue branches appear; the other two are hidden by the eigenvalue branch above $\lambda_3$.}
		\label{fig:K5eigs}
	\end{figure}
	
	\subsection{Cyclic graphs}
	\label{section:examples-cyclic}
	
	The cyclic graph on $n$ vertices, denoted $C_n$, has vertices $\{1, 2, ..., n\}$, and edges $(i,i+1)$ for $1\leq i < n$, and $(n,1)$. The spectrum of $C_n$ is $\{2 - 2\cos(\frac{2\pi j}{n}) \}_{j=0}^{n-1}$. Accordingly, each eigenvalue has multiplicity 2.
	
	In \cref{fig:C5eig1}, we show the second Laplace eigenvector for $C_5$. In the edge-based flow plot (middle), we show the flow of all five eigenvalues branches for $C_5$, as well as the vertex-based flow (right). Examining the function plot suggests this eigenvector has 2 nodal domains, which is verified in the spectral flows via 2 eigenvalue branches converging to the second eigenvalue.
	
	\begin{figure}
		\centering
		\includegraphics[width=0.35\textwidth]{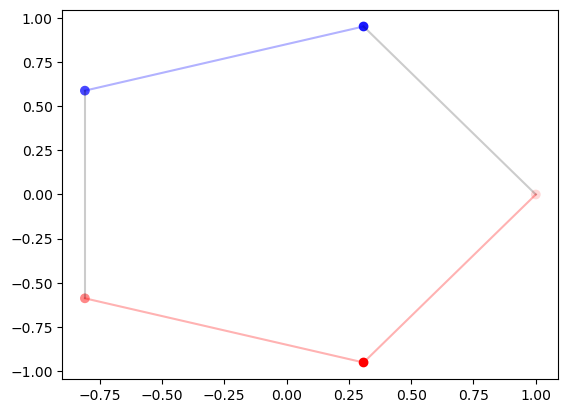}
		\includegraphics[width=0.3\textwidth]{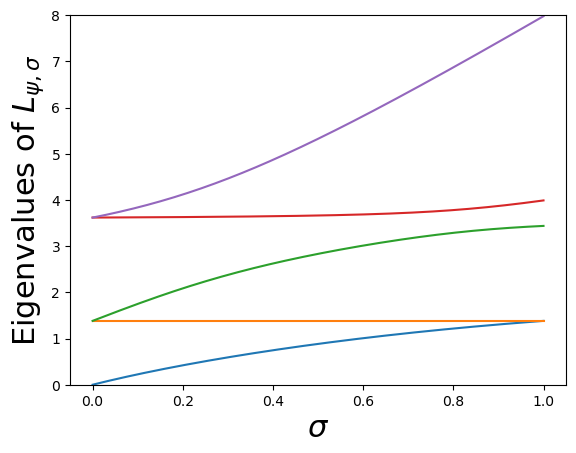}
		\includegraphics[width=0.3\textwidth]{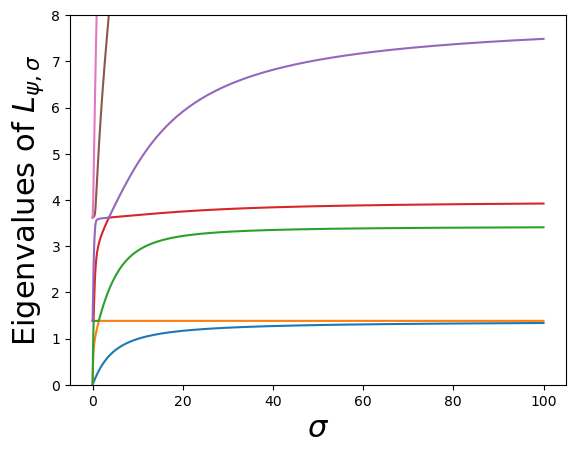}
		
		\caption{The edge-based (middle) and vertex-based (right) spectral flows for an eigenvector of the graph Laplacian for $C_5$ (left).}
		\label{fig:C5eig1}
	\end{figure}
	
	In \cref{fig:graph_nodal_defs} (top left), we consider the cyclic graph on 31 vertices $C_{31}$ and compute the number of nodal domains each eigenvector has. The dots correspond to pairs $(k, \nu(\phi(k))$, and dots are connected with a solid black line if the corresponding eigenvalues are the same. Since the eigenvectors all have the form $(1, \xi, \xi^2, ...,\xi^{30})$ with $\xi^{31} = 1$, taking the real and imaginary parts will produce two real valued eigenvectors with the same number of nodal domains. This is verifed in the scatter plot of nodal domains, where pairs of dots are connected by horizontal black lines.
	
	\begin{figure}
		\centering
		\includegraphics[width=0.3\textwidth]{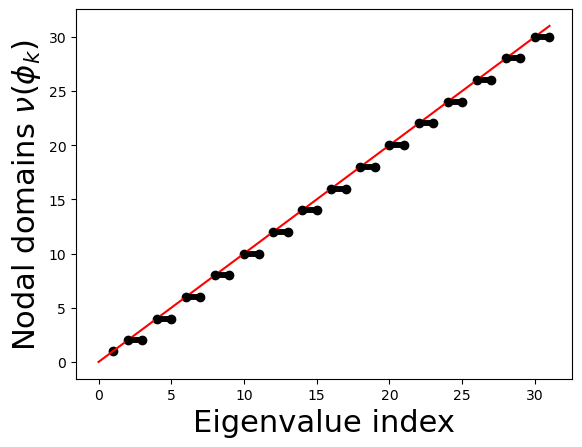}
		\includegraphics[width = 0.3\textwidth]{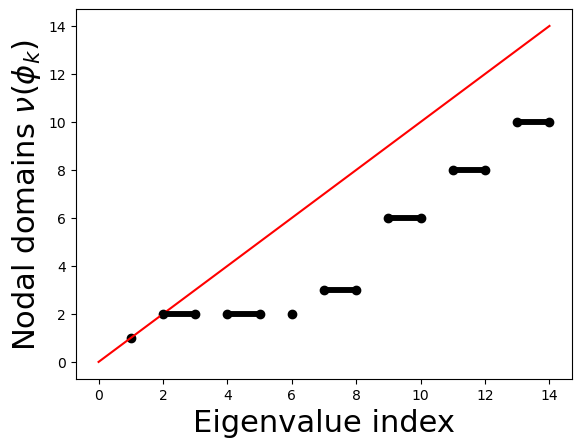}
		
		\includegraphics[width=0.3\textwidth]{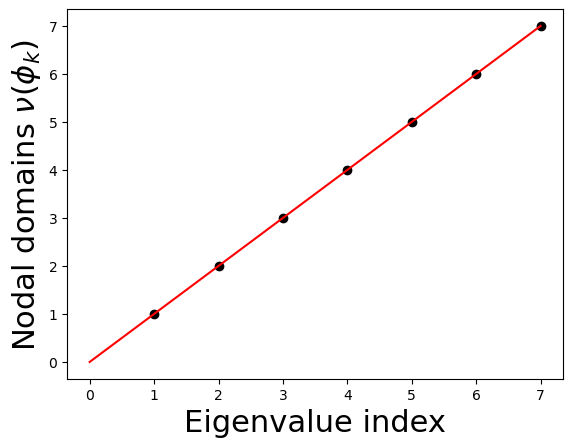}
		\includegraphics[width=0.3\textwidth]{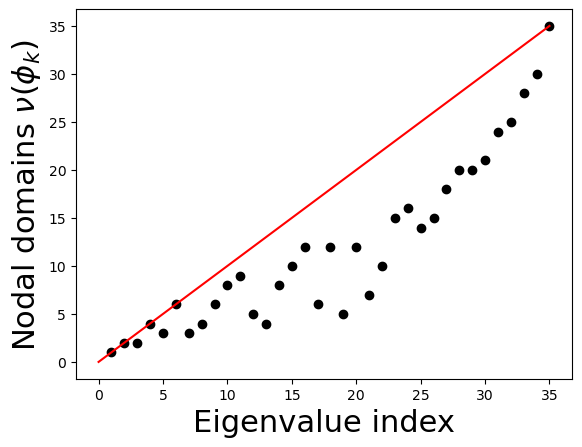}
		
		\caption{For the graphs $C_{31}$ (top left), $GP(7,3)$ (top right), $I_7$ (bottom left), $I_{7,5}$ (bottom right), plotted points correspond to pairs $(k, \nu(\phi_k))$ for eigenvectors $\phi_k$ of the graph's Laplacian, and black dots are connected by a line if they correspond to the same eigenvalue. The red line is the curve $y=x$; an eigenvector's nodal deficiency is the vertical distance between the corresponding dot and the red line.}
		\label{fig:graph_nodal_defs}
	\end{figure}
	
	\subsection{Petersen graphs}
	\label{section:examples-petersen}
	
	A generalized Petersen graph $GP(n,m)$ for $n\geq 3$ and $1\leq m \leq \lfloor \frac{n-1}{2} \rfloor$ consists of $2n$ vertices $\{a_0, ..., a_{n-1}, b_0, ..., b_{n-1}\}$, with edges of the form $(a_i, a_{i+1}), (a_i,b_i),$ and $(b_i, b_{i+m})$ for $0\leq i \leq n-1$, where the sums are considered modulo $n$. Some basic properties of these graphs are described in \cite{GeSt:SpectraGPetersen}.
	
	In \cref{fig:Petersen_flows}, we show the edge-based and vertex-based spectral flows for the 7th and 8th eigenvectors of $GP(7,3)$. Examining the two plots of the eigenvectors (top row), we count 3 nodal domains for each. However, both the edge-based (middle column) and vertex-based (right column) spectral flows seem to suggest that there should be 4 nodal domains, since 4 eigenvalue branches converge to $\lambda_* \approx 2.915$. When we zoom in to the edge-based spectral flow of the 7th eigenvector near $\sigma = 1$ for the edge-based flow and $\sigma = 600$ for the vertex-based flow (\cref{fig:Petersen_avoided_crossing}), we see that a crossing does in fact occur, meaning the final nodal domain count is actually 3. This example suggests that the interplay between the numerics and analysis of the spectral flow is more subtle than we might expect, since crossings in the edge-flow may occur close to the limit $\sigma = 1$ and converge to a value close to $\lambda_*$. Also note that in the vertex-based flows, only eigenvalue branches coming from ghost points converge to $\lambda_*$ from below; all other eigenvalue branches, especially those from $\spec(L)$, cross $\lambda_*$. In general, we have that if $|E_{\pm}| > |V|$, then all of the limiting eigenvalues originate from ghost vertices. Otherwise, some of the limiting eigenvalues may be branches from eigenvalues of $L$, depending on the nodal count and $|E_{\pm}|$.
	
	Finally, \cref{fig:graph_nodal_defs} (top right) displays the nodal domain counts for each eigenvector of $GP(7,3)$. Note that eigenvectors 7 and 8 have 3 nodal domains each, as verified by \cref{fig:Petersen_flows}.
	
	\begin{figure}
		\centering
		\includegraphics[width = 0.35\textwidth]{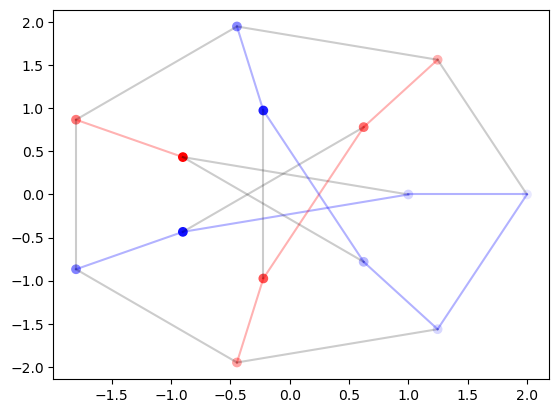}
		\includegraphics[width = 0.3\textwidth]{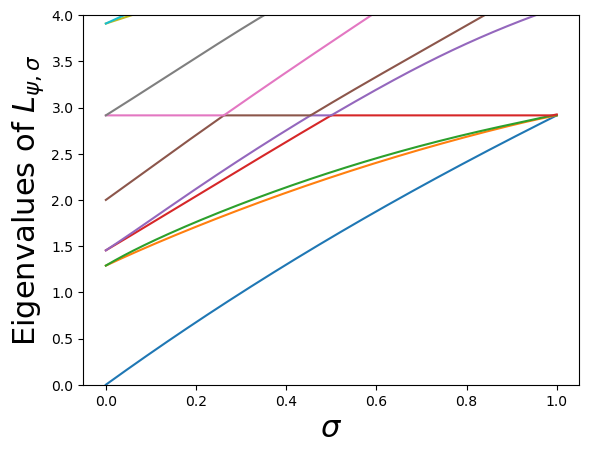}
		\includegraphics[width = 0.3\textwidth]{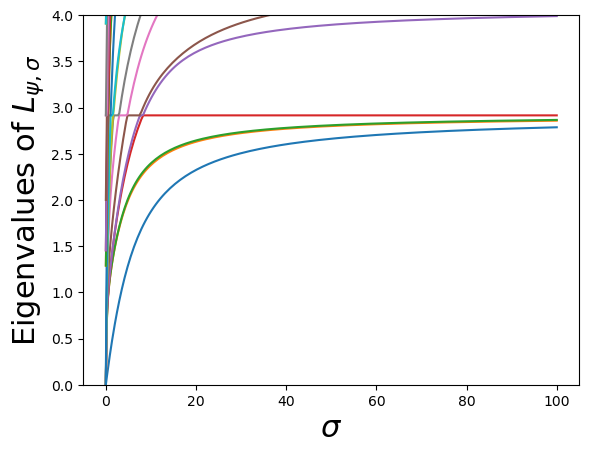}
		
		\includegraphics[width = 0.35\textwidth]{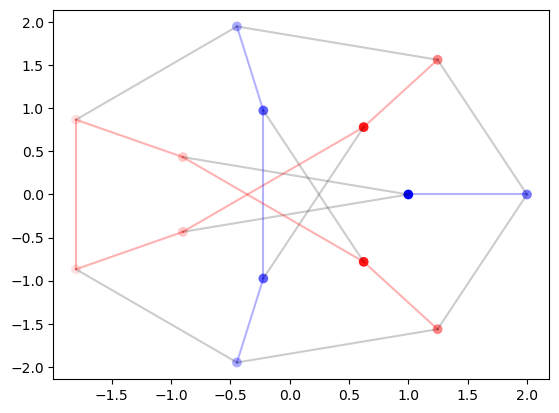}
		\includegraphics[width = 0.3\textwidth]{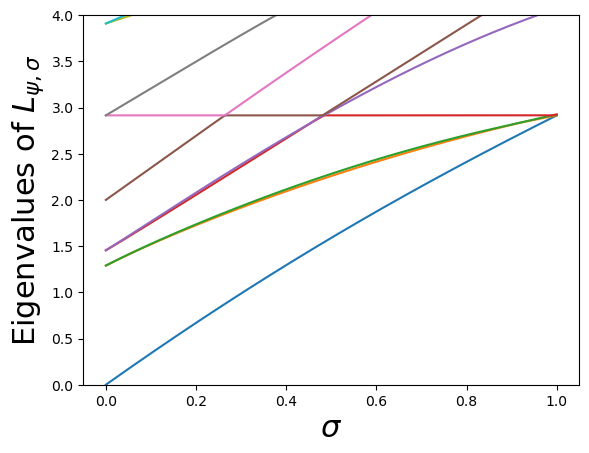}
		\includegraphics[width = 0.3\textwidth]{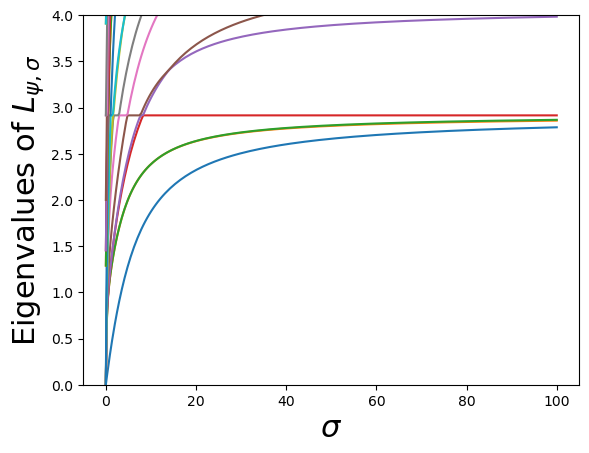}
		
		\caption{The 7th (top left) and 8th (bottom left) eigenvectors for the graph Laplacian of $GP(7,3)$, with their edge-based (middle column) and vertex-based (right column) flows. Note that, by \cref{fig:graph_nodal_defs} (top right), the corresponding eigenvalues are equal.}
		\label{fig:Petersen_flows}
	\end{figure}
	
	\begin{figure}
		\centering
		\includegraphics[width = 0.3\textwidth]{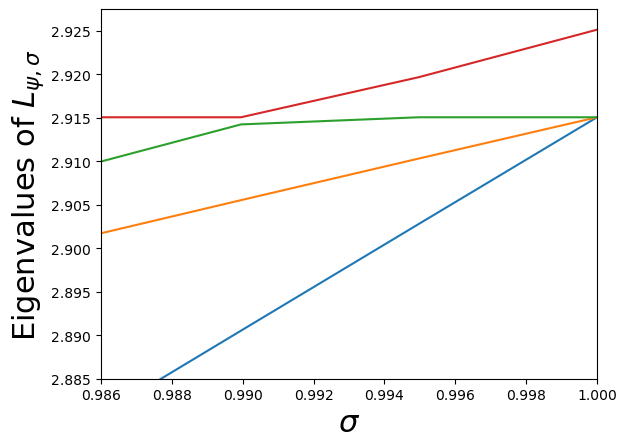}
		\includegraphics[width = 0.3\textwidth]{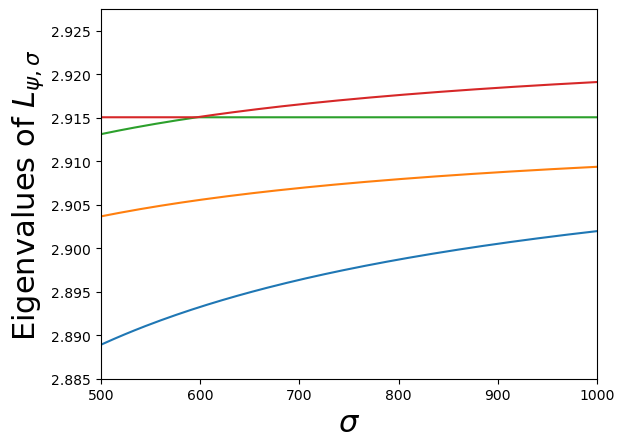}
		
		\caption{The crossing in the edge-based and vertex-based flow of the 7th eigenvector of $GP(7,3)$. In the edge-based flow the crossing occurs near $\sigma = 0.990$, while in the vertex-based flow the crossing occurs near $\sigma = 600$. The edge-based flow indicates the final nodal count is 3, not 4 as when examined from afar.}
		\label{fig:Petersen_avoided_crossing}
	\end{figure}
	
	\subsection{$1$- and $2$-d intervals}
	\label{section:examples-intervals}
	
	In this section we consider interval graphs $I_n$, and graph analogues of rectangles $I_{n,m}$. The vertices of $I_n$ are $\{1,2,...,n\}$, and the edges are $(i,i+1)$ for $1\leq i \leq n-1$. For $I_{n,m}$, we have vertices $\{v_{1,1}, ..., v_{1,m},v_{2,1}, ..., v_{n,m}\},$ and edges of the form $(v_{i,j},v_{i+1,j})$ and $(v_{i,j}, v_{i,j+1})$ for all possible $i$ and $j$. 
	
	The spectrum of $I_n$ is well-known, and is $\{2 - 2\cos(\frac{j \pi}{n})\}_{j=0}^{n-1}$; a simple argument involving a ``doubled'' interval and the spectrum of $C_{2n}$ is given in \cite{BrHa:SpecBook}. In \cref{fig:graph_nodal_defs} we show the nodal domain count of $I_7$ (bottom left), and in \cref{ fig:I_7_eig3_specflow} we show the spectral flow for the third eigenvector of $I_7$. Note that, as suggested by the continuum Sturm-Liouville theory, the eigenvectors of $I_7$ have zero nodal deficiency, since three eigenvalue branches converge to $\lambda_3$. In the vertex-based flow, the two eigenvalue branches converging to $\lambda_3$ come from ghost points, and all other eigenvalue branches cross $\lambda_3$.
	
	For the spectrum of $I_{n,m}$, we can take two eigenvectors $\phi_k, \psi_j$ of $I_k, I_j$, with corresponding eigenvalues $\lambda_k,\lambda'_j$, and define a Laplace eigenvector $\phi_k \otimes \psi_j$ on $I_{n,m}$ with eigenvalue $\lambda_k\lambda_j'$. These new eigenvectors are orthogonal to each other, and there are $nm$ of them, so we explicitly construct the eigenspaces of $I_{n,m}$'s Laplacian; that we also get the corresponding eigenvalues is a bonus. Since the eigenvectors of $I_{n,m}$ are all possible (outer) products of eigenvectors of $I_n$ and $I_m$, we cannot expect that the nodal deficiency is always zero. \Cref{fig:graph_nodal_defs} (bottom right) confirms this, which displays the number of nodal domains for each eigenvector of $I_{7,5}$. In \cref{fig:I_7_5_eig5_spec_flow}, we display a 3D visualization of the fifth eigenvector of $I_{7,5}$, together with its spectral flows zoomed in near $\lambda_5$. In each case 3 eigenvalue branches converge to $\lambda_5$, verifying what is observable from the eigenvector plot.
	
	\begin{figure}
		\centering
		\includegraphics[width=0.35\textwidth]{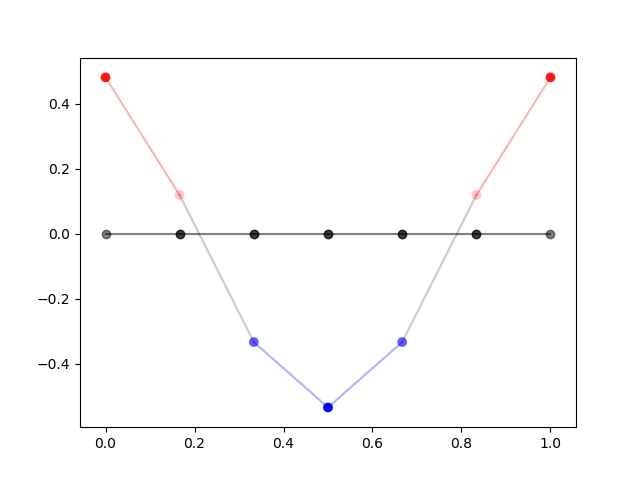}
		\includegraphics[width=0.3\textwidth]{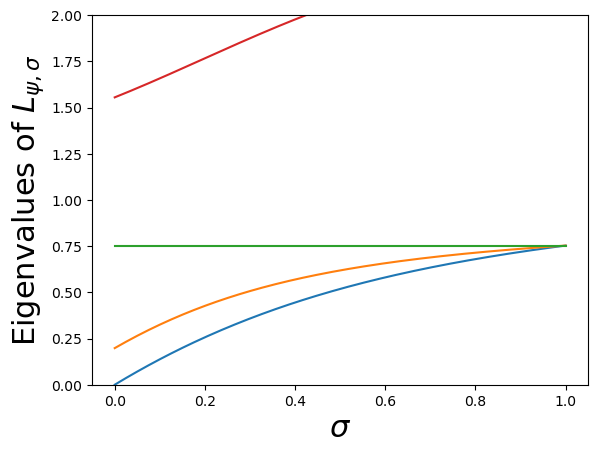}
		\includegraphics[width=0.3\textwidth]{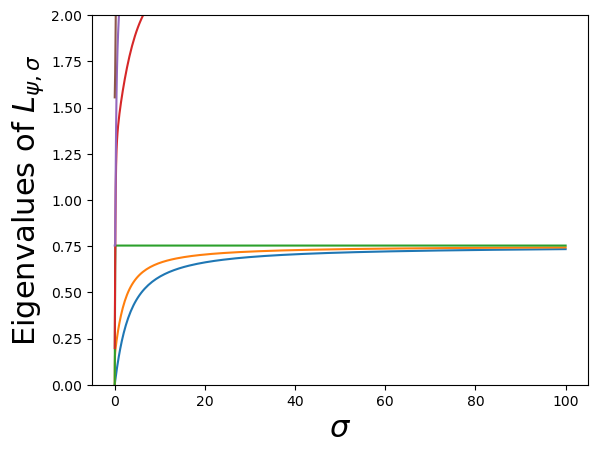}
		
		\caption{The spectral flow for the third eigenvector of the interval graph $I_7$. We display the eigenvector (left), along with its edge-based (middle) and vertex-based (right) spectral flows. The eigenvector graph shows three nodal domains, and each of the spectral flows have three eigenvalue branches converging to $\lambda_3 = 2 (1-\cos(\frac{2\pi}{7}))\approx 0.753$.} 
		\label{fig:I_7_eig3_specflow}
	\end{figure}
	
	\begin{figure}
		\centering
		\includegraphics[width=0.35\textwidth]{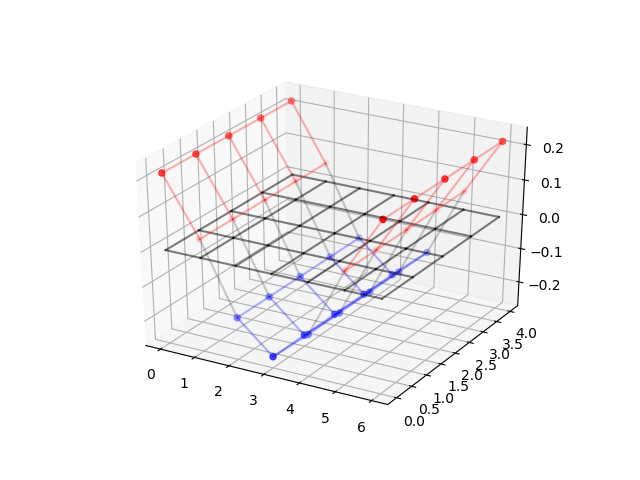}
		\includegraphics[width=0.3\textwidth]{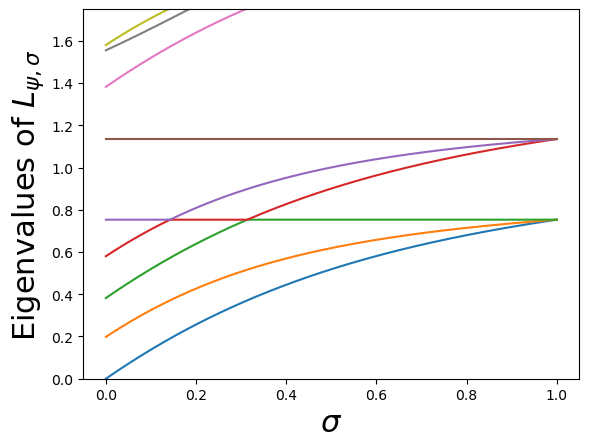}
		\includegraphics[width=0.3\textwidth]{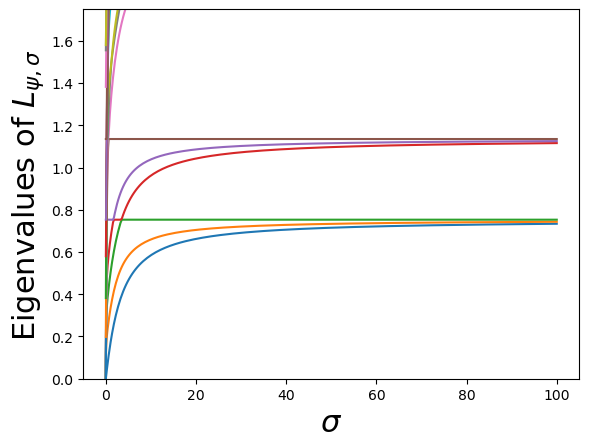}
		
		\caption{The spectral flow for the fifth eigenvector of the interval graph $I_{7,5}$. The eigenvector is displayed (left) along with the edge-based (middle) and vertex-based (right) spectral flows. This eigenvector has three nodal domains, and both the edge- and vertex-based flows have $3$ eigenvalues less than or equal to $\lambda_5$ in the limit.} 
		\label{fig:I_7_5_eig5_spec_flow}
	\end{figure}
	
	\subsection{Erd\H{o}s-R\'{e}nyi random graphs}
	\label{section:examples-random}
	
	In this subsection we explore some numerical examples involving Erd\H{o}s-R\'{e}nyi graphs, in which each edge between vertices is chosen with probability $p$; these graphs are often denoted $G(n,p)$ where $n$ is the number of vertices. We sampled a graph $G(20,p)$ for $p=0.1, 0.3, 0.5, 0.7, 0.9, 0.95$, and the number of nodal domains for each sample's eigenvectors were computed. \Cref{fig:ER_graphs} shows these samples with their third eigenvector plotted (first row, third row), along with the scatter plots showing the corresponding nodal domain counts for all eigenvectors (second row, fourth row). The nodal domain counts suggest that for smaller edge probabilities the random graphs more closely resemble intervals, whereas for higher edge probabilities the random graphs more closely resemble complete graphs.
	
	\begin{figure}
		\centering
		\includegraphics[width=0.3\textwidth]{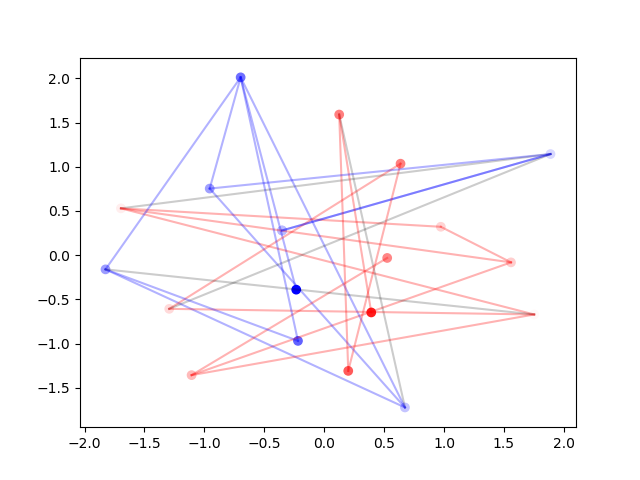}
		\includegraphics[width=0.3\textwidth]{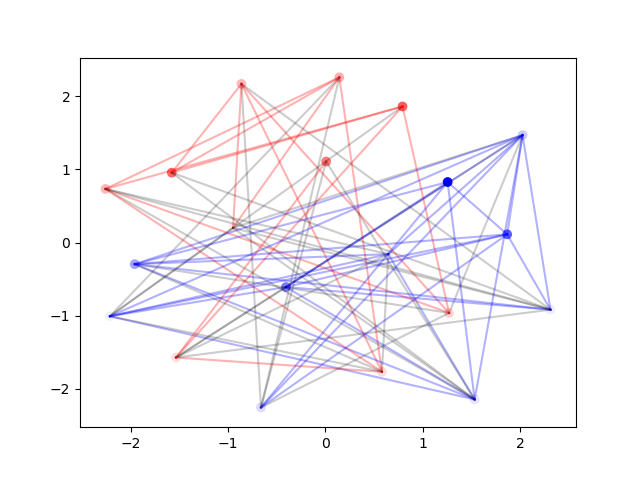}
		\includegraphics[width=0.3\textwidth]{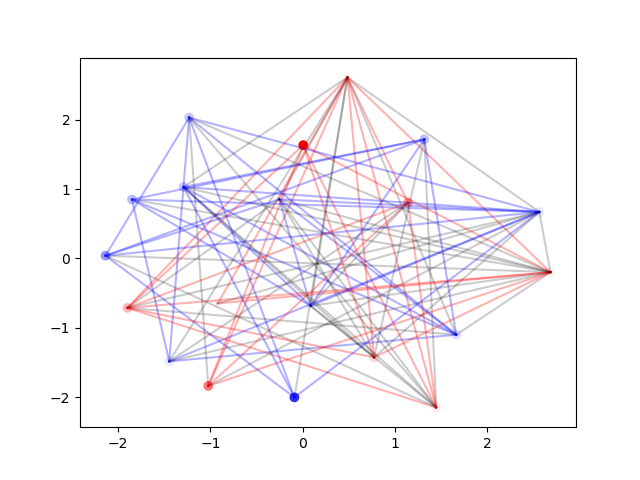}
		
		\includegraphics[width=0.3\textwidth]{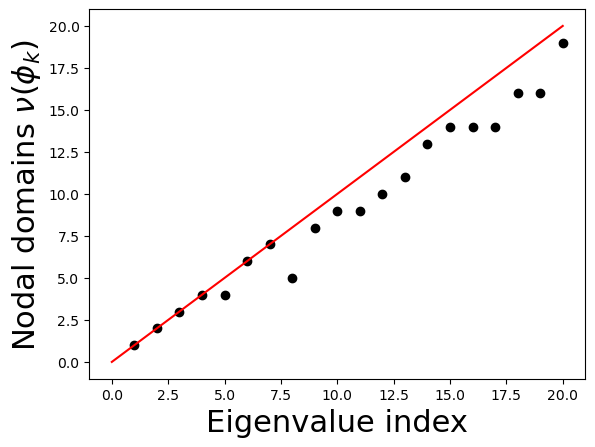}
		\includegraphics[width=0.3\textwidth]{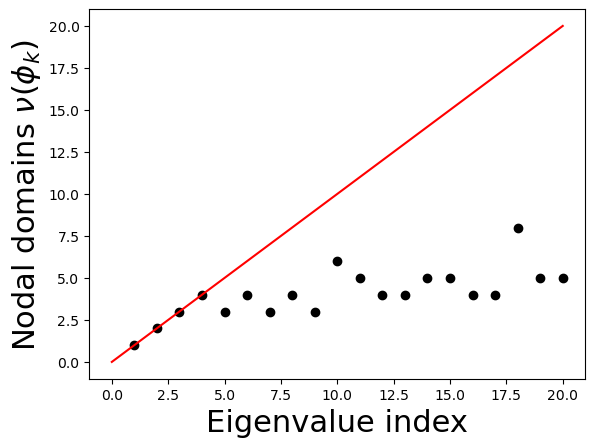}
		\includegraphics[width=0.3\textwidth]{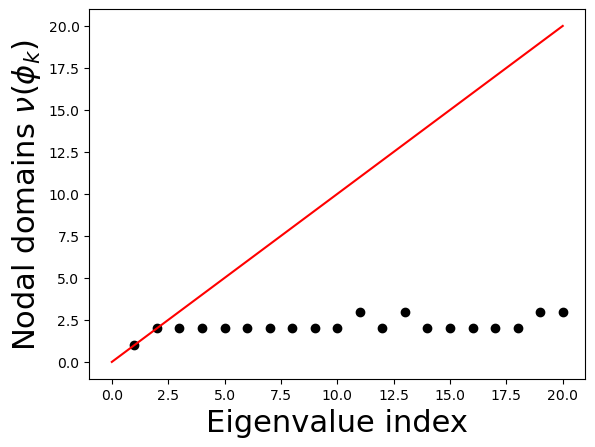}

		\includegraphics[width=0.3\textwidth]{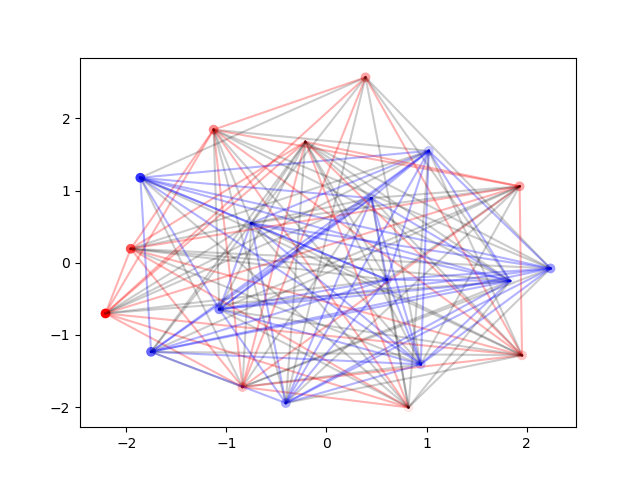}
		\includegraphics[width=0.3\textwidth]{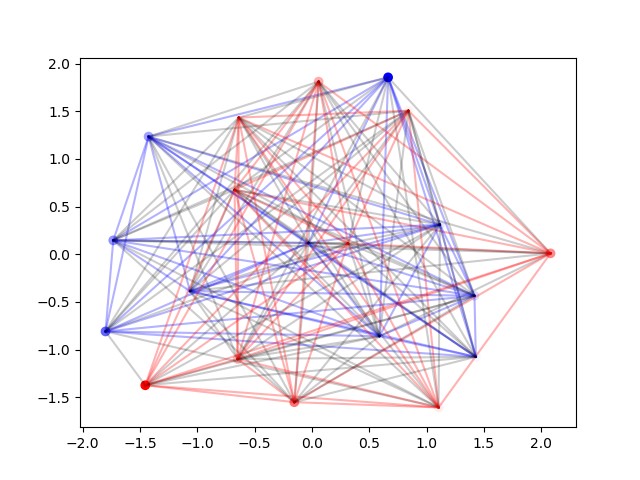}
		\includegraphics[width=0.3\textwidth]{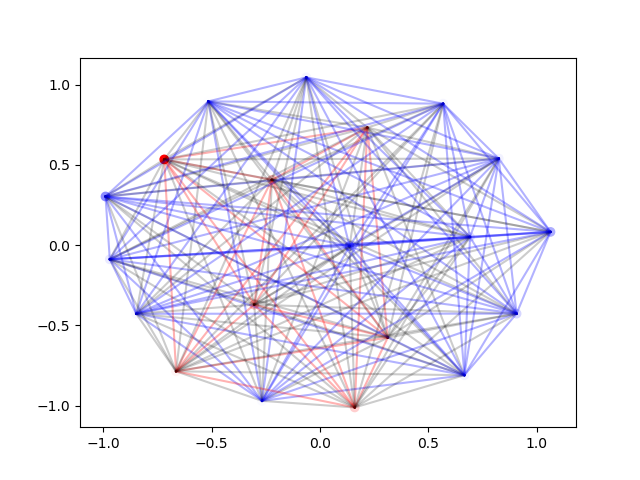}

		\includegraphics[width=0.3\textwidth]{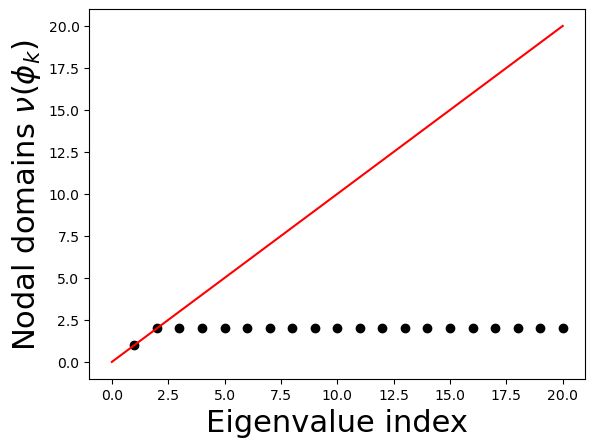}
		\includegraphics[width=0.3\textwidth]{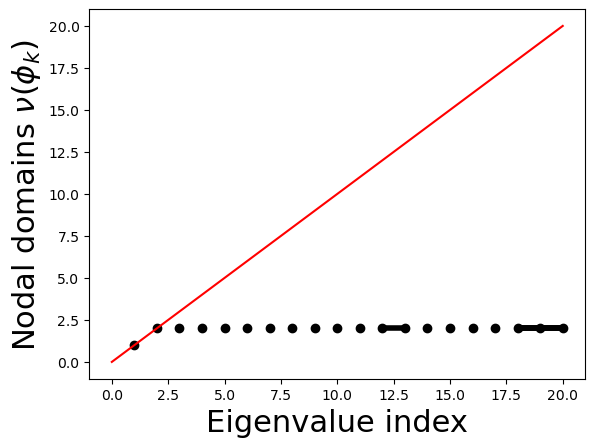}
		\includegraphics[width=0.3\textwidth]{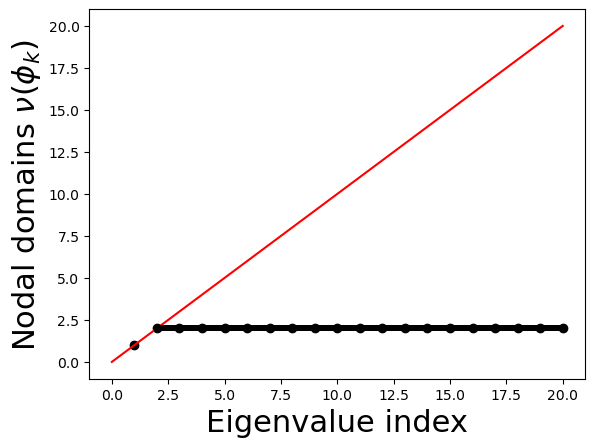}
		
		\caption{Various Erd\H{o}s-R\'{e}nyi random graphs on 20 vertices with their nodal domain counts; the graphs correspond to edge probabilities (left to right) $p=0.1,0.3,0.5$ in the top two rows and $p=0.7,0.9,0.95$ in the bottom two rows. As the probability that an edge connects vertices increases, the spectral flow is able to detect that they are closer to being a complete graph than an interval.}
		\label{fig:ER_graphs}
	\end{figure}

	\bibliography{graph_spec_flow}

\end{document}